\newcommand{\po}{\ar@{}[dr]|{\text{\pigpenfont R}}}
\newcommand{\pb}{\ar@{}[dr]|{\text{\pigpenfont J}}}
\numberwithin{equation}{section}
\numberwithin{figure}{section}
\theoremstyle{plain}
\newtheorem{thm}{Theorem}[section]
\newtheorem{lem}[thm]{Lemma}
\newtheorem{prop}[thm]{Proposition}
\newtheorem{cor}[thm]{Corollary}
\newtheorem{claim}[thm]{Claim}
\newtheorem{que}[thm]{Problem}
\theoremstyle{definition}
\newtheorem{defn}[thm]{Definition}
\newtheorem{remark}[thm]{Remark}
\title{The universal surface bundle over the Torelli space has no sections}
\author{Lei Chen}
\begin{document}
 \bibliographystyle{alpha}
\maketitle

\begin{abstract}
For $g>3$, we give two proofs of the fact that the \emph{Birman exact sequence} for the Torelli group
\[
1\to \pi_1(S_g)\to {\cal I}_{g,1}\to {\cal I}_g\to 1
\] does not split. This result was claimed by G. Mess in \cite{mess1990unit}, but his proof has a critical and unrepairable error which will be discussed in the introduction. Let ${\cal UI}_{g,n}\xrightarrow{Tu'_{g,n}} {\cal BI}_{g,n}$ (resp. ${\cal UPI}_{g,n}\xrightarrow{Tu_{g,n}}{\cal BPI}_{g,n}$)  denote the universal surface bundle over the Torelli space fixing $n$ points as a set (resp. pointwise). We also deduce that $Tu'_{g,n}$ has no sections when $n>1$ and that $Tu_{g,n}$ has precisely $n$ distinct sections for $n\ge 0$ up to homotopy.
\end{abstract}

\section{Introduction}

It is a basic problem to understand when bundles have continuous sections, and the corresponding group theory problem as to when short exact sequences have splittings. These are equivalent problems when the fiber, the base and the total space are all $K(\pi,1)$-spaces. In this article, we will discuss the ``section problems" and the ``splitting problems" in the setting of surface bundles. Here by \emph{section} we mean continuous section.

Let $S_{g,n}$ be a closed orientable surface of genus $g$ with $n$ punctures. Let Mod$_{g,n}$ (resp. PMod$_{g,n}$) be the \emph{mapping class group} of $S_{g,n}$, i.e. the group of isotopy classes of orientation-preserving diffeomorphisms of $S_g$ fixing n points as a set (resp. pointwise). Mod$_{g,n}$ and PMod$_{g,n}$ act on $H^1(S_g;\mathbb{Z})$ leaving invariant the algebraic intersection numbers. Let ${\cal I}_{g,n}$ (resp. ${\cal PI}_{g,n}$) be the \emph{Torelli group} (resp. \emph{pure Torelli group}) of $S_{g,n}$, i.e. the subgroup of Mod$_{g,n}$ (resp. PMod$_{g,n}$) that acts trivially on $H^1(S_g;\mathbb{Z})$. We omit $n$ when $n=0$. The following \emph{Birman exact sequence} for the Torelli group provides a relationship between ${\cal I}_{g,1}$ and ${\cal I}_{g}$; see \cite[Chapter 4.2]{BensonMargalit}.
\begin{equation}
1\to \pi_1(S_g)\xrightarrow{\text{point pushing}} {\cal I}_{g,1}\xrightarrow{T\pi_{g,1}} {\cal I}_{g}\to 1.
\label{*}
\end{equation}

The main theorem of this paper is the following:
\begin{thm}[\bf Nonsplitting of the Birman exact sequence for the Torelli group]
For $g>3$, the Birman exact sequence for the Torelli group (\ref{*}) does not split.
\label{main}
\end{thm}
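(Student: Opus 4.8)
The plan is to recast the splitting of (\ref{*}) as a concrete lifting problem, observe that the naive first-order (Johnson) invariants are powerless, and then locate the obstruction one step deeper in the nilpotent tower of $\pi_1(S_g)$, where the required input is the structure of the Torelli group due to Johnson, Hain and Morita.

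\emph{Reduction.} Since $\pi_1(S_g)$ has trivial center for $g\ge 2$, the extension (\ref{*}) is the pullback, along the outer action ${\cal I}_g\hookrightarrow\operatorname{Mod}_g\cong\operatorname{Out}^{+}(\pi_1 S_g)$, of $1\to\pi_1 S_g\to\operatorname{Aut}^{+}(\pi_1 S_g)\to\operatorname{Out}^{+}(\pi_1 S_g)\to1$, which is precisely the full Birman exact sequence $1\to\pi_1 S_g\to\operatorname{Mod}_{g,1}\to\operatorname{Mod}_g\to1$. Hence (\ref{*}) splits if and only if the inclusion ${\cal I}_g\hookrightarrow\operatorname{Mod}_g$ lifts, as a homomorphism, through $\operatorname{Mod}_{g,1}\to\operatorname{Mod}_g$ (any such lift automatically lands in ${\cal I}_{g,1}$); and since fiber, base and total space are $K(\pi,1)$'s, this is equivalent to the universal surface bundle over the Torelli space admitting a continuous section. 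Assume for contradiction that a section $s\colon{\cal I}_g\to{\cal I}_{g,1}$ exists, so that ${\cal I}_{g,1}=\pi_1 S_g\rtimes_s{\cal I}_g$.

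\emph{Why the first-order invariants do not suffice.} The obvious attempts fail, and it is worth recording why. Pushing $s$ through the Johnson homomorphism gives a homomorphism $\tau_1\circ s\colon{\cal I}_g\to\wedge^{3}H$ lifting the surjection ${\cal I}_g\to\wedge^{3}H/H$, and since $\wedge^{3}H/H$ is free abelian such a lift exists abstractly. Dually, pushing $s$ through the rational cohomology transgression of the associated flat surface bundle gives an $\operatorname{Sp}(2g,\mathbb Q)$-equivariant map $H^{1}(S_g;\mathbb Q)\to H^{2}({\cal I}_g;\mathbb Q)$ which a section would have to kill; but by Johnson's $H^{1}({\cal I}_g;\mathbb Q)\cong\wedge^{3}H/H$ and Hain's quadratic description of $H^{2}({\cal I}_g;\mathbb Q)$ (for $g$ in the stable range), the standard representation $H$ does not occur in $\operatorname{Sym}^2$ or $\wedge^2$ of $\wedge^{3}H/H$, so this transgression already vanishes for elementary representation-theoretic reasons and gives no obstruction. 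The upshot is that the obstruction must be genuinely non-abelian, invisible to the second nilpotent quotient of $\pi_1(S_g)$.

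\emph{The real obstruction and the crux.} So the plan is to descend the Johnson filtration. First one corrects $s$ by a point-pushing — that is, by a crossed homomorphism ${\cal I}_g\to\pi_1 S_g$ — to try to make $s$ carry the Johnson kernel ${\cal K}_g=\ker(\bar\tau_1)$ into the Johnson kernel ${\cal K}_{g,1}\subset{\cal I}_{g,1}$; already whether this correction can be completed has content, coming from the $\operatorname{Sp}$-module structure of $({\cal K}_g^{\mathrm{ab}})_{{\cal I}_g/{\cal K}_g}$. If it can, $s$ restricts to a section of the Birman-type sequence for the kernels, and one then obstructs the lift of the ${\cal K}_g$-action to $\pi_1(S_g)/\gamma_4$ by a secondary class assembled from the second Johnson homomorphism $\tau_2$ together with the residual ``$H$-part'' of $\tau_1$; the theorem comes down to showing this class is nonzero for $g>3$. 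I expect this non-vanishing to be the main obstacle: the obstruction is a secondary operation rather than a primary characteristic class, and establishing that it does not vanish seems to require feeding in Morita's trace map and the core of the Casson invariant, and Hain's presentation of the Torelli Lie algebra — inputs available exactly in the range $g>3$. I would expect the two proofs announced in the introduction to execute this final non-vanishing step in two ways: one global, through the infinitesimal (Hodge-theoretic) presentation of ${\cal I}_g$, and one hands-on, restricting to an explicit small subsurface and computing with generators of the Torelli group directly.
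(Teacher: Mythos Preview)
Your proposal is not a proof; it is a strategy whose decisive step is left as an expectation. You correctly set up the problem as a lifting problem through $\operatorname{Mod}_{g,1}\to\operatorname{Mod}_g$, and your observation that the first-order (Johnson, transgression) invariants vanish for representation-theoretic reasons is accurate and worth recording. But the entire content of the theorem is the non-vanishing of whatever obstruction lives one step deeper, and here you write only that you ``expect this non-vanishing to be the main obstacle'' and that it ``seems to require'' Morita's trace and Hain's presentation. No obstruction class is actually defined, no computation is carried out, and no non-vanishing is established. Concretely: you have not specified in which group the secondary class lives, how it is built from $\tau_2$ and the residual $H$-part of $\tau_1$, or why the inputs you list force it to be nonzero. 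Until that is done, nothing has been proved.

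Your guess about how the paper proceeds is also off. Neither proof in the paper goes through the nilpotent tower or Hain's infinitesimal presentation. The first proof is geometric: assuming a splitting $\phi$, one analyzes $\phi(T_aT_b^{-1})$ for a bounding pair using canonical reduction systems and Thurston's classification, shows that the lift is again (essentially) a bounding pair and that the ``$a$-component'' of the lift depends only on $a$, and then derives a contradiction from the lantern relation by reducing to a nonsplitting statement for $PB_3\to PB_4$. The second proof restricts $\phi$ to the centralizer of a separating Dehn twist, uses an Euler-class argument to pin down $\phi(T_s)$, and from this manufactures a splitting of the forgetful map ${\cal PI}_{q,2}\to{\cal I}_{q,1}$, which is then ruled out by a self-intersection computation of the tautological sections over configuration space. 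Both arguments are considerably more hands-on than the Johnson-filtration approach you sketch, and neither requires identifying a secondary characteristic class.
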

\begin{remark}Our proof needs the condition $g>3$. By \cite[Proposition 4]{MR1191379}, ${\cal I}_2$ is a free group. So the Birman exact sequence for ${\cal I}_2$ splits. The case $g=3$ is open.\end{remark}

Let ${\cal BPI}_{g,n}:=K({\cal PI}_{g,n},1)$ be the \emph{pure universal Torelli space} fixing $n$ punctures pointwise and let 
\begin{equation}
S_g\to {\cal UPI}_{g,n}\xrightarrow{Tu_{g,n}}{\cal BPI}_{g,n}
\label{TUB2}
\end{equation}
 be the \emph{pure universal Torelli bundle}. Surface bundle (\ref{TUB2}) classifies smooth $S_g$-bundle equipped with a basis of $H^1(S_g;\mathbb{Z})$ and $n$ ordered points on each fiber. Since ${\cal PI}_{g,n}$ fixes $n$ points, there are $n$ distinct sections $\{Ts_i|1\le i\le n\}$ of the universal Torelli bundle (\ref{TUB2}). Let ${\cal BI}_{g,n}:=K({\cal I}_{g,n},1)$ be the \emph{universal Torelli space} fixing $n$ punctures as a set and let 
\begin{equation}
S_g\to {\cal UI}_{g,n}\xrightarrow{Tu'_{g,n}} {\cal BI}_{g,n}
\label{TUB1}
\end{equation}
 be the \emph{universal Torelli bundle}. This bundle classifies smooth $S_g$-bundles equipped with a basis of $H^1(S_g;\mathbb{Z})$ and $n$ unordered points on each fiber. Theorem \ref{main} says that $Tu_{g,0}$ has no sections. For $n\ge 0$, we have the following complete answer for sections of \eqref{TUB1} and \eqref{TUB2}.
\begin{thm}[\bf Classification of sections for punctured Torelli spaces]
The following holds:\\
(1) For $n\ge 0$ and $g>3$, every section of the universal Torelli bundle (\ref{TUB2}) is homotopic to $Ts_i$ for some $i\in \{1,2,...,n\}$.  \\
(2) For $n>1$ and $g>3$, the universal Torelli bundle (\ref{TUB1}) has no continuous sections.
\label{Torelli}
\end{thm}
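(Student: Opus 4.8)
The plan is first to convert everything into a splitting problem. Since $S_g$ is aspherical and the bases $\mathcal{BPI}_{g,n},\mathcal{BI}_{g,n}$ are $K(\pi,1)$'s, the total spaces are aspherical, so homotopy classes of sections of $Tu_{g,n}$ (resp. $Tu'_{g,n}$) biject with $\pi_1(S_g)$-conjugacy classes of group-theoretic splittings of $1\to\pi_1(S_g)\to\pi_1(\mathcal{UPI}_{g,n})\to\mathcal{PI}_{g,n}\to 1$ (resp. with $\mathcal{I}_{g,n}$). The monodromy of the $S_g$-bundle factors through the point-forgetting homomorphism to $\mathcal{I}_g$, so $\mathcal{UPI}_{g,n}$ is pulled back from the universal Torelli bundle $\mathcal{UI}_g=\mathcal{UPI}_{g,0}$ and $\pi_1(\mathcal{UPI}_{g,n})\cong\mathcal{PI}_{g,n}\times_{\mathcal{I}_g}\mathcal{I}_{g,1}$, the pullback of the extension \eqref{*} along $q_n\colon\mathcal{PI}_{g,n}\to\mathcal{I}_g$. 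Thus a section of $Tu_{g,n}$ amounts to a homomorphism $\tau\colon\mathcal{PI}_{g,n}\to\mathcal{I}_{g,1}$ lifting $q_n$ through $T\pi_{g,1}$, taken up to postcomposition with conjugation by $\pi_1(S_g)=\ker T\pi_{g,1}$; the section $Ts_i$ corresponds to $F_i$, the ``forget all marked points but the $i$-th'' map $\mathcal{PI}_{g,n}\to\mathcal{PI}_{g,1}=\mathcal{I}_{g,1}$. Restricting to the subgroup that pushes the $j$-th point shows the $F_i$ are pairwise non-conjugate, so $Tu_{g,n}$ has at least $n$ sections, and the content of (1) is that it has no others.

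For (1) I would induct on $n$, the base case $n=0$ being exactly Theorem~\ref{main}. Given a lift $\tau$ of $q_n$, let $P\cong\pi_1(S_{g,n-1})$ be the point-pushing kernel of $\mathcal{PI}_{g,n}\to\mathcal{PI}_{g,n-1}$ (forget the $n$-th point); since $q_n$ kills $P$, restriction gives $\psi:=\tau|_P\colon\pi_1(S_{g,n-1})\to\pi_1(S_g)$. If $\psi$ is trivial, $\tau$ descends to a lift of $q_{n-1}$ on $\mathcal{PI}_{g,n-1}$, conjugate by induction to $F_i$ for some $i\le n-1$, whence $\tau\sim F_i$. If $\psi$ is nontrivial the goal is $\tau\sim F_n$. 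First one argues $\psi(c_j)=1$ for the peripheral loops $c_1,\dots,c_{n-1}$: this is the obstruction in $\pi_1(S_g)$ to extending the section over the codimension-two loci $Ts_j(\mathcal{BPI}_{g,n-1})$, and is also visible from $\psi(c_j)=\tau(T_{d_j})$ for the twist about a curve $d_j$ bounding a disc containing only the $j$-th and $n$-th marked points. Hence $\psi$ factors as $\bar\psi\circ(\text{puncture-filling surjection})$ with $\bar\psi\colon\pi_1(S_g)\to\pi_1(S_g)$ nontrivial and equivariant for the $\mathcal{I}_g$-action carried by $\tau$. One then shows $\bar\psi$ is inner: if $\bar\psi$ is injective its image cannot be free (a closed surface group does not embed in a free group), so it has finite index, so it equals $\pi_1(S_g)$ by an Euler characteristic count, and the equivariance forces the class of $\bar\psi$ into the centralizer of $\mathcal{I}_g$ in $\mathrm{Out}(\pi_1 S_g)$, which is trivial for $g>3$; if $\bar\psi$ is not injective, $\ker\bar\psi$ is a nontrivial proper $\mathcal{I}_g$-invariant normal subgroup of $\pi_1(S_g)$ with free quotient, which Hopficity together with non-vanishing of the Johnson homomorphism rules out. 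Finally, conjugating so that $\psi=F_n|_P$, the crossed homomorphism $x\mapsto\tau(x)F_n(x)^{-1}$ vanishes on $P$ and so takes values in the $P$-fixed points of $\pi_1(S_g)$, i.e. in its center, which is trivial; therefore $\tau=F_n$.

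For (2), assume $n>1$ and let $\tau\colon\mathcal{I}_{g,n}\to\mathcal{I}_{g,1}$ lift the forgetful map. Restricting to $\mathcal{PI}_{g,n}$ and applying (1), after a conjugation $\tau|_{\mathcal{PI}_{g,n}}=F_i$. Pick $j\ne i$ and a half-twist $h\in\mathcal{I}_{g,n}$ exchanging the $i$-th and $j$-th marked points inside a disc $D$ containing no other marked point. Forgetting all marked points makes $h$ trivial, so $\tau(h)\in\pi_1(S_g)$; moreover $h^2=T_{\partial D}\in\mathcal{PI}_{g,n}$ and $F_i(T_{\partial D})=1$, since $\partial D$ bounds a once-marked disc once all points but the $i$-th are forgotten, so $\tau(h)^2=1$ and hence $\tau(h)=1$ because $\pi_1(S_g)$ is torsion free. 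Choosing a loop $\gamma$ with nontrivial image $\bar\gamma\in\pi_1(S_g)$ and setting $a=\mathrm{Push}_j(\gamma)$, on one hand $\tau(hah^{-1})=\tau(h)F_i(a)\tau(h)^{-1}=F_i(\mathrm{Push}_j(\gamma))=1$ (forgetting all points but the $i$-th kills a push of the $j$-th point), and on the other hand $hah^{-1}=\mathrm{Push}_i(h(\gamma))\in\mathcal{PI}_{g,n}$, so $\tau(hah^{-1})=F_i(\mathrm{Push}_i(h(\gamma)))=\mathrm{Push}(\overline{h(\gamma)})\ne 1$ because $\overline{h(\gamma)}$ is conjugate to $\bar\gamma$. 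This contradiction proves (2); together with the first paragraph, (1) yields exactly $n$ homotopy classes of sections of $Tu_{g,n}$.

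The step I expect to be the main obstacle is the rigidity inside (1): pinning down $\psi$ in the nontrivial case, and in particular excluding the possibility that $\psi$ collapses $\pi_1(S_g)$ onto a proper free quotient — this is where the finer structure of the Torelli group (triviality of its centralizer, non-triviality of the Johnson homomorphism) must be used. Everything else is bookkeeping with Birman exact sequences and the two explicit mapping-class computations in (2).
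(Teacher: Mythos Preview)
Your reduction to splittings and the pullback description of $\pi_1(\mathcal{UPI}_{g,n})$ match the paper's Section~3.1. For part~(2), your half-twist computation is a genuinely different and more direct route than the paper's: the paper restricts $\tau$ to $B_n(S_g)$, shows via \cite[Theorem~5]{lei1} that any homomorphism $B_n(S_g)\to\pi_1(S_g)$ has free image (Lemma~\ref{claim1}), and then uses the Johnson-homomorphism obstruction (Lemma~\ref{Johnson}) to preclude equivariant extension. Your argument with the half-twist $h$ avoids this detour entirely and is correct as written.

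For part~(1), your inductive scheme is close in spirit to the paper's proof of Proposition~\ref{P} --- the paper restricts to the full kernel $PB_n(S_g)$ at once and invokes the classification of maps $PB_n(S_g)\to\pi_1(S_g)$ from \cite{lei1} as a black box, whereas you peel off one point at a time and redo the trichotomy by hand. Two steps need repair, however. First, $\psi(c_j)=1$ is not established by either of your stated reasons: the identity $\psi(c_j)=\tau(T_{d_j})$ is correct but does not by itself force vanishing, and the obstruction-theoretic phrasing only says $\psi(c_j)$ \emph{would} vanish if the section extended over that locus, which you have not assumed. The real argument is equivariance: the stabilizer of $d_j$ in $\mathcal{PI}_{g,n}$ surjects onto $\mathcal{I}_g$, so the conjugacy class of $\tau(T_{d_j})\in\pi_1(S_g)$ is fixed by every $f\in\mathcal{I}_g$, in particular by a pseudo-Anosov, whence it is trivial by Lemma~\ref{thur}. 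Second, in the non-injective case for $\bar\psi$ you invoke only the Johnson homomorphism, but that argument (Lemma~\ref{Johnson}) requires the free image to have rank at least $2$; you must separately dispose of $\operatorname{im}\bar\psi\cong\mathbb{Z}$, which again goes via the pseudo-Anosov fixed-conjugacy-class argument (exactly Case~1 of the paper's proof). With these two patches your crossed-homomorphism endgame is sound, and your approach has the virtue of being more self-contained than the paper's reliance on \cite{lei1}.
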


Let ${\cal M}_g:=K(\text{Mod}_g,1)$. As is known, the universal bundle over ${\cal M}_g$:
\[S_g\to {\cal UM}_g\xrightarrow{} {\cal M}_g\] has no sections. This can be seen from the corresponding algebraic problem of finding splittings of the \emph{Birman exact sequence}
\[
1\to \pi_1(S_g)\to \text{Mod}_{g,1}\to \text{Mod}_{g}\to 1.
\]
The answer is no because of torsion, e.g. see \cite[Corollary 5.11]{BensonMargalit}. The key fact is that every finite subgroup of Mod$_{g,1}$ is cyclic. However, this method does not work for torsion-free subgroups of Mod$_g$. For any subgroup $G<$ Mod$_g$, there is an extension $\Gamma_G$ of $G$ by $\pi_1(S_g)$ as the following short exact sequence. 
\begin{equation}
1\to \pi_1(S_g)\to \Gamma_G\to G\to 1.
\label{BESFG}
\end{equation}
We call \eqref{BESFG} the \emph{Birman exact sequence} for $G$ since it is induced from the Birman exact sequence. We pose the following open question: 
\begin{que}[\bf Virtually splitting of the Birman exact sequence]
Does the Birman exact sequence for a finite index subgroup of \normalfont{Mod}$(S_g)$ always not split?
\end{que}
Let the \emph{level L congruence subgroup} Mod$_g[L]$ be the subgroup of Mod$_g$ that acts trivially on $H_1(S_g;\mathbb{Z}/L\mathbb{Z})$ for some integer $L>1$. Theorem \ref{main} implies that a finite index subgroup of Mod$_g$ containing ${\cal I}_g$ does not split; in particular, this applies to all the congruence subgroups Mod$_g[L]$.
\\
\\
\medskip
{\large\bf Error in G. Mess \cite[Proposition 2]{mess1990unit}}

In the unpublished paper of G. Mess \cite[Proposition 2]{mess1990unit}, he claimed that there are no splittings of the exact sequence (\ref{*}). But his proof has a fatal error. Here is how the proof goes. Let $C$ be a curve dividing $S_g$ into $2$ parts $S(1)$ and $S(2)$ of genus $p$ and $q$, where $p,q\ge 2$. Let $UTS_g$ be  the unit tangent bundle of genus $g$ surface. Then ${\cal I}_g$ contains a subgroup $A$, which satisfies the following exact sequence
\[
1\to \mathbb{Z}\to \pi_1(UTS_p)\times \pi_1(UTS_q)\to A\to 1.\]
Mess' idea is to prove that the Birman exact sequence for  $A$ does not lift. However, in Case a) of Mess' proof for \cite[Proposition 2]{mess1990unit}, Mess claimed that if the Dehn twist about $C$ lifts to a Dehn twist about $C'$ on $S_{g,1}$ and $C'$ bounds a genus $p$ surface with a puncture and a genus $q$ surface, then there is a lift from $\pi_1(UTS_p)$ to $\pi_1(UTS_{p,1})$. This is a wrong claim. Actually even $A$ does have a lift. We construct a lift of $A$ as the following. Let $\text{PConf}_2(S_p)$ be the \emph{pure configuration space} of $S_p$, i.e. the space of $2$-tuples of distinct points on $S_p$. Let 
\[
\text{PConf}_{1,1}(S_p)=\{(x,y,v)|x\neq y\in S_g \text{ and } v \text{ a unit vector at $x$}\}.
\]
We have the following pullback diagram:
\begin{equation}
\xymatrix{
 \pi_1(\text{PConf}_{1,1}(S_p))\ar[r]\ar[d]^f   \pb  & \pi_1(\text{PConf}_2(S_p))\ar[d]^g\\
   \pi_1(\text{UT}S_p)\ar[r]    & \pi_1(S_p)   .}
\label{Mess}
\end{equation}
The lift of $\pi_1(UTS_p)$ should lie in $\pi_1(\text{PConf}_{1,1}(S_p))$ instead of $\pi_1(UTS_{p,1})$ as Mess claimed. As long as we can find a lift of $f$, we will find a section of $A$ to ${\cal I}_{g,*}$. By the property of pullback diagrams, a section of $g$ can induce a section of $f$ in diagram \eqref{Mess}. To negate the argument of \cite[Proposition 2]{mess1990unit}, we only need to construct a section of $g$. We simply need to find a self-map of $S_g$ that has no fixed point. For example, the composition of a retraction of $S_p$ onto a curve $c$ and a rotation of $c$ at any nontrivial angle does not have a fixed point. Therefore Mess' proof is invalid and does not seem to be repairable.
\\
\\
\medskip
{\large\bf Strategy of the proof of Theorem \ref{main}}

 Let $T_a$ be the Dehn twist about a simple closed curve $a$ on $S_g$. Our strategy is the following: assume that we have a splitting of (\ref{*}). The main result of \cite{johnson1983structure} shows that all \emph{bounding pair map} i.e. $T_aT_b^{-1}$ for a pair of nonseparating curves $a,b$ that bound a subspace generate ${\cal I}_{g}$. Firstly we need to understand the lift of $T_aT_b^{-1}$. We show that the lift of a bounding pair $T_aT_b^{-1}$ has to be a bounding pair $T_{a'}T_{b'}^{-1}$ for $a',b'$ on $S_{g,1}$. Moreover, the curve $a'$ does not depend on the choice of $b$, i.e. for any other curve $c$ that forms a bounding pair with curve $a$, the lift of $T_aT_c^{-1}$ is $T_{a'}T_{c'}^{-1}$ for the same $a'$. Therefore, we have a lift from the set of isotopy classes of curves on $S_g$ to the set of isotopy classes of curves on $S_{g,1}$. Then we use the lantern relation to derive a contradiction. Our main tool is the canonical reduction system for a mapping class, which in turn uses the Thurston classification of isotopy classes of diffeomorphisms of surfaces. This idea originated from \cite{MR726319}.
\\
\\
\medskip
{\large\bf Acknowledgement}

The author would like to thank Nick Salter for discussing the content of this paper. She thanks Matt Clay and Dan Margalit for reminding me the fact that ${\cal I}_2$ is a free group. Lastly, she would like to extend her warmest thanks to Benson Farb for his extensive comments as well as for his invaluable support from start to finish.

\section{The proof of Theorem \ref{main}}
Let $g>3$. We assume that the exact sequence (\ref{*}) has a splitting which is denoted by $\phi$ such that $F\circ \phi=id$. The goal of this section is to prove Theorem \ref{main} by contradiction. In all the figures in this section, $*$ represents the puncture of $S_{g,1}$, the genus $g$ surface with one puncture.

\subsection{Background}
In this subsection we discuss some properties of canonical reduction systems and the lantern relation. Let $S=S_{g,p}^b$ be a surface with $b$ boundary components and $p$ punctures. Let Mod$(S)$ (reps. PMod$(S)$) be the \emph{mapping class group} (resp. \emph{pure mapping class group}) of $S$, i.e. the group of isotopy classes of orientation-preserving diffeomorphisms of $S$ fixing the boundary components pointwise and the punctures as a set (resp. pointwise).  By ``simple closed curves", we often mean isotopy class of simple closed curves, e.g. by ``preserve a simple closed curve", we mean preserve the isotopy class of a curve. 

Thurston's classification of elements of Mod$(S)$ is a very powerful tool to study mapping class groups. We call a mapping class $f\in\text{Mod}(S)$ \emph{reducible} if a power of $f$ fixes a nonperipheral simple closed curve. Each nontrivial element $f\in\text{Mod}(S)$ is of exactly one of the following types: periodic, reducible, pseudo-Anosov. See \cite[Chapter 13]{BensonMargalit} and \cite{FLP} for more details. We now give the definition of canonical reduction system. 

\begin{defn}[{\bf Reduction systems}]
 A \emph{reduction system} of a reducible mapping class $h$ in $\text{\normalfont Mod}(S)$ is a set of disjoint nonperipheral curves that $h$ fixes as a set up to isotopy. A reduction system is \emph{maximal} if it is maximal with respect to inclusion of reduction systems for $h$. The \emph{canonical reduction system} $\text{\normalfont CRS}(h)$ is the intersection of all maximal reduction systems of $h$. 
\end{defn}
For a reducible element $f$, there exists $n$ such that $f^n$ fixes each element in CRS$(f)$ and after cutting out CRS$(f)$, the restriction of $f^n$ on each component is either periodic or pseudo-Anosov. See \cite[Corollary 13.3]{BensonMargalit}. Now we mention three properties of the canonical reduction systems that will be used later.
\begin{prop}
$\text{\normalfont CRS}(h^n)$=$\text{\normalfont CRS}(h)$ for any $n$.
\end{prop}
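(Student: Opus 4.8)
The plan is to avoid working directly with the ``intersection of maximal reduction systems'' definition and instead use the standard reformulation of the canonical reduction system in terms of periodic curves, for which the proposition becomes almost immediate. (Here $n\neq 0$ is of course intended, since $\mathrm{CRS}(h^0)=\mathrm{CRS}(\mathrm{id})=\emptyset$.)

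First I would record the elementary fact that $h$ and $h^n$ have exactly the same \emph{periodic curves}: a nonperipheral isotopy class $c$ satisfies $h^k(c)=c$ for some $k\neq 0$ if and only if $(h^n)^\ell(c)=c$ for some $\ell\neq 0$, since $h^k(c)=c$ yields $(h^n)^k(c)=h^{nk}(c)=c$, while $(h^n)^\ell(c)=h^{n\ell}(c)=c$ with $n\ell\neq 0$ exhibits $c$ as periodic for $h$. (In passing, via Thurston's classification this shows $h$ and $h^n$ are simultaneously periodic, reducible, or pseudo-Anosov.)

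Next I would invoke the well-known characterization of $\mathrm{CRS}$ (Birman--Lubotzky--McCarthy; see also \cite[Chapter 13]{BensonMargalit}): a nonperipheral simple closed curve $c$ lies in $\mathrm{CRS}(h)$ if and only if $c$ is a periodic curve of $h$ and $i(c,d)=0$ for every periodic curve $d$ of $h$. Because this condition involves $h$ only through its set of periodic curves, which by the previous paragraph equals that of $h^n$, we conclude $\mathrm{CRS}(h)=\mathrm{CRS}(h^n)$. (This also recovers the fact that $\mathrm{CRS}$ is empty for periodic and pseudo-Anosov maps.)

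The content of the argument is therefore entirely in that characterization, which is the step I expect to be the real obstacle if one insists on a self-contained proof. One direction is soft: if $c$ is periodic and disjoint from all periodic curves, then the (finite, pairwise disjoint) $h$-orbit of $c$ is disjoint from every maximal reduction system $\mathcal M$, so $\mathcal M$ together with this orbit is again a reduction system, forcing $c\in\mathcal M$ by maximality, hence $c\in\mathrm{CRS}(h)$. The other direction needs the Thurston decomposition: after replacing $h$ by a power fixing each curve of a maximal reduction system through $c$, essentiality of $c$ forces the pieces adjacent to $c$ to be pseudo-Anosov or to be glued across $c$ with nontrivial twisting, and in either case the stretching, resp. the accumulating twisting, prevents any curve $d$ with $i(c,d)>0$ from being periodic. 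This last dichotomy is precisely the piece of Thurston / Birman--Lubotzky--McCarthy theory that the surrounding section permits one to take for granted.
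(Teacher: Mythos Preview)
Your argument is correct: the Birman--Lubotzky--McCarthy characterization of $\mathrm{CRS}(h)$ as the set of periodic simple closed curves that meet no other periodic curve depends on $h$ only through its set of periodic curves, and this set is manifestly the same for $h$ and $h^n$ (with $n\neq 0$). The paper does not actually argue this proposition; it simply records it as classical and points to \cite[Chapter~13]{BensonMargalit}, which is precisely where the characterization you invoke is developed. So your write-up is a fleshed-out version of what the paper leaves as a citation rather than a genuinely different route; the only thing to flag is that your final paragraph, sketching why the characterization holds, is not needed for the proposition itself and overlaps with the content of the paper's Proposition~\ref{noin}.
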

\begin{proof}This is classical; see  \cite[Chapter 13]{BensonMargalit}.
\end{proof}
For a curve $a$ on a surface $S$, denote by $T_a$ the Dehn twist about $a$. For two curves $a,b$ on a surface $S$, let $i(a,b)$ be the geometric intersection number of $a$ and $b$. For two sets of curves $P$ and $T$, we say that $S$ and $T$ \emph{intersect} if there exist $a\in P$ and $b\in T$ such that $i(a,b)\neq 0$. Notice that two sets of curves intersecting does not mean that they have a common element.
\begin{prop}
Let $h$ be a reducible mapping class in $\text{\normalfont Mod}(S)$. If $\{\gamma\}$ and $\text{\normalfont CRS}(h)$ intersect, then no power of $h$ fixes $\gamma$.
\label{noin}
\end{prop}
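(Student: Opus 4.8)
The plan is to argue by contradiction, using only the definition of the canonical reduction system together with the fact $\mathrm{CRS}(h^n)=\mathrm{CRS}(h)$ established just above. Suppose that some power $h^n$ fixes the isotopy class of $\gamma$. Because $\{\gamma\}$ and $\mathrm{CRS}(h)$ intersect, there is a curve $c\in\mathrm{CRS}(h)$ with $i(\gamma,c)\neq 0$; since a null-homotopic or peripheral curve can be isotoped off every other simple closed curve, this already forces $\gamma$ to be essential and nonperipheral, so $\gamma$ is a legitimate reduction curve.

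Next I would pass from $h$ to $h^n$. Since $\mathrm{CRS}(h^n)=\mathrm{CRS}(h)$, the hypothesis that $\{\gamma\}$ intersects $\mathrm{CRS}(h^n)$ still holds, while now $h^n(\gamma)=\gamma$, so $\{\gamma\}$ is itself a reduction system for $h^n$. Extend $\{\gamma\}$ to a reduction system of $h^n$ that is maximal with respect to inclusion: such a system $\Sigma$ exists because any family of pairwise disjoint, pairwise non-isotopic essential simple closed curves on $S$ has cardinality bounded in terms of the complexity of $S$ (see \cite[Chapter 13]{BensonMargalit}), so the poset of reduction systems of $h^n$ containing $\{\gamma\}$ has maximal elements, and a maximal element of this poset is in fact a maximal reduction system of $h^n$. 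By the definition of the canonical reduction system, $\mathrm{CRS}(h^n)$ is contained in every maximal reduction system of $h^n$, hence $\mathrm{CRS}(h^n)\subseteq\Sigma$.

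To finish, note that all curves in the reduction system $\Sigma$ are pairwise disjoint and $\gamma\in\Sigma$, so $i(\gamma,c')=0$ for every $c'\in\Sigma$, and in particular for every $c'\in\mathrm{CRS}(h^n)=\mathrm{CRS}(h)$. This contradicts the assumption that $\{\gamma\}$ and $\mathrm{CRS}(h)$ intersect. I do not expect any genuine obstacle here: the only points needing a word of justification are that the intersection hypothesis makes $\gamma$ essential and nonperipheral, and the finiteness statement guaranteeing a maximal reduction system containing $\{\gamma\}$, both of which are standard.
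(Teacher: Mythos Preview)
Your proof is correct and follows essentially the same route as the paper's: assume a power of $h$ fixes $\gamma$, extend $\{\gamma\}$ to a maximal reduction system, and obtain a contradiction with disjointness since the canonical reduction system is contained in every maximal one. You are simply more explicit than the paper about passing from $h$ to $h^n$ via $\mathrm{CRS}(h^n)=\mathrm{CRS}(h)$ and about why $\gamma$ is nonperipheral, but the argument is the same.
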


\begin{proof}
Suppose that $h^n$ fixes $\gamma$. Therefore $\gamma$ belongs to a maximal reduction system $M$. By definition, $\text{CRS}(h)\subset M$. However $\gamma$ intersects some curve in CRS$(f)$; this contradicts the fact that $M$ is a set of disjoint curves. 
\end{proof}
\begin{prop}
Suppose that $h,f\in \text{\normalfont Mod}(S)$ and $fh=hf$. Then $\text{\normalfont CRS}(h)$ and $\text{\normalfont CRS}(f)$ do not intersect.
\label{CRS(x,y)}
\end{prop}
\begin{proof}
By conjugation, we have that CRS($hfh^{-1})=h(\text{CRS}(f))$. Since $hfh^{-1}=f$, we get that CRS$(f)=h(\text{CRS}(f))$. Therefore $h$ fixes the whole set CRS$(f)$. A power of $h$ fixes all curves in CRS$(f)$. By Proposition \ref{noin}, curves in CRS$(h)$ do not intersect curves in CRS$(f)$.
\end{proof}

We denote the symmetric difference of two sets $A$, $B$ by $A\triangle B$.
\begin{lem}
Let $h,f\in \text{\normalfont Mod}(S)$ be two reduced mapping classes such that $hf=hf$. Then $\text{CRS}(h)\triangle \text{CRS}(f)\subset \text{CRS}(hf)$.
\label{CRS(xy)}
\end{lem}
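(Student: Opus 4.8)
The plan is to combine the Birman--Lubotzky--McCarthy description of the canonical reduction system with a localization principle for $\mathrm{CRS}$ along an invariant subsurface. I will use two standard consequences of the Thurston classification (see \cite[Chapter~13]{BensonMargalit}): (a) a curve $c$ lies in $\mathrm{CRS}(h)$ if and only if some power of $h$ fixes $c$ and every curve fixed by some power of $h$ is disjoint from $c$; and (b) if $R$ is a reduction system for $h$ and $\Sigma$ is a component of $S\setminus R$ invariant under $h^{m}$, then the curves of $\mathrm{CRS}(h)$ lying in the interior of $\Sigma$ are exactly the curves of $\mathrm{CRS}(h^{m}|_{\Sigma})$. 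Reading the hypothesis as $hf=fh$ and setting $g:=hf$, note that $g$ commutes with each of $h$ and $f$, so Proposition~\ref{CRS(x,y)} applies to every pair among $h,f,g$, and by the conjugation identity used in its proof each of $\mathrm{CRS}(h)$ and $\mathrm{CRS}(f)$ is a reduction system for $g$ (and for the third map). Since $\mathrm{CRS}(h)\triangle\mathrm{CRS}(f)$ is symmetric in $h$ and $f$ and $hf=fh$, it suffices to take $\gamma\in\mathrm{CRS}(h)\setminus\mathrm{CRS}(f)$ and prove $\gamma\in\mathrm{CRS}(g)$.

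First I would check that a power of $g$ fixes $\gamma$. By the conjugation identity, $f(\mathrm{CRS}(h))=\mathrm{CRS}(h)$; since this set is finite, $f^{\ell}(\gamma)=\gamma$ for some $\ell$, and together with $h^{k}(\gamma)=\gamma$ and the identity $(hf)^{k\ell}=h^{k\ell}f^{k\ell}$ coming from commutativity this yields $g^{k\ell}(\gamma)=\gamma$. Next, $\gamma\notin\mathrm{CRS}(f)$ but $\gamma$ is disjoint from every curve of $\mathrm{CRS}(f)$ by Proposition~\ref{CRS(x,y)}, so $\gamma$ lies in the interior of a single component $\Sigma$ of $S\setminus\mathrm{CRS}(f)$, essential and non-peripheral there. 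By the defining property of $\mathrm{CRS}(f)$, a suitable power of $f$ preserves $\Sigma$ and restricts there to a periodic or pseudo-Anosov map; since a power of that restriction fixes the essential non-peripheral curve $\gamma$, it is periodic. Enlarging, I may pick $m$ with $f^{m}|_{\Sigma}=\mathrm{id}$ and with $h^{m},f^{m},g^{m}$ each fixing every curve of $\mathrm{CRS}(f)$ and preserving every component of its complement.

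Restricting $g^{m}=h^{m}f^{m}$ to $\Sigma$ then gives $g^{m}|_{\Sigma}=(h^{m}|_{\Sigma})(f^{m}|_{\Sigma})=h^{m}|_{\Sigma}$, so $g^{m}|_{\Sigma}$ and $h^{m}|_{\Sigma}$ are the same mapping class of $\Sigma$. Now localization closes the argument: by (b) for $h$ with the reduction system $\mathrm{CRS}(f)$, the set $\mathrm{CRS}(h^{m}|_{\Sigma})$ is exactly the set of curves of $\mathrm{CRS}(h)$ in the interior of $\Sigma$, which contains $\gamma$; hence $\gamma\in\mathrm{CRS}(h^{m}|_{\Sigma})=\mathrm{CRS}(g^{m}|_{\Sigma})$, and applying (b) to $g$ (again with $\mathrm{CRS}(f)$) gives $\mathrm{CRS}(g^{m}|_{\Sigma})\subseteq\mathrm{CRS}(g)$. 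Thus $\gamma\in\mathrm{CRS}(g)=\mathrm{CRS}(hf)$, and the symmetric case $\gamma\in\mathrm{CRS}(f)\setminus\mathrm{CRS}(h)$ is identical.

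The step I expect to be the main obstacle is the localization principle (b), more precisely the inclusion $\mathrm{CRS}(h^{m}|_{\Sigma})\subseteq\mathrm{CRS}(h)$: one must rule out that a curve canonical for the restriction stops being canonical on all of $S$, which is where the Thurston trichotomy genuinely enters. A pseudo-Anosov piece of $S\setminus R$ cannot have fused into a periodic piece after passing to $\mathrm{CRS}(h)$ (a pseudo-Anosov map is the identity on no open set), and two adjacent periodic pieces merge into a periodic piece exactly when the Dehn twisting along their common curve vanishes --- which holds here because that curve lies in a periodic component of $S\setminus\mathrm{CRS}(h)$. A smaller technical point, used silently above, is that an essential simple closed curve contained in an essential subsurface $\Sigma\subseteq S$ is determined up to isotopy in $\Sigma$ by its isotopy class in $S$; this is what permits transporting ``fixed by a power of the map'' and ``zero geometric intersection number'' between $\Sigma$ and $S$.
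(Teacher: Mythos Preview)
Your proof is correct and follows essentially the same route as the paper's: both take $\gamma\in\mathrm{CRS}(h)\setminus\mathrm{CRS}(f)$, pass to the component $\Sigma$ of $S\setminus\mathrm{CRS}(f)$ containing $\gamma$, observe that $f$ is periodic there so that a power of $hf$ restricted to $\Sigma$ coincides with a power of $h$, and then invoke a localization principle $\mathrm{CRS}(e|_{\Sigma})\subset\mathrm{CRS}(e)$ to conclude. The only cosmetic difference is that you cite the localization principle (b) as a black box in both directions, whereas the paper proves the step $\gamma\in\mathrm{CRS}(hf|_{\Sigma})$ by a short direct argument using Proposition~\ref{noin} and then asserts the inclusion $\mathrm{CRS}(e|_{C_i})\subset\mathrm{CRS}(e)$ without further comment.
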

\begin{proof}
Suppose that $\gamma\in$ CRS$(h)$ and $\gamma\notin$ CRS$(f)$. By Corollary \ref{CRS(x,y)}, $\gamma$ does not intersect CRS$(f)$. The canonical form of $f$ has a component $C$ that contains $\gamma$. From $fhf^{-1}=h$ we know that $f$ permutes CRS$(h)$, e.g. a power of $f$ fixes $\gamma$. Since a pseudo-Anosov element does not fix any curve, a power of $f$ is the identity on $C$.

Since $hfh^{-1}=f$, we know that $h$ permutes the components in the canonical form of $f$, e.g. $h(C)$ is another component in the canonical form of $f$. Since $f$ permutes CRS$(h)$, a power of $f$ fixes $\gamma$. This shows that $C$ and $h(C)$ intersect, therefore we have that $h(C)=C$.

Suppose that on the component $C$, the curve $\gamma\notin \text{CRS}(hf)$. This means that there is a curve $\gamma'\subset C$ such that $(hf)^n(\gamma')=\gamma'$ for some integer $n$ and $i(\gamma,\gamma')\neq 0$. A power of $f$ is the identity on $C$, therefore $f$ fixes $\gamma'$. However no power of $h$ fixes $\gamma'$ by Proposition \ref{noin}. Therefore, no power of $hf$ fixes $\gamma'$. This is a contradiction, which shows that $\gamma\in\text{CRS}(hf|_C)$.

For any element $e\in \text{Mod}(S)$ such that the action on $S$ can be broken into actions on components $\{C_1,...,C_k\}$, we have 
\[
\text{CRS}(e|_{C_1})\cup ...\cup \text{CRS}(e|_{C_k})\subset \text{CRS}(e).
\]
Therefore $\gamma\in\text{CRS}(hf|C)\subset \text{CRS}(hf)$.
\end{proof}

Now, we introduce a remarkable relation for $\text{\normalfont Mod}(S)$ that will be used in the proof.
\begin{prop}[\bf\boldmath The lantern relation]
There is an orientation-preserving embedding of $S_{0,4}\subset S$ and let $x,y,z,b_1,b_2,b_3,b_4$ be simple closed curves in $S_{0,4}$ that are arranged as the curves shown in the following figure. 
\begin{figure}[H]
\centering
\includegraphics[scale=0.25]{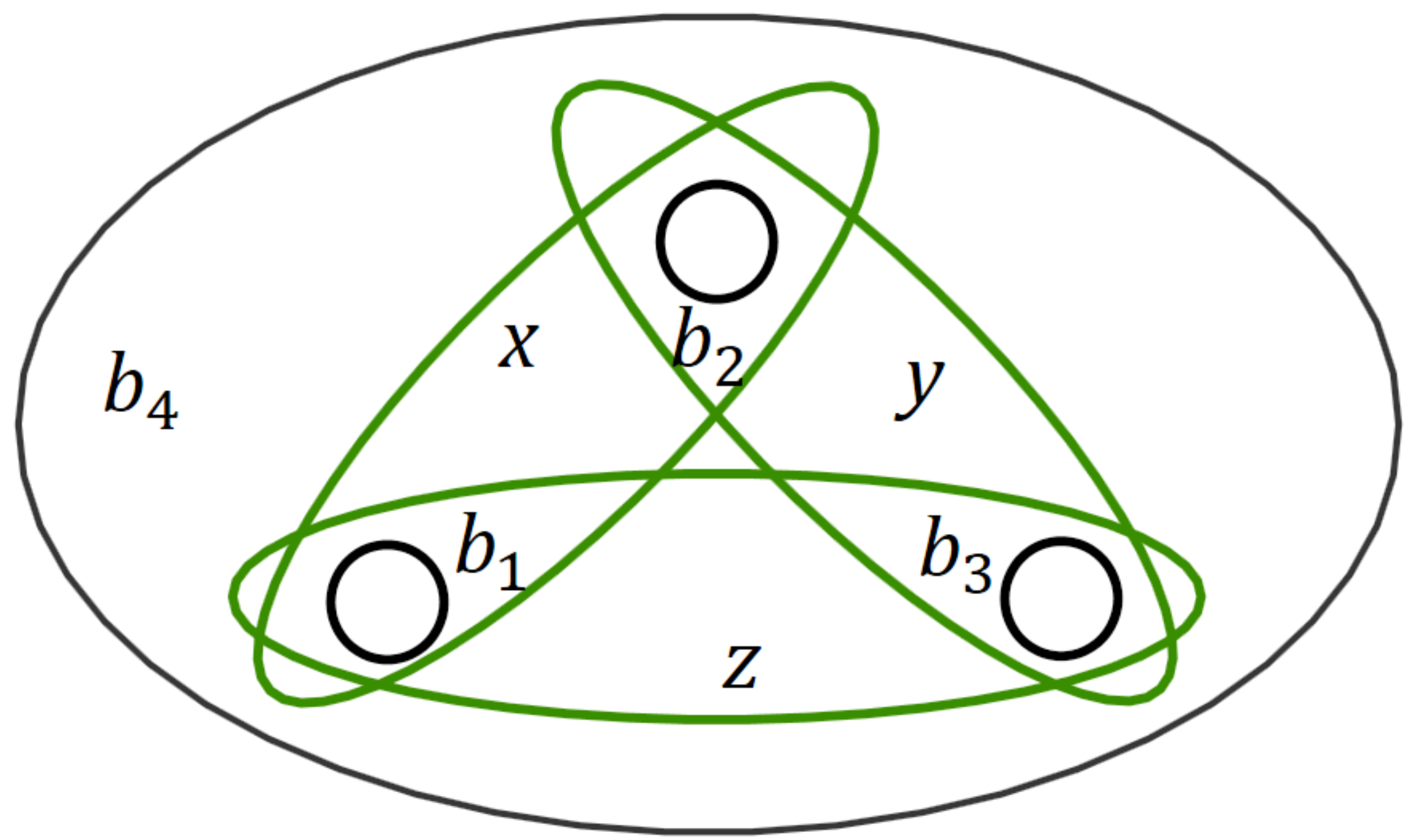}
\end{figure}
In {\normalfont Mod}$(S)$ we have the relation
\[T_xT_yT_z=T_{b_1}T_{b_2}T_{b_3}T_{b_4}.\]
\end{prop}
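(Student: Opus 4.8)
The plan is to reduce the statement to an identity in $\mathrm{Mod}(S_{0,4})$, the mapping class group of the four-holed sphere with its boundary fixed pointwise, and then verify that identity by the Alexander method. All seven curves $x,y,z,b_1,b_2,b_3,b_4$ lie in the embedded copy of $S_{0,4}$, and extending a diffeomorphism of $S_{0,4}$ by the identity on $S\setminus S_{0,4}$ sends the Dehn twist about each of these curves to the Dehn twist about its image in $S$; so it is enough to prove $T_xT_yT_z=T_{b_1}T_{b_2}T_{b_3}T_{b_4}$ in $\mathrm{Mod}(S_{0,4})$, where now $b_1,\dots,b_4$ denote the four boundary-parallel curves.

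First I would fix a concrete model: draw $S_{0,4}$ as a large disk with three small open disks removed, with $b_4$ the outer boundary and $b_1,b_2,b_3$ the inner ones, and $x,y,z$ the three curves each enclosing exactly two of the inner holes. I would then choose three disjoint properly embedded arcs $\alpha_1,\alpha_2,\alpha_3$, with $\alpha_i$ running from the outer boundary to the $i$-th inner boundary, chosen so that cutting $S_{0,4}$ along $\alpha_1\cup\alpha_2\cup\alpha_3$ yields a single disk. By the Alexander method, since this arc system fills $S_{0,4}$, an element of $\mathrm{Mod}(S_{0,4})$ fixing the boundary pointwise is determined by the isotopy classes rel endpoints of the images of $\alpha_1,\alpha_2,\alpha_3$; note that each boundary twist $T_{b_i}$ is detected because $\alpha_i$ crosses $b_i$ once. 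The action of $T_x,T_y,T_z$ on an arc is the usual ``cut along the arc and splice in a parallel copy of the twisting curve'' operation, and $T_{b_i}$ inserts a loop around the $i$-th hole near the endpoints meeting it.

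Next I would compute, for each $j\in\{1,2,3\}$, the images $T_xT_yT_z(\alpha_j)$ and $T_{b_1}T_{b_2}T_{b_3}T_{b_4}(\alpha_j)$, isotope both into a common normal form, and check that they agree; equivalently, one checks that $(T_{b_1}T_{b_2}T_{b_3}T_{b_4})^{-1}T_xT_yT_z$ fixes each $\alpha_j$ up to isotopy rel endpoints, whence it is the identity by the Alexander method. I expect the only real obstacle to be the bookkeeping: tracking how each successive twist distorts each arc, with consistent orientation conventions, is routine but error-prone, and one must make sure the chosen arc system genuinely cuts $S_{0,4}$ into a disk so that the Alexander method applies and the boundary twists are visible. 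As a cross-check and alternative I would keep in mind the standard treatment in \cite[Section 5.1]{BensonMargalit} and the derivation via the hyperelliptic involution, but the arc computation above is self-contained.
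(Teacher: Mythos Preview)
Your proposal is correct. The paper itself gives no argument at all: its entire proof is ``This is classical; see \cite[Chapter 5.1]{BensonMargalit}.'' Your Alexander-method verification on a filling arc system in $S_{0,4}$ is precisely the approach that reference outlines, so you are not taking a different route but rather filling in the content that the paper deliberately outsources.
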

\begin{proof}
This is classical; see \cite[Chapter 5.1]{BensonMargalit}.
\end{proof}

\subsection{Lifts of bounding pair maps}
Let $\{a,b\}$ be a \emph{bounding pair} as in the following figure, i.e. $a,b$ are nonseparating curves such that $a$ and $b$ bounds a subsurface. Denote by $T_c$ the Dehn twist about a curve $c$. In this subsection, we determine $\phi(T_aT_b^{-1})$. For two curves $c$ and $d$, denote by $i(c,d)$ the geometric intersection number of $c$ and $d$. For a curve $c'$ on $S_{g,1}$, when we say $c'$ is isotopic to a curve $c$ on $S_g$, we mean that $c'$ is isotopic to $c$ on $S_g$.
\begin{figure}[H]
\centering
\includegraphics[scale=0.25]{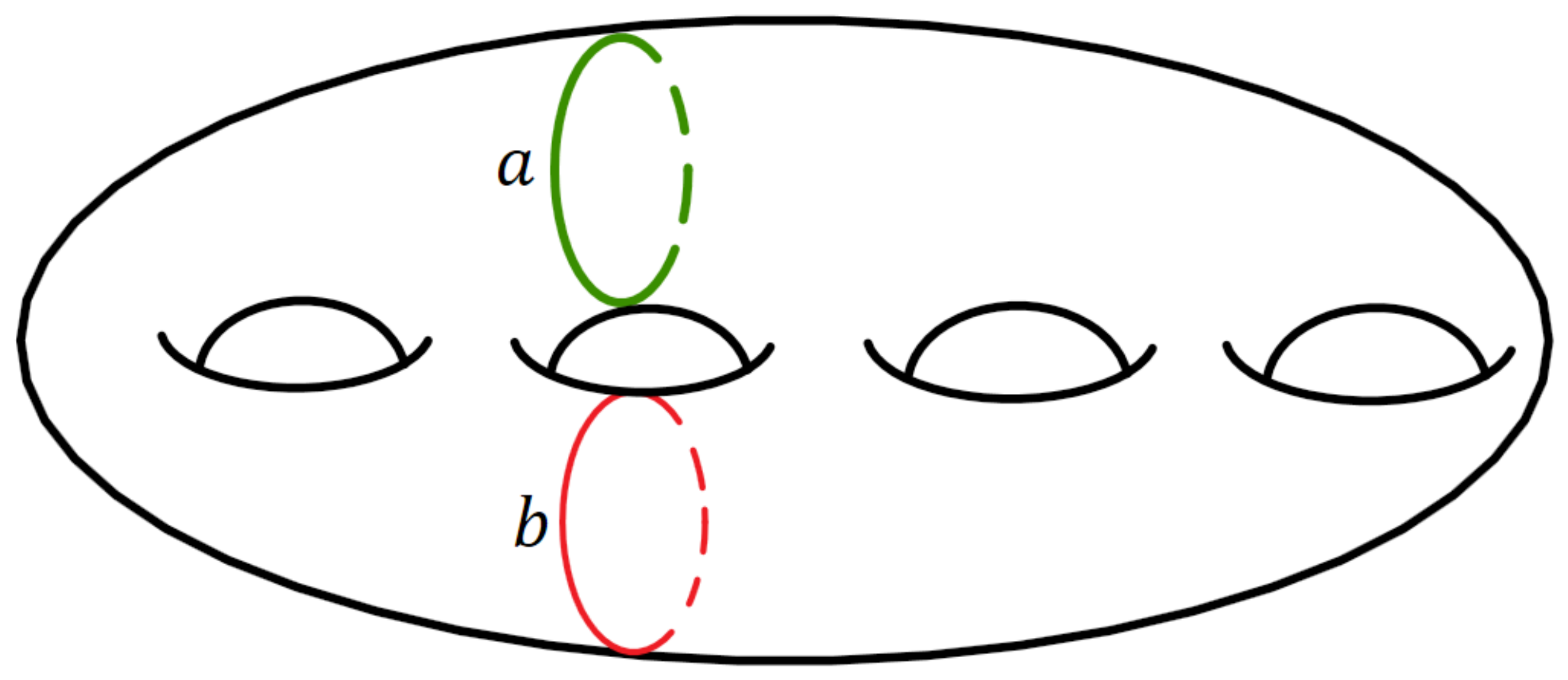}
  \caption{A bounding pair $a,b$}\label{BP}
\end{figure}
\begin{lem}
Let $\{a,b\}$ be a bounding pair as in Figure \ref{BP}. Up to a swap of $a$ and $b$, CRS$(\phi(T_aT_b^{-1}))$ can be one of the following two cases. Moreover, either $\phi(T_aT_b^{-1})=T_{a'}T_{b'}^{-1}$ as in case 1 or there exists an integer $n$ such that $\phi(T_aT_b^{-1})=T_{a'}^{n}T_{a''}^{1-n}T_{b'}^{-1}$ as in case 2. For a Dehn twist $T_s$ about a separating curve $s$, there exists a pair of disjoint curves $s'$ and $s''$ such that they are all isotopic to $s$ and $\phi(T_s)=T_{s'}^{n}T_{s''}^{1-n}$ for some integer $n$.
\begin{figure}[H]
\minipage{0.40\textwidth}
  \includegraphics[width=\linewidth]{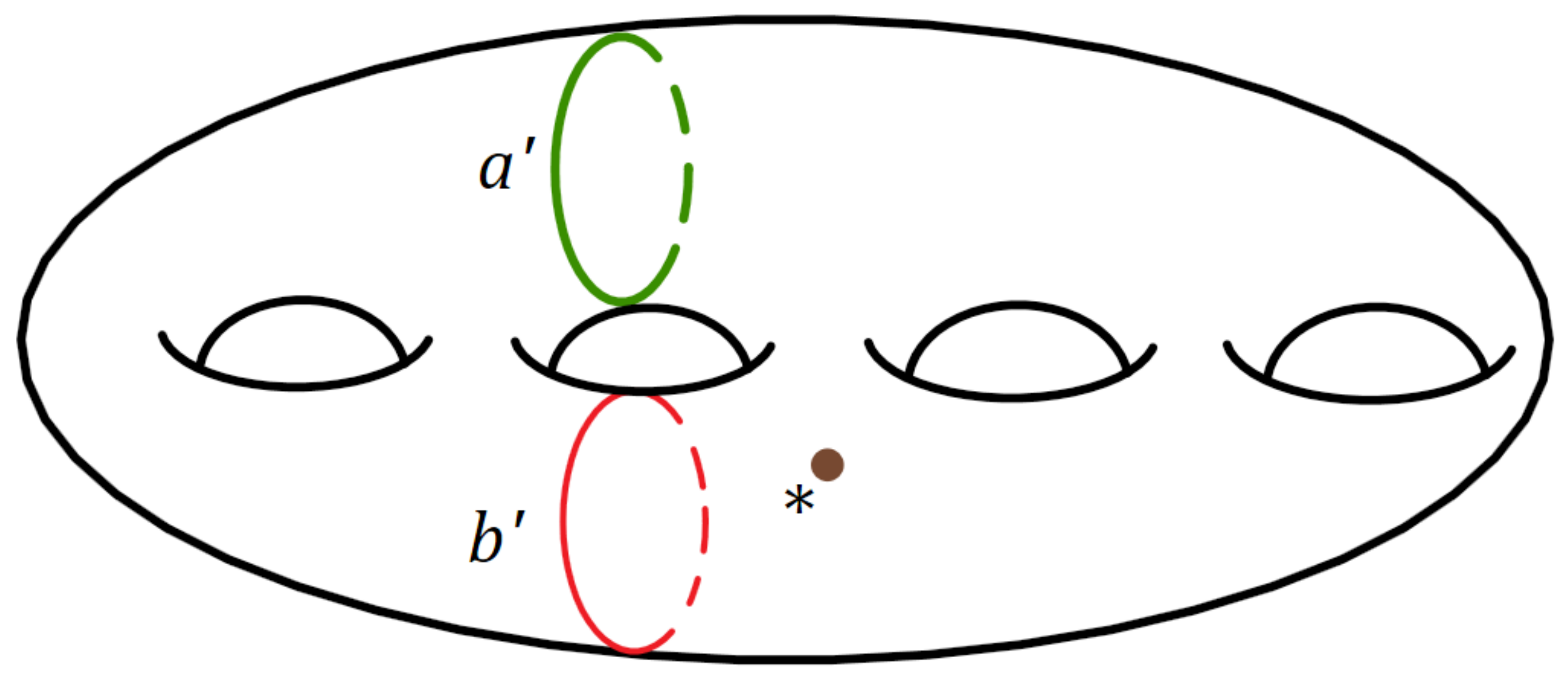}
  \caption{Case 1}
  \label{figure2}
\endminipage\hfill
\minipage{0.40\textwidth}
  \includegraphics[width=\linewidth]{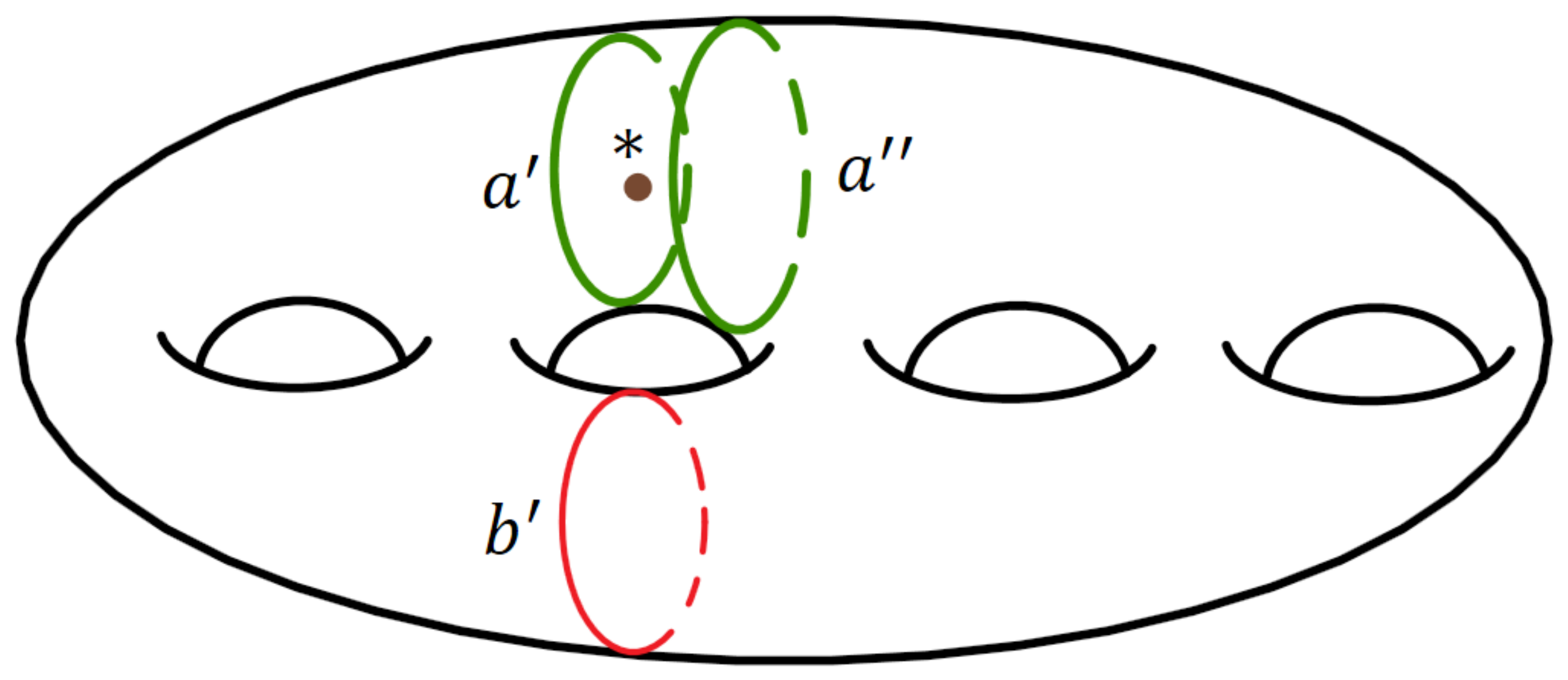}
  \caption{Case 2}\label{figure3}
\endminipage\hfill
\end{figure}
\label{haha}
\end{lem}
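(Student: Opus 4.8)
The plan is to understand the lift $\phi(T_aT_b^{-1})$ of a bounding pair map by exploiting the fact that $\phi$ is a homomorphism into ${\cal I}_{g,1}$ that commutes, up to the projection $T\pi_{g,1}$, with the original bounding pair map. First I would show that $\phi(T_aT_b^{-1})$ is reducible and compute its canonical reduction system. The key input is that $T_aT_b^{-1}$ commutes with a large collection of other mapping classes in ${\cal I}_g$ --- for instance, Dehn twists and bounding pair maps supported away from $a\cup b$, as well as $T_a T_b^{-1}$ itself restricted to various subsurfaces. Applying $\phi$, all these commutation relations are preserved. By Proposition \ref{CRS(x,y)} and Lemma \ref{CRS(xy)}, the canonical reduction system of $\phi(T_aT_b^{-1})$ must be disjoint from (the lifts of) the canonical reduction systems of all these commuting elements. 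Since $T\pi_{g,1}(\phi(T_aT_b^{-1})) = T_aT_b^{-1}$ has canonical reduction system $\{a,b\}$, and the canonical reduction system behaves well under the forgetful map (filling in the puncture), the curves in $\text{CRS}(\phi(T_aT_b^{-1}))$ must project to curves in $\{a,b\}$ or to null-isotopic curves. This forces $\text{CRS}(\phi(T_aT_b^{-1}))$ to consist of curves isotopic on $S_g$ to $a$ or $b$, possibly together with a curve surrounding the puncture; analyzing which configurations of such curves can be pairwise disjoint on $S_{g,1}$ gives exactly the two cases pictured.

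The second step is to pin down the mapping class itself given its reduction system. After cutting along $\text{CRS}(\phi(T_aT_b^{-1}))$, the restriction of a power of $\phi(T_aT_b^{-1})$ to each complementary piece is periodic or pseudo-Anosov. Because $\phi(T_aT_b^{-1})$ lies in the Torelli group, it acts trivially on homology; a pseudo-Anosov piece would have nontrivial homological action on the relevant subsurface (its stretch factor is an eigenvalue of the action on $H_1$ of the subsurface relative to boundary), contradicting Torelli, and similarly a nontrivial periodic piece is incompatible with being in Torelli except in low-complexity pieces where it must be trivial. So the restriction to each non-annular piece is trivial, and $\phi(T_aT_b^{-1})$ is a product of Dehn twists about the curves in its reduction system. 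Matching the homological action (the bounding pair map $T_aT_b^{-1}$ has a specific, nonzero image in $H_1$ of Torelli, or rather its commutator structure) and the projection under $T\pi_{g,1}$ --- which must send this product of twists to $T_aT_b^{-1}$ --- forces the exponents: in Case 1 we get $T_{a'}T_{b'}^{-1}$, and in Case 2, if $a'$ and $a''$ are the two lifts of $a$ flanking the puncture, we get $T_{a'}^nT_{a''}^{1-n}T_{b'}^{-1}$ because the total twisting about the $a$-curves must be $+1$ to project correctly. The separating-curve statement $\phi(T_s) = T_{s'}^nT_{s''}^{1-n}$ follows by the same reasoning applied to a separating Dehn twist, whose canonical reduction system is a single separating curve that can lift to at most two disjoint parallel copies separated by the puncture.

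The main obstacle I expect is the rigidity argument ruling out pseudo-Anosov and nontrivial periodic restrictions on the complementary pieces, and more subtly, controlling the canonical reduction system under the forgetful map $S_{g,1}\to S_g$: I need that $\text{CRS}$ of a lift projects into $\text{CRS}$ of the original class together with possibly a peripheral curve, which requires care because filling in a puncture can make previously essential curves inessential and can merge isotopy classes. I would handle this by using the Birman exact sequence structure directly --- lifting each commuting element of ${\cal I}_g$ through $\phi$, noting the commutation survives, and then running the $\text{CRS}$-disjointness propositions on $S_{g,1}$ --- rather than trying to push $\text{CRS}$ forward. The rest, matching exponents via the homomorphism property and the homological (Torelli) constraint, is then a finite bookkeeping check over the two combinatorial cases.
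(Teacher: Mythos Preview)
Your high-level strategy --- show reducibility, pin down the canonical reduction system via commutation, then identify the multitwist --- matches the paper's. But your mechanism for ruling out pseudo-Anosov pieces is wrong, and this is the heart of the lemma. You write that ``a pseudo-Anosov piece would have nontrivial homological action on the relevant subsurface (its stretch factor is an eigenvalue of the action on $H_1$ \ldots), contradicting Torelli.'' This is false: the stretch factor of a pseudo-Anosov is \emph{not} in general an eigenvalue of its action on $H_1$ (that statement holds for Anosov maps of the torus, not for pseudo-Anosovs on higher-genus surfaces). In fact the Torelli group contains many pseudo-Anosov elements --- the paper itself uses this elsewhere, citing \cite[Corollary~14.3]{BensonMargalit}. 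So ``Torelli'' by itself does not exclude pseudo-Anosov behavior, either globally or on a subsurface piece.

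The paper fills this gap with two different centralizer arguments. First, to exclude $\phi(T_aT_b^{-1})$ being globally pseudo-Anosov: the centralizer of $T_aT_b^{-1}$ in ${\cal I}_g$ contains $\mathbb{Z}^{2g-3}$, so the centralizer of its $\phi$-image does too, but a pseudo-Anosov has virtually cyclic centralizer (McCarthy). Second, and more delicately, to exclude a pseudo-Anosov restriction on a complementary piece $C$ after cutting along the CRS: since $g(C)\ge 1$ one can choose a separating curve $s\subset C$ disjoint from $a\cup b$; then $T_s$ commutes with $T_aT_b^{-1}$, so $\phi(T_s)$ commutes with $\phi(T_aT_b^{-1})$, hence $\phi(T_aT_b^{-1})$ permutes $\text{CRS}(\phi(T_s))$, which (by the same analysis applied to $T_s$) consists of curves on $S_{g,1}$ isotopic to $s$ and therefore lying in $C$. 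So a power of $\phi(T_aT_b^{-1})$ fixes a curve in $C$, contradicting pseudo-Anosov on $C$. A similar centralizer-rank count handles the step showing that curves isotopic to \emph{both} $a$ and $b$ must appear in the CRS. Once you replace your homological claim with these arguments, the rest of your outline (matching exponents via the projection $T\pi_{g,1}$) goes through as you describe.
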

\begin{proof}
Let $T_aT_b^{-1}\in {\cal I}_g$ be a bounding pair map. Since the centralizer of $T_aT_b^{-1}\in {\cal I}_{g}$ contains a copy of $\mathbb{Z}^{2g-3}$ as a subgroup of ${\cal I}_g$, the centralizer of $\phi(T_aT_b^{-1})\in {\cal I}_{g,1}$ contains a copy of $\mathbb{Z}^{2g-3}$ as well. However by \cite[Theorem 1]{mccarthy1982normalizers}, the centralizer of a pseudo-Anosov element is virtually cyclic group. $g>3$ implies that $2g-3>3$. Therefore $\phi(T_aT_b^{-1})\in {\cal I}_{g,1}$ is not pseudo-Anosov. For any curve $\gamma'$ on $S_{g,1}$, denote by $\gamma$ the same curve on $S_g$. We decompose the proof into the following three steps.

\begin{claim}[\bf Step 1]
 CRS$(\phi(T_aT_b^{-1}))$  only contains curves that are isotopic to $a$ or $b$.
\end{claim}
\begin{proof}

Suppose the opposite that there exists $\gamma' \in \text{CRS}(\phi(T_aT_b^{-1}))$ such that $\gamma$ is not isotopic to $a$ or $b$. There are two cases.

{\bf Case 1: $\gamma$ intersect $a$ and $b$.} Since a power of $\phi(T_aT_b^{-1})$ fixes $\gamma'$, a power of $T_aT_b^{-1}$ fixes $\gamma$. By CRS$(T_aT_b^{-1})=\{a,b\}$ and Lemma \ref{noin}, we know that $T_aT_b^{-1}$ does not fix $\gamma$. This is a contradiction.

{\bf Case 2: $\gamma$ does not intersect $a$ and $b$.} In this case by the change of coordinate principle, we can always find a separating curve $c$ such that $i(a,c)=0$, $i(b,c)=0$ and $i(c,\gamma)\neq 0$. Since $T_aT_b^{-1}$ and $T_c$ commute in ${\cal I}_{g}$, the two mapping classes $\phi(T_aT_b^{-1})$ and $\phi(T_c)$ commute in ${\cal I}_{g,1}$. This shows that a power of $\phi(T_c)$ fixes CRS($\phi(T_aT_b^{-1})$); more specifically a power of $\phi(T_c)$ fixes $\gamma'$. However by Lemma \ref{noin}, no power of $T_c$ fixes $\gamma$. This is a contradiction.
\end{proof}

\begin{claim}[\bf Step 2]
CRS$(\phi(T_aT_b^{-1}))$ must contain curves that are isotopic to $a$ and $b$.
\end{claim}
\begin{proof}
Suppose the opposite that CRS$(\phi(T_aT_b^{-1}))$ does not contain a curve $\gamma'$ such that $\gamma$ is isotopic to $a$. Then by Step 1, CRS$(\phi(T_aT_b^{-1}))$ either contains one curve $b'$ isotopic to $b$ or two curves $b'$ and $b''$ both isotopic to $b$. After cutting CRS$(\phi(T_aT_b^{-1}))$, we have a component $C$ that is not a punctured annulus. $C$ homeomorphic to the complement of $b$ in $S_g$.

If $\phi(T_aT_b^{-1})$ is pseudo-Anosov on $C$, then the centralizer of $\phi(T_aT_b^{-1})|_C$ at most contains one copy of $\mathbb{Z}$ by \cite[Theorem 1]{mccarthy1982normalizers}. Combining with $T_{b'}$ and $T_{b''}$,  the centralizer of $\phi(T_aT_b^{-1})$ at most contains one copy of $\mathbb{Z}^3$ as a subgroup. This contradicts the fact that the centralizer of $\phi(T_aT_b^{-1})$ contains a subgroup $\mathbb{Z}^{2g-3}$ as a subgroup because $2g-3>3$. Here we need to use $g\ge 4$. Therefore $\phi(T_aT_b^{-1})$ is identity on $C$. This contradicts the fact that $T_aT_b^{-1}$ is not identity on $C$.
\end{proof}

\begin{claim}[\bf Step 3]
Either $\phi(T_aT_b^{-1})=T_{a'}T_{b'}^{-1}$ as in case 1 or there exists an integer $n$ such that $\phi(T_aT_b^{-1})=T_{a'}^{n}T_{a''}^{1-n}T_{b'}^{-1}$ as in case 2. 
\end{claim}
\begin{proof}
Suppose that $\phi(T_aT_b^{-1})$ is pseudo-Anosov on a component $C$ after cutting out CRS$(\phi(T_aT_b^{-1}))$ from $S_{g,1}$. Since $g(C)\ge 1$, there exists a separating curve $s$ on $C$ such that $\phi(T_s)$ commutes with $\phi(T_aT_b^{-1})$. Therefore $\phi(T_aT_b^{-1})$ fixes CRS($\phi(T_s))$, which is either one curve or two curves isotopic to $s$. Thus a power of $\phi(T_aT_b^{-1})$ fixes curves on $C$, which means that $\phi(T_aT_b^{-1})$ is not pseudo-Anosov on $C$. Therefore, $\phi(T_aT_b^{-1})$ is not  pseudo-Anosov on each of the components. By the canonical form of a mapping class, a power of $\phi(T_aT_b^{-1})$ is a product of Dehn twists about CRS$(\phi(T_aT_b^{-1}))$. By the fact that $\phi(T_aT_b^{-1})$ is a lift of $T_aT_b^{-1}$, the lemma holds.
\end{proof}
The same argument works for $T_s$ the Dehn twist about a separating curve $s$.
\end{proof}

When $n=0$ or $n=1$, we have that $(T_{a'})^n(T_{a''})^{1-n}T_{b'}=T_{a''}'T_{b'}^{-1}$ or $(T_{a'})^n(T_{a''})^{1-n}T_{b'}=T_{a'}'T_{b'}^{-1}$. Therefore, we can combine the results to get that $\phi(T_aT_b^{-1})=(T_{a'})^n(T_{a''})^{1-n}T_{b'}$. In $\phi(T_aT_b^{-1})=(T_{a'})^n(T_{a''})^{1-n}T_{b'}$, denote $(T_{a'})^n(T_{a''})^{1-n}$ by the \emph{$a$ component of $\phi(T_aT_b^{-1})$}. Notice that by symmetry, the $b$ component of $\phi(T_aT_b^{-1})$ could also be a product of Dehn twists. In the following lemma, we will prove that the $a$ component of $\phi(T_aT_b^{-1})$ does not depend on the choice of $b$.

\begin{lem}
For two bounding pairs $\{a,b\}$ and $\{a,c\}$, the $a$ component of $\phi(T_aT_b^{-1})$ is the same as the $a$ component of $\phi(T_aT_c^{-1})$.
\label{depend}
 \end{lem}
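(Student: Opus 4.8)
The plan is to prove that the $a$ component is independent of the second curve first in the special case $i(b,c)=0$, and then to reduce the general case to this one by joining $b$ to $c$ through a finite chain of bounding-pair partners of $a$ in which consecutive curves are disjoint. Throughout, for any bounding pair $\{a,d\}$ I write $\phi(T_aT_d^{-1})=A(d)B(d)$ as provided by Lemma \ref{haha} (applied to an arbitrary bounding pair): $A(d)$ is the $a$ component, a product of powers of Dehn twists about curves of $S_{g,1}$ that are isotopic to $a$ on $S_g$, and $B(d)$ is a product of powers of Dehn twists about curves isotopic to $d$ on $S_g$. The goal is to show $A(b)=A(c)$.

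\textbf{The case $i(b,c)=0$.} Since $\{a,b\}$ and $\{a,c\}$ are bounding pairs we have $[b]=\pm[a]=\pm[c]$ in $H_1(S_g;\mathbb Z)$, so either $b$ is isotopic to $c$, in which case $T_aT_b^{-1}=T_aT_c^{-1}$ and there is nothing to prove, or $\{b,c\}$ is itself a bounding pair, which we now assume. As $a,b,c$ are pairwise disjoint, $T_a,T_b,T_c$ pairwise commute, hence $T_aT_b^{-1}$, $T_aT_c^{-1}$ and $T_cT_b^{-1}=(T_aT_c^{-1})^{-1}(T_aT_b^{-1})$ are three pairwise commuting elements of ${\cal I}_g$, and so their images under $\phi$ pairwise commute. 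Write $\phi(T_cT_b^{-1})=\widehat C\,\widehat B$ with $\widehat C$ supported on curves isotopic to $c$ and $\widehat B$ on curves isotopic to $b$. By Proposition \ref{CRS(x,y)} the canonical reduction systems of $\phi(T_aT_b^{-1})$, $\phi(T_aT_c^{-1})$ and $\phi(T_cT_b^{-1})$ are pairwise non-intersecting; since each reduction system is in addition a family of disjoint curves, all of the curves occurring in $A(b),B(b),A(c),B(c),\widehat C,\widehat B$ can be realized simultaneously disjointly on $S_{g,1}$, so the Dehn twists about them generate a free abelian group $F$ on the distinct isotopy classes involved.

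In $F$ (written additively) the telescoping identity $T_aT_b^{-1}=(T_aT_c^{-1})(T_cT_b^{-1})$ reads $A(b)+B(b)=A(c)+B(c)+\widehat C+\widehat B$. The basis of $F$ partitions into three parts according to whether the corresponding curve is isotopic on $S_g$ to $a$, to $b$, or to $c$; these three types are genuinely distinct because $a$ is isotopic to neither $b$ nor $c$ on $S_g$. Projecting the identity onto the coordinates indexed by curves isotopic to $a$ kills $B(b),B(c),\widehat C,\widehat B$ and gives $A(b)=A(c)$, as desired.

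\textbf{The general case.} It now suffices to produce a chain $b=d_0,d_1,\dots,d_k=c$ of bounding-pair partners of $a$ with $i(d_j,d_{j+1})=0$ for every $j$, since applying the previous case successively then yields $A(b)=A(d_1)=\cdots=A(c)$. Equivalently, one must show that the graph ${\cal G}_a$ whose vertices are the isotopy classes of curves that form a bounding pair with $a$, with an edge between two vertices when they are disjoint, is connected. This is a purely topological statement about $S_g$: the partners of $a$ are precisely the non-boundary-parallel curves of $S_g\setminus a$ lying in the homology classes $\pm[a]$, and connectivity follows by a change-of-coordinates argument, organizing the partners of $a$ according to the genus splitting $\{h,g-1-h\}$ they induce and interpolating, between partners of adjacent genus types, a disjoint partner of the intermediate type; the hypothesis $g>3$ supplies the room needed to carry this out. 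I expect this connectivity to be the part of the argument requiring the most care: the standard surgery that proves connectivity of the complex of nonseparating curves does not preserve the homology class $\pm[a]$, so one cannot simply reduce geometric intersection numbers and must instead track the genus splitting and move through it by handle moves. The case $i(b,c)=0$, by contrast, is essentially formal once Lemma \ref{haha} is in hand.
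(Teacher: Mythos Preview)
Your approach matches the paper's: handle the disjoint case first, then connect $b$ to $c$ by a chain of pairwise disjoint partners of $a$. For the disjoint case, your free-abelian projection argument is a clean variant of what the paper does; the paper instead invokes Lemma~\ref{CRS(xy)} to conclude that $\text{CRS}(\phi(T_aT_b^{-1}))\triangle\text{CRS}(\phi(T_aT_c^{-1}))\subset\text{CRS}(\phi(T_cT_b^{-1}))$, hence the symmetric difference contains no curve isotopic to $a$ and the $a$-components agree. Your version sidesteps Lemma~\ref{CRS(xy)} entirely, using only Proposition~\ref{CRS(x,y)} and the fact that multitwists on disjoint curves form a free abelian group, which is a nice simplification.

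For the general case the paper does not attempt an ad hoc connectivity argument: it simply cites Putman's result on the connectivity of the complex of homologous curves (\cite{putman2006note} in the paper). Your sketch via genus splittings is plausible but, as you yourself flag, not complete; rather than working it out from scratch you should invoke the known connectivity theorem, which gives exactly the chain $b=b_1,\dots,b_n=c$ with $i(b_i,b_{i+1})=0$ and $i(a,b_i)=0$ that you need.
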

\begin{proof}
If $b,c$ are disjoint, $\phi(T_aT_b^{-1})$ and $\phi(T_aT_c^{-1})$ commute. By Lemma \ref{CRS(xy)}, we have 
\[
\text{CRS}(\phi(T_aT_b^{-1}))\triangle \text{CRS}(\phi(T_aT_c^{-1})^{-1})\subset\text{CRS}(\phi(T_aT_b^{-1}))\phi(T_aT_c^{-1})^{-1}).
\]
 However $\phi(T_aT_b^{-1}))\phi(T_aT_c^{-1})^{-1}=\phi(T_cT_b^{-1})$, we have that $\text{CRS}(\phi(T_aT_b^{-1}))\phi(T_aT_c^{-1})^{-1})$ only contains curves that are isotopic to $b$ or $c$. So $\text{CRS}(\phi(T_aT_b^{-1}))\triangle \text{CRS}(\phi(T_aT_c^{-1})^{-1})$ does not contain curves isotopic to $a$. This shows that the $a$ components of $\phi(T_aT_b^{-1})$ and $\phi(T_aT_c^{-1})$ are the same so that they can cancel each other through multiplication.

When $b,c$ intersect, there are a series of curves $\{b_1=b,b_2,...,b_n=c\}$ such that $i(b_i, b_{i+1})=0$ and $i(a,b_i)=0$ for all $1 \le i \le n-1$. This fact can be deduced from the connectivity of the complex of homologous curves, e.g. see \cite{putman2006note}. Therefore, the $a$ components of $\phi(T_aT_b^{-1})$ and $\phi(T_aT_c^{-1})$ are the same.

\end{proof}

We denote by the capital letter $A$ the subset of curves in CRS$(\phi(T_aT_b^{-1}))$ that are isotopic to $a$. By Lemma \ref{depend}, $A$ only depends on the curve $a$. It can be a one-element set or a two-element set.

\begin{lem}
If $i(a,b)=0$, then $A$ is disjoint from $B$.
\end{lem}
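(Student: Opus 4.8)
The plan is to prove the lemma by splitting into two cases according to whether or not $[a]=\pm[b]$ in $H_1(S_g;\mathbb{Z})$. Recall from Section 2.1 that ``$A$ is disjoint from $B$'' means $i(x,y)=0$ for every $x\in A$ and $y\in B$, and that a canonical reduction system is, by definition, a set of pairwise disjoint curves. In each case I want to exhibit $A$ and $B$ as sub-collections of canonical reduction systems that cannot intersect: either of a single canonical reduction system, or of the canonical reduction systems of two commuting elements of ${\cal I}_{g,1}$.

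\textbf{Case 1: $[a]=\pm[b]$.} If $a$ is isotopic to $b$ there is nothing to prove. Otherwise $a\cup b$ separates $S_g$, so $\{a,b\}$ is itself a bounding pair as in Figure \ref{BP}. Since $\text{CRS}(h)=\text{CRS}(h^{-1})$ for any $h$, and since by Lemma \ref{depend} the $a$-component of $\phi(T_aT_b^{-1})$ does not depend on the choice of bounding-pair partner of $a$ (and symmetrically for $b$, with partner $a$), both $A$ and $B$ are sub-collections of the single set $\text{CRS}(\phi(T_aT_b^{-1}))$. As this set consists of pairwise disjoint curves, $A$ and $B$ do not intersect.

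\textbf{Case 2: $[a]\neq\pm[b]$.} Here I would first produce curves $c$ and $d$ on $S_g$ such that $\{a,d\}$ and $\{b,c\}$ are bounding pairs and the four curves $a,b,c,d$ are pairwise disjoint. Granting this, $T_aT_d^{-1}$ and $T_bT_c^{-1}$ have disjoint supports, hence commute in ${\cal I}_g$, so $\phi(T_aT_d^{-1})$ and $\phi(T_bT_c^{-1})$ commute in ${\cal I}_{g,1}$; by Proposition \ref{CRS(x,y)}, $\text{CRS}(\phi(T_aT_d^{-1}))$ and $\text{CRS}(\phi(T_bT_c^{-1}))$ do not intersect. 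By Lemma \ref{depend}, the $a$-component of $\phi(T_aT_b^{-1})$ equals that of $\phi(T_aT_d^{-1})$, so $A\subseteq \text{CRS}(\phi(T_aT_d^{-1}))$; symmetrically $B\subseteq \text{CRS}(\phi(T_bT_c^{-1}))$. Hence $A$ and $B$ do not intersect.

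The step requiring care — and the only place where the standing assumption $g>3$ intervenes in this lemma — is the construction of the pairwise-disjoint configuration $a,b,c,d$ in Case 2. Cutting $S_g$ along $a$ yields $\Sigma_a\cong S_{g-1}$ with two boundary circles $a_1,a_2$; since $i(a,b)=0$ the curve $b$ lies in $\Sigma_a$, and since $b$ is nonseparating in $S_g$ with $[b]\neq\pm[a]$, one checks that $b$ is nonseparating in $\Sigma_a$ and does not separate $a_1$ from $a_2$. Cutting $\Sigma_a$ further along $b$ gives the connected surface of genus $g-2$ with four boundary circles $a_1,a_2,b_1,b_2$; inside it I would choose $d$ cutting off a genus-one subsurface with boundary $\{a_1,d\}$, and then $c$ cutting off a genus-one subsurface with boundary $\{b_1,c\}$ inside the remaining piece, whose genus is $g-3\ge 1$. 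An Euler-characteristic count confirms that $\{a,d\}$ and $\{b,c\}$ are honest bounding pairs, that $c\not\simeq b$ and $d\not\simeq a$, and that $g\ge 4$ is exactly the inequality needed to accommodate both genus-one pieces. Alternatively, one can avoid the explicit construction and deduce Case 2 from the connectivity of the complex of curves homologous to a fixed curve and disjoint from it, exactly as in the proof of Lemma \ref{depend}. The main obstacle is therefore this ``room'' estimate; the group-theoretic content of the lemma is entirely carried by Proposition \ref{CRS(x,y)} and Lemma \ref{depend}.
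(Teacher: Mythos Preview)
Your proof is correct and follows essentially the same route as the paper: the case split into ``$a,b$ bound'' versus ``$a,b$ do not bound'' (equivalently, your homological dichotomy $[a]=\pm[b]$ or not), with the first case handled by the structure of $\text{CRS}(\phi(T_aT_b^{-1}))$ from Lemma~\ref{haha}, and the second by producing two disjoint bounding pairs $\{a,d\}$, $\{b,c\}$ so that the corresponding lifted maps commute and Proposition~\ref{CRS(x,y)} applies. Your write-up is in fact more explicit than the paper's, which relegates the construction of $c,d$ to a figure and simply asserts ``$g\ge 4$ is needed here''; your cut-and-count argument in the complement of $a\cup b$ makes precise why genus $g-2\ge 2$ is exactly what is required.
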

\begin{proof}
Suppose that $a,b$ are nonseparating. The case of separating curves are the same. If $a,b$ bound, then by Lemma \ref{haha}, $A$ and $B$ are disjoint. If $a,b$ do not bound, then there are curves $c,d$ such that they form the following configuration. 
\begin{figure}[H]
\centering
\includegraphics[scale=0.25]{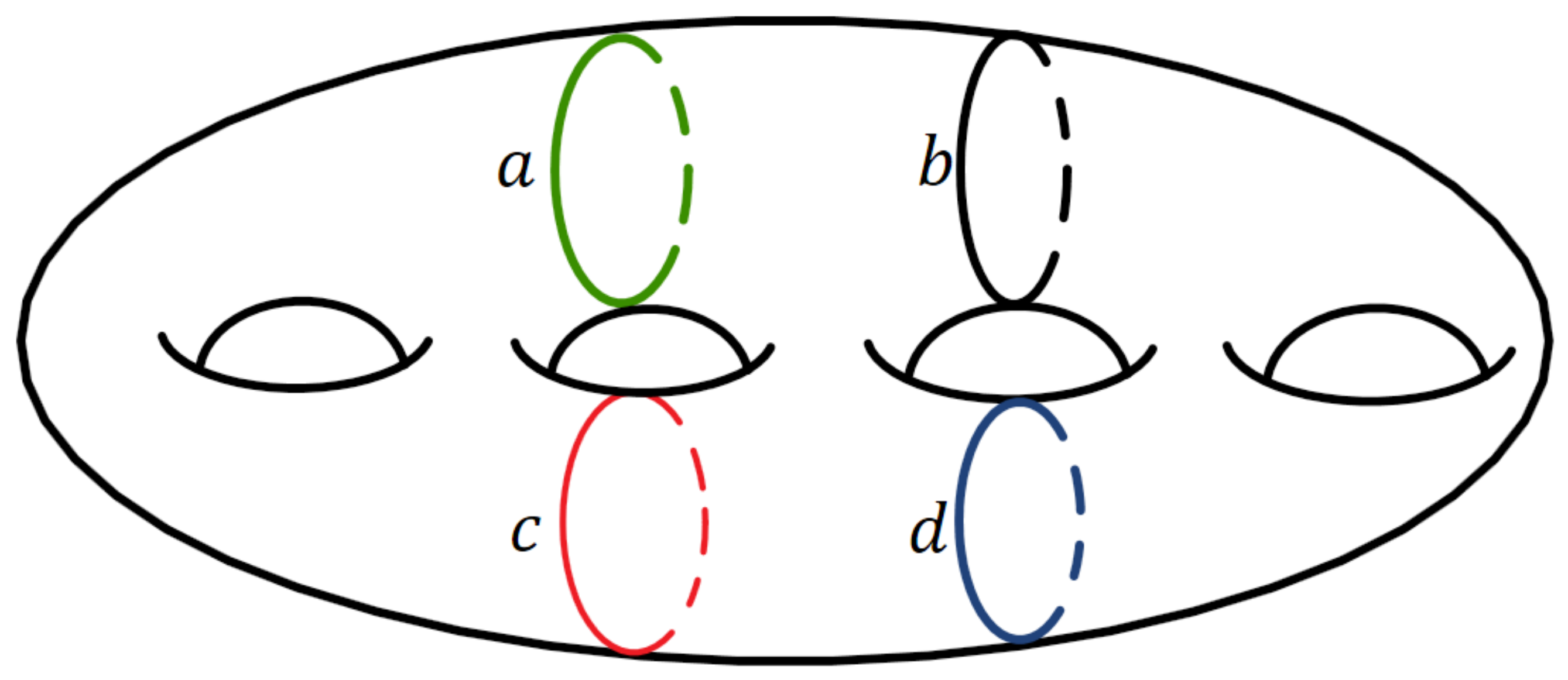}
\end{figure}
$g\ge 4$ is needed here. Since $\phi(T_aT_c^{-1})$ and $\phi(T_bT_d^{-1})$ commute, their canonical reduction systems do not intersect by Corollary \ref{CRS(x,y)}. Therefore $A$ and $B$ are disjoint.
\end{proof}

\subsection{A nonsplitting lemma for the braid group}
Let $D_n$ be an $n$-punctured 2-disk. The n-strand \emph{pure braid group} is denoted by $PB_n$, i.e. the pure mapping class group of $D_n$ fixing the $n$ punctures pointwise. In this subsection, we prove a nonsplitting lemma for the braid group that will be used in the proof of Theorem \ref{main}.

\begin{lem}
Let ${\cal F}: PB_4\to PB_3$ be the forgetful map forgetting the 4th punctures. There is no homomorphism ${\cal G}:PB_3\to PB_4$ such that Dehn twists map to Dehn twists, the center maps to the center and ${\cal F}\circ {\cal G}=id$.
\label{nonsplitting}
\end{lem}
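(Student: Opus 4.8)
The plan is to work entirely in the abelianizations. For $n\ge 3$ one has $H_1(PB_n;\mathbb Z)\cong \mathbb Z^{\binom n2}$ with basis the classes $[A_{ij}]$ of the standard generators, where $A_{ij}=T_{c_{ij}}$ is the Dehn twist about a curve $c_{ij}$ enclosing punctures $i,j$; the forgetful map $\mathcal F$ induces the projection $\bar{\mathcal F}\colon\mathbb Z^{\binom 42}\to\mathbb Z^{\binom 32}$ that kills $[A_{14}],[A_{24}],[A_{34}]$. A useful fact is that for any essential simple closed curve $\gamma$ in $D_n$ enclosing a set $S$ of punctures, $T_\gamma$ is conjugate in $B_n$ to the standard twist about $c_S$ (change of coordinates, with underlying permutation fixing $S$ setwise), and since $B_n$ acts on $H_1(PB_n)$ just by permuting the basis according to its action on pairs, $[T_\gamma]=\sum_{k<l\in S}[A_{kl}]$ depends only on $S$. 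Assume $\mathcal G$ exists.

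First I would pin down $\mathcal G$ on the center. The center of $PB_n$ is infinite cyclic generated by the full twist $Z_n=T_{\partial D_n}$, and the lantern relation in $D_n$ (viewed as a sphere with holes $p_1,\dots,p_n,\partial$, the $p_i$ contributing trivial twists) gives $[Z_n]=\sum_{k<l}[A_{kl}]$; in particular $Z_3=A_{12}A_{13}A_{23}$ up to order. Since $\mathcal F$ is surjective, $\mathcal F(Z_4)$ is central in $PB_3$, hence a power of $Z_3$, and comparing abelianizations forces $\mathcal F(Z_4)=Z_3$. Because $\mathcal G$ carries the center into the center, $\mathcal G(Z_3)=Z_4^k$ for some $k$, and applying $\mathcal F$ gives $Z_3=Z_3^k$, so $k=1$ and $\mathcal G(Z_3)=Z_4$.

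Next I would constrain the curves $\gamma_{ij}$ defined by $\mathcal G(A_{ij})=T_{\gamma_{ij}}$. Since $\mathcal F(T_{\gamma_{ij}})=A_{ij}\ne 1$, the curve $\gamma_{ij}$ is essential, so it encloses a set $S_{ij}$ of at least two punctures and $[\gamma_{ij}]=\sum_{k<l\in S_{ij}}[A_{kl}]$ by the fact above. The condition $\bar{\mathcal F}[\gamma_{ij}]=[A_{ij}]$ then forces $S_{ij}\cap\{1,2,3\}=\{i,j\}$, so $S_{ij}=\{i,j\}$ or $\{i,j,4\}$; writing $x_{ij}\in\{0,1\}$ for the indicator of the second possibility, $[\gamma_{ij}]=[A_{ij}]+x_{ij}\bigl([A_{i4}]+[A_{j4}]\bigr)$. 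Finally, apply $\mathcal G$ to the lantern relation $Z_3=A_{12}A_{13}A_{23}$ and pass to $H_1(PB_4)$: $[Z_4]=[\gamma_{12}]+[\gamma_{13}]+[\gamma_{23}]$. Matching the coefficients of $[A_{14}]$, $[A_{24}]$, $[A_{34}]$ gives $x_{12}+x_{13}=1$, $x_{12}+x_{23}=1$, $x_{13}+x_{23}=1$, and summing yields $2(x_{12}+x_{13}+x_{23})=3$, a contradiction. Hence no such $\mathcal G$ exists.

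The step I expect to be the real pivot — and where both hypotheses on $\mathcal G$ are genuinely used — is forcing $\mathcal G(Z_3)=Z_4$: dropping "center to center" allows the spectator inclusion $PB_3\hookrightarrow PB_4$ as a section, while dropping "Dehn twists to Dehn twists" removes the restriction to the two-case form of $[\gamma_{ij}]$. Everything else is a short abelianized computation, so the main care needed is in justifying the conjugacy/change-of-coordinates statement that makes $[T_\gamma]$ depend only on the enclosed puncture set.
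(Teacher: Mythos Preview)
Your proof is correct and takes a genuinely different route from the paper's. The paper argues geometrically: starting from the lantern relation $T_aT_bT_c=T_d$ in $PB_3$, it lifts to $T_{a'}T_{b'}T_{c'}=T_{d'}$ in $PB_4$ with $d'=\partial D_4$, then uses Thurston's construction to force $i(a',b')=2$ (otherwise $T_{a'}T_{b'}$ would have a pseudo-Anosov piece and could not equal the multitwist $T_{d'}T_{c'}^{-1}$), and finally runs a three-case analysis on whether $a',b'$ enclose two or three punctures, deriving a contradiction in each case by comparing canonical reduction systems of two expressions for $T_{a'}T_{b'}$. You instead linearize the problem: the linking-number description of $H_1(PB_n;\mathbb Z)\cong\mathbb Z^{\binom n2}$ makes $[T_\gamma]$ depend only on the enclosed puncture set, so the hypotheses pin down $\mathcal G(Z_3)=Z_4$ and reduce each $[\mathcal G(A_{ij})]$ to one binary choice $x_{ij}$; equating $[Z_4]$ with $\sum[\gamma_{ij}]$ then yields the parity obstruction $2(x_{12}+x_{13}+x_{23})=3$.

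Your argument is shorter and more elementary---it avoids Thurston's construction and canonical reduction systems entirely, and the contradiction is a one-line count rather than a case split. The paper's approach, on the other hand, stays within the CRS framework used throughout the rest of the article and extracts finer geometric information (the actual intersection pattern of the lifted curves), which is more in keeping with how the lemma is later applied. One small point worth making explicit in your write-up: you only need the relation $[Z_3]=[A_{12}]+[A_{13}]+[A_{23}]$ at the level of $H_1$, so you can sidestep any care about the ordering in the group-level lantern relation.
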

\begin{proof}
Suppose the opposite that we have ${\cal G}:PB_3\to PB_4$ such that Dehn twists map to Dehn twists, the center maps to the center and ${\cal F}\circ {\cal G}=id$. Let $c$ be a simple closed curve on $D_3$ and we call $c'$ the lift on $D_4$ such that ${\cal G}(T_c)=T_{c'}$. In the figure below, the lantern relation gives $T_aT_bT_c=T_d\in PB_3$. Therefore we have $T_{a'}T_{b'}T_{c'}=T_{d'}\in PB_4$. Because ${\cal G}$ maps the center to the center, $d'$ is the boundary curve of $D_n$. Since $a,b,c$ do not intersect $d$, we have that $T_a,T_b,T_c$ commute with $T_d$.

\begin{center}
\includegraphics[scale=0.25]{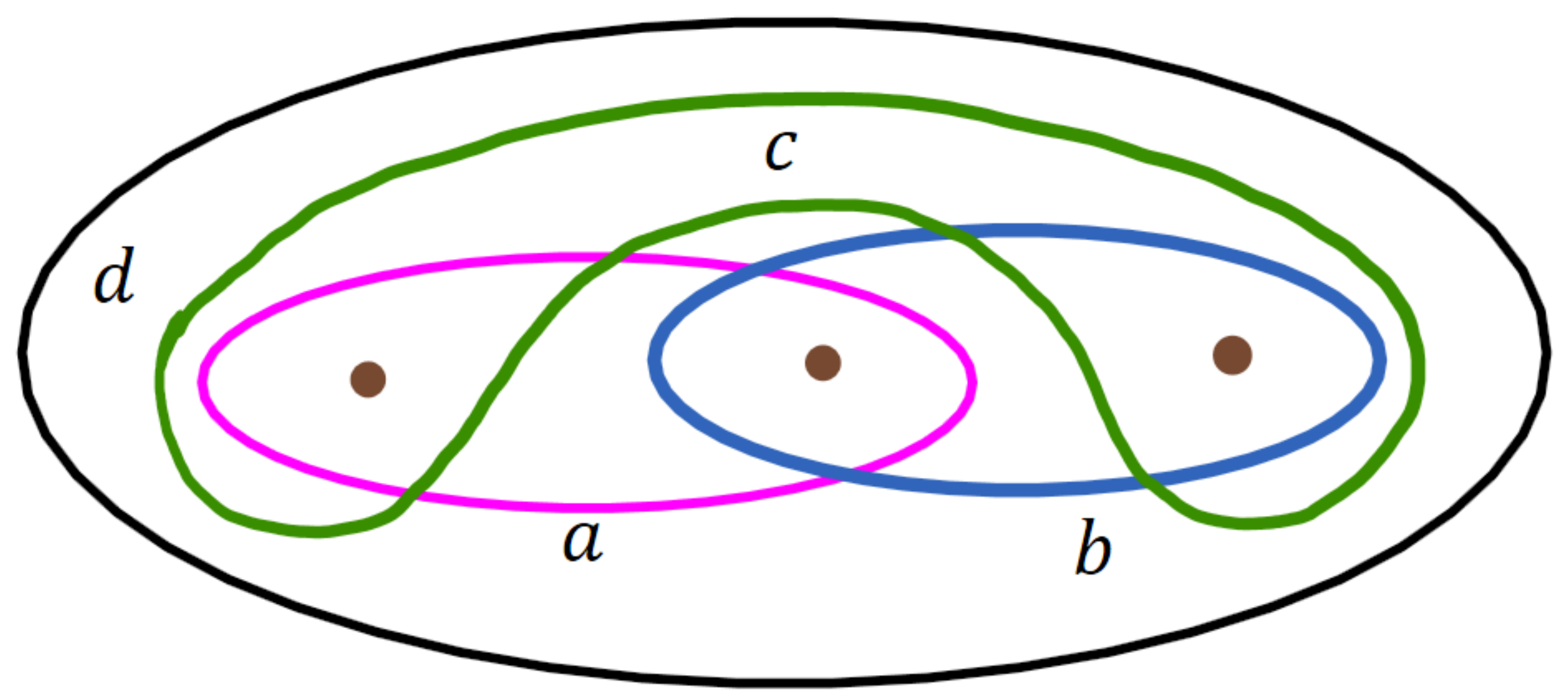}
\end{center}
If $i(a',b')>2$, then $T_{a'}T_{b'}$ is pseudo-Anosov on some subspace of $D_4$ by Thurston's construction, e.g. see  \cite[Proposition 2.13]{chen4}. Therefore, $T_{a'}T_{b'}=T_{d'}T_{c''}^{-1}$ is not a multitwist. So $i(a',b')=2$. Every curve in $D_n$ surrounds several points. For example, $a\subset D_3$  surrounds 2 points. There are several cases we need to concern about the number of surrounding points.

\begin{figure}[H]
\minipage{0.32\textwidth}
\includegraphics[width=\linewidth]{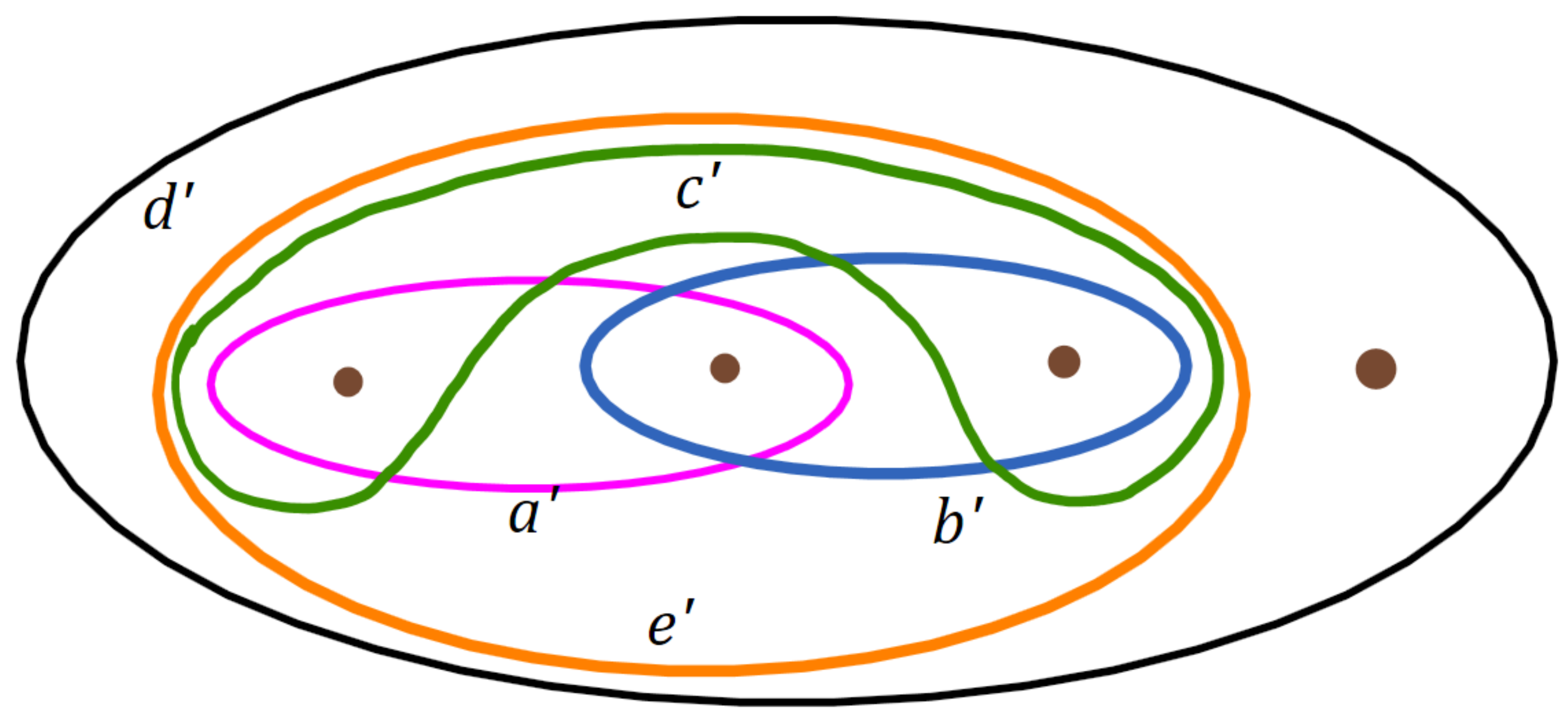}
\caption{Case 1}\label{case1}
\endminipage\hfill
\minipage{0.32\textwidth}
  \includegraphics[width=\linewidth]{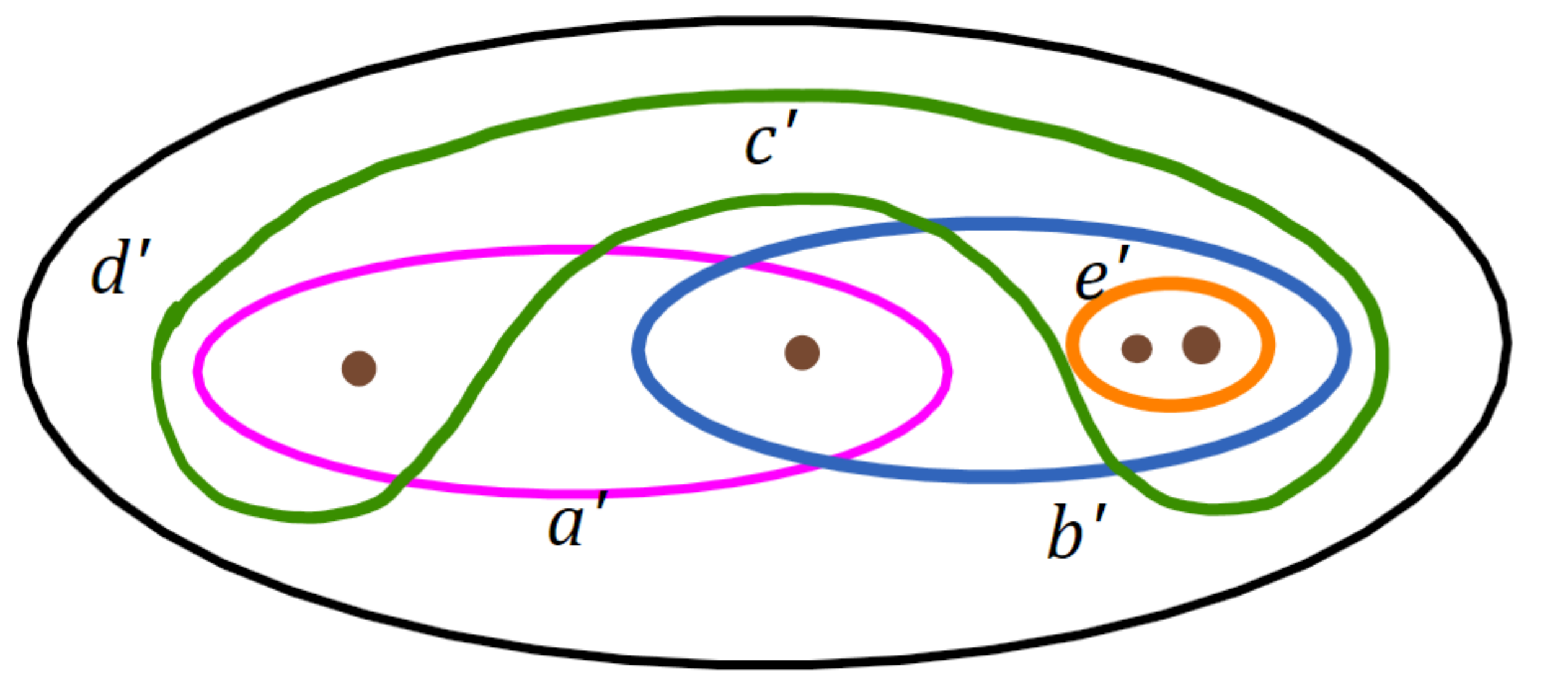}
  \caption{Case 2}\label{case2}
\endminipage\hfill
\minipage{0.32\textwidth}
  \includegraphics[width=\linewidth]{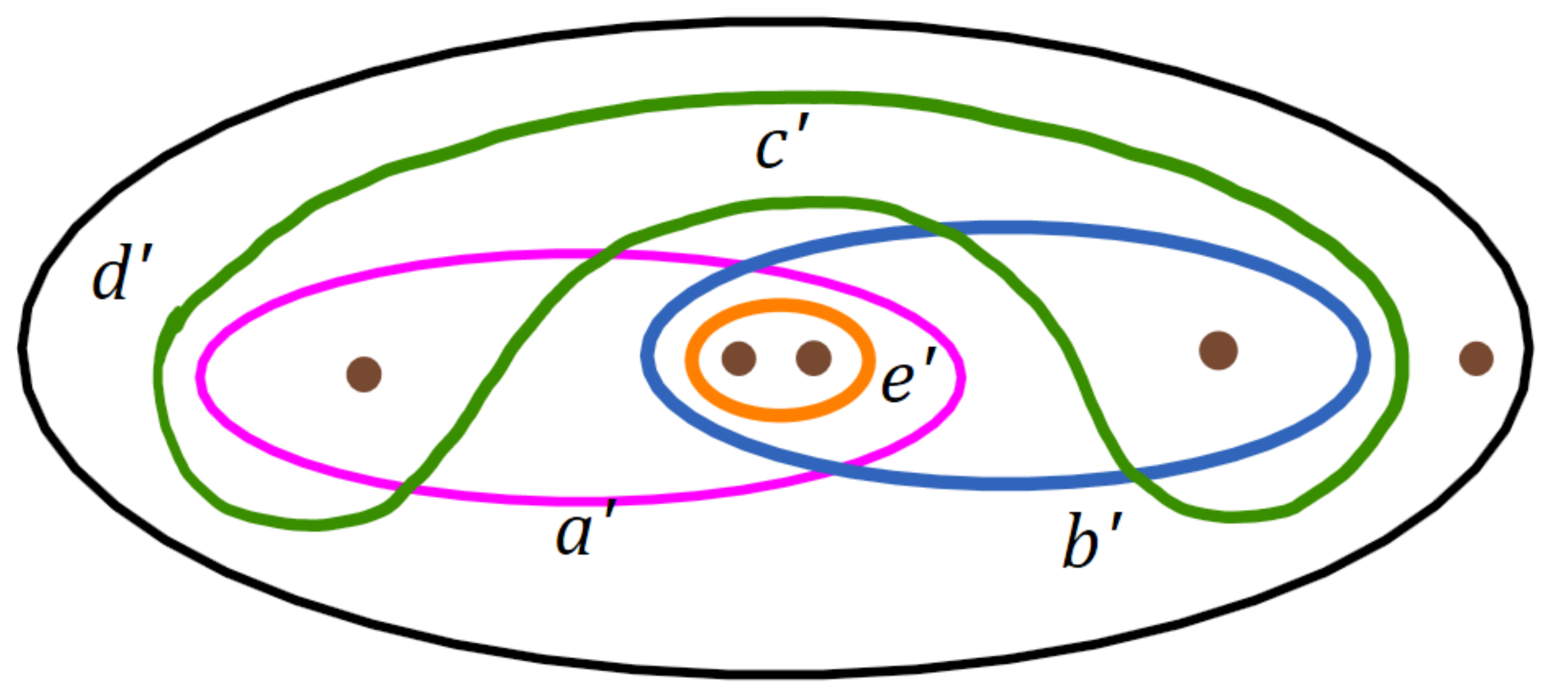}
    \caption{Case 3}\label{case3}
\endminipage\hfill
\end{figure}

{\bf Case 1: $a'$ bounds 2 points and $b'$ bounds 2 points.} Then we have $T_{a'}T_{b'}=T_{e'}T_{c'}^{-1}$ by the lantern relation as is shown in Figure \ref{case1}. We also have the relation $T_{a'}T_{b'}=T_{d'}T_{c'}^{-1}$ from the lift of the relation $T_aT_bT_c=T_d\in PB_3$. However CRS$(T_{d'}T_{c'}^{-1})=\{c'\}$ $\neq$ CRS$(T_{e'}T_{c'}^{-1})=\{e',c'\}$. This is a contradiction.\\

{\bf Case 2: $a'$ bounds 2 points and $b'$ bounds 3 points.} Then we have $T_{a'}T_{b'}=T_{d'}T_{e'}T_{c'}^{-1}$ by the lantern relation as is shown in Figure \ref{case2}. However $\#$CRS$(T_{d'}T_{e'}T_{c'}^{-1})=2>1=\#$CRS$(T_{d'}T_{c'}^{-1})$. This is a contradiction.\\

{\bf Case 3: $a'$ bounds 3 points and $b'$ bounds 3 points.} Then we have $T_{a'}T_{b'}=T_{d'}T_{e'}T_{c'}^{-1}$ by the lantern relation as is shown in Figure \ref{case3}. We have $\#$CRS$(T_{d'}T_{e'}T_{c'}^{-1})=2>1=\#$CRS$(T_{d'}T_{c'}^{-1})$. This is a contradiction.

\end{proof}

\subsection{Proof of Theorem \ref{main}}
In this proof, we do a case study on the possibilities of $\phi(T_aT_e^{-1})$ for a bounding pair map $T_aT_e^{-1}$. Case 1 is when the $a$ component is not a single Dehn twist. We reach a contradiction by the lantern relation. Case 2 is when the component of every curve is a single Dehn twist, we use Lemma \ref{nonsplitting} to cause contradiction.
\begin{proof}[\bf Proof of Theorem \ref{main}] 

We break our discussion into the following two cases.\\

{\bf\boldmath Case 1: there is a bounding pair map $T_aT_e^{-1}$ such that  
\[
\phi(T_aT_e^{-1})=(T_{a'})^n(T_{a''})^{1-n}T_{e'}^{-1}\text{  where $n\neq 0,1$.} \]
}
There exist curves $b,c,d$ such that $a,b,c,d$ form a 4-boundary disk as Figure \ref{figureA}. We need $g>3$ here.
\begin{figure}[H]
\minipage{0.40\textwidth}
  \includegraphics[width=\linewidth]{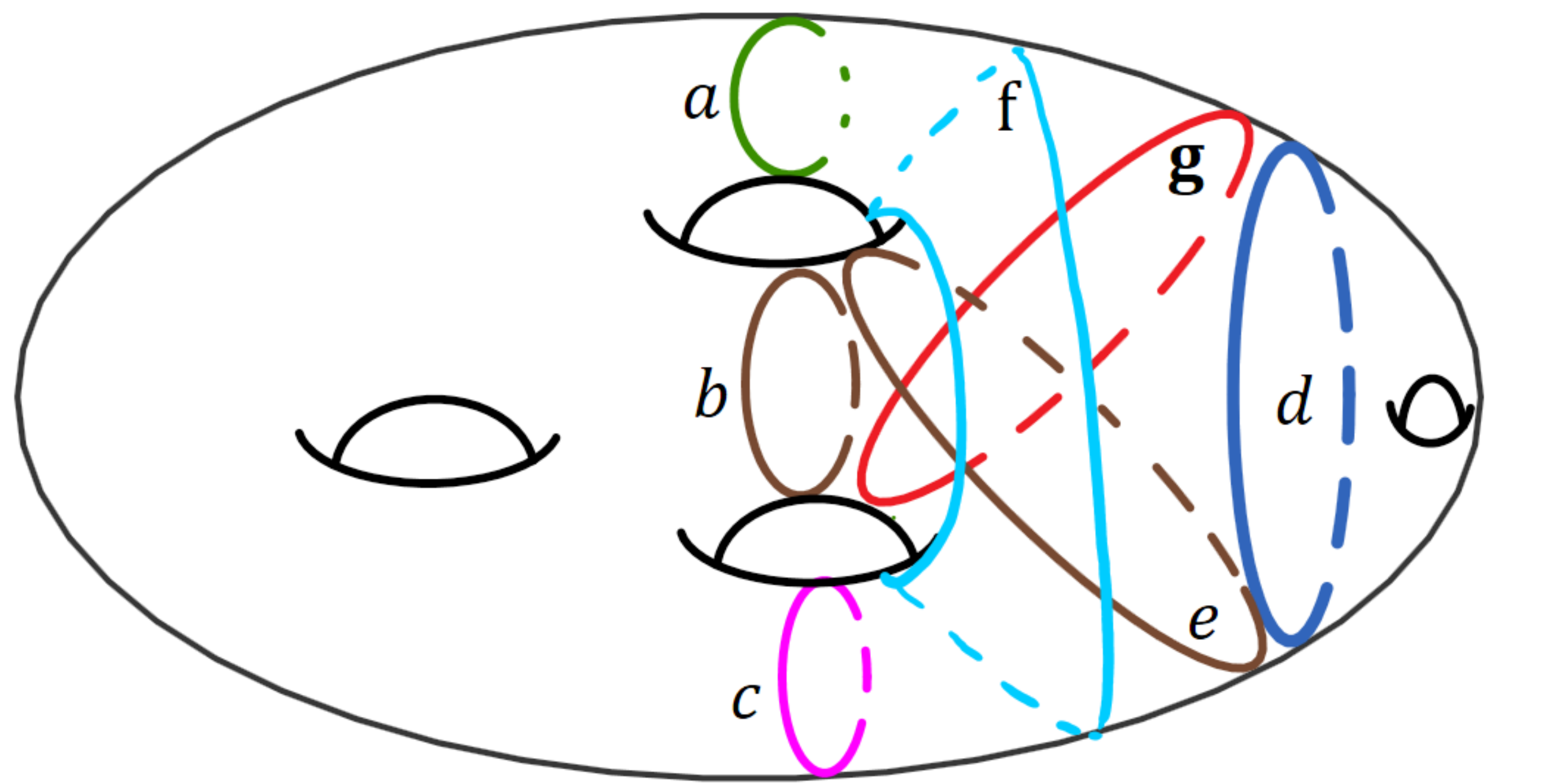}
  \caption{On $S_g$}
  \label{figureA}
\endminipage\hfill
\minipage{0.40\textwidth}
  \includegraphics[width=\linewidth]{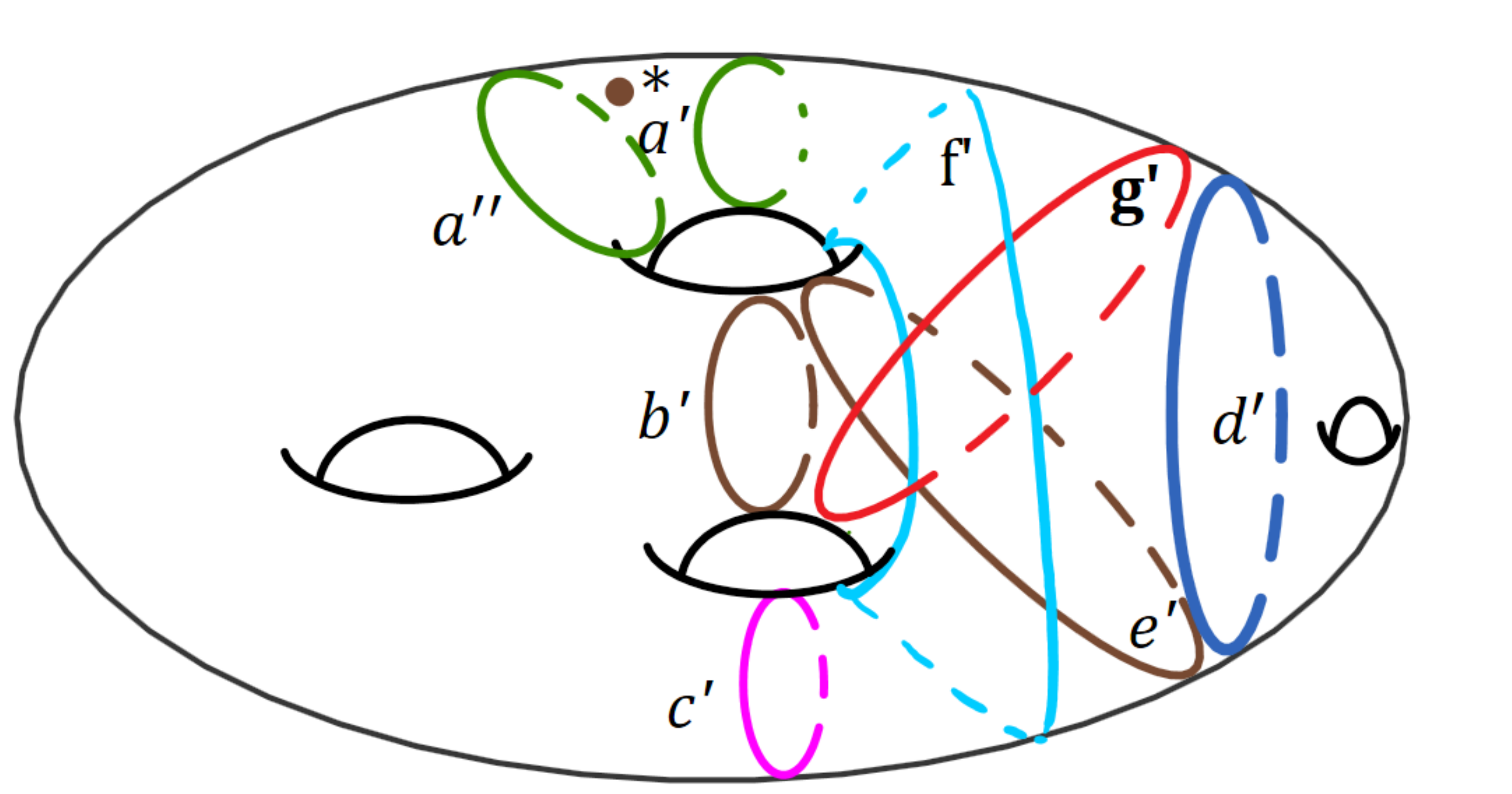}
  \caption{On $S_{g,1}$}
  \label{figureBe}
\endminipage\hfill
\end{figure}

There are two more curves $f,g$ such that we have the lantern relation $T_aT_e^{-1}T_bT_f^{-1}T_cT_g^{-1}T_d=1$. The lifts $\{b',c',d',f',g'\}$ of  $\{b,c,d,f,g\}$ do not intersect $a',a''$ as in Figure \ref{figureBe}. After applying $\phi$, we have 
\[
(T_{a'})^n(T_{a''})^{1-n}T_{e'}^{-1}T_{b'}T_{f'}^{-1}T_{c'}T_g^{-1}T_{d'}=1.
\] Since $T_{a'}T_{e'}^{-1}T_{b'}T_{f'}^{-1}T_{c'}T_{g'}^{-1}T_{d'}=1$ by the lantern relation on $S_{g,1}$, we have $(T_{a'})^n(T_{a''})^{1-n}=T_{a'}$. It means that $n=1$, which contradicts our assumption on $n$. This proof also works for the Dehn twist $T_s$ about a  separating curve $s$.\\

{\bf\boldmath Case 2:  for any bounding pair map $T_aT_e^{-1}$, we have $\phi(T_aT_e^{-1})=T_{a'}T_{e'}^{-1}$ and for any Dehn twist $T_s$ about a separating curve $s$, we have $\phi(T_s)=T_{s'}$}

Let $S_{g,p}^b$ be a genus $g$ surface with $p$ punctures and $b$ boundary components. In this case, firstly we want to locate $*$. Let us decompose the surface into pair of pants as the following Figure \ref{figure5}. 

\begin{figure}[H]
\minipage{0.32\textwidth}
\includegraphics[width=\linewidth]{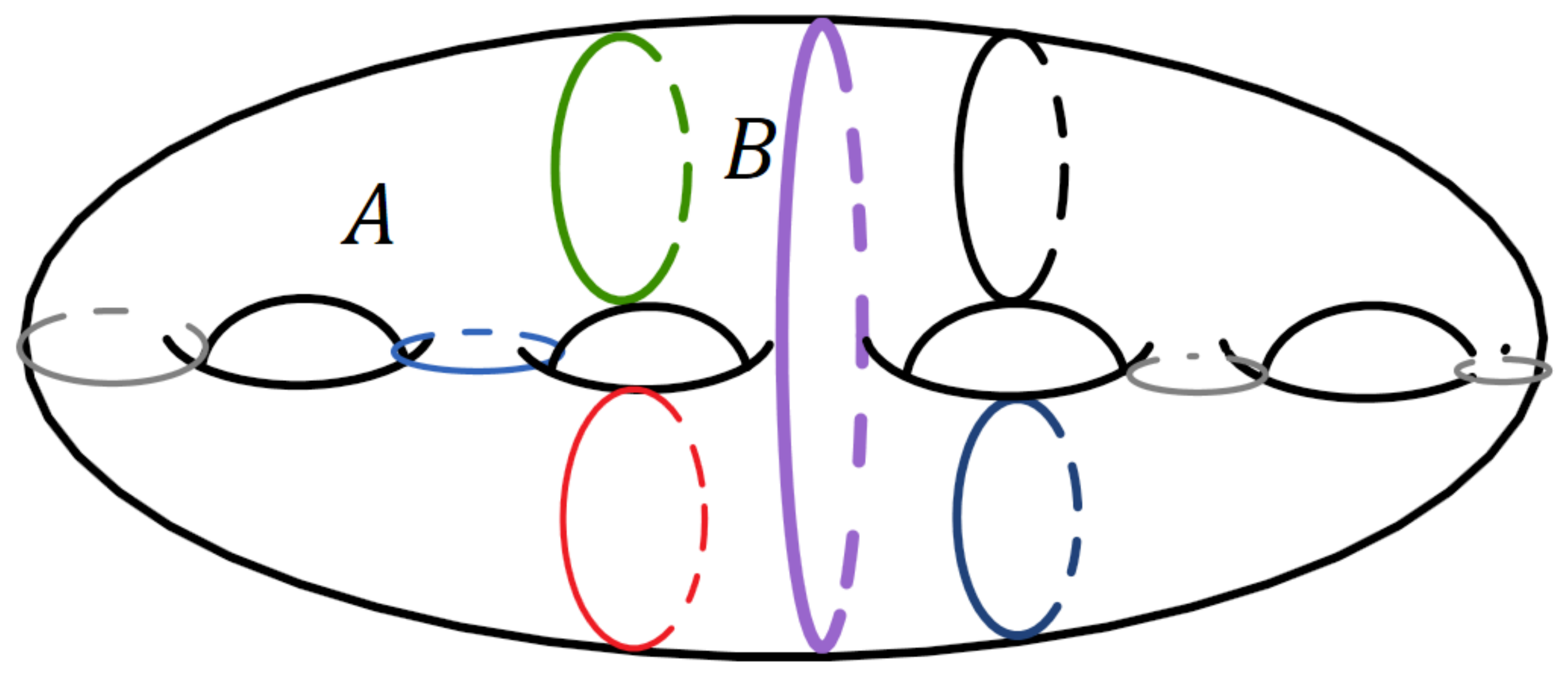}
\caption{A decomposition}
  \label{figure5}
\endminipage\hfill
\minipage{0.32\textwidth}
  \includegraphics[width=\linewidth]{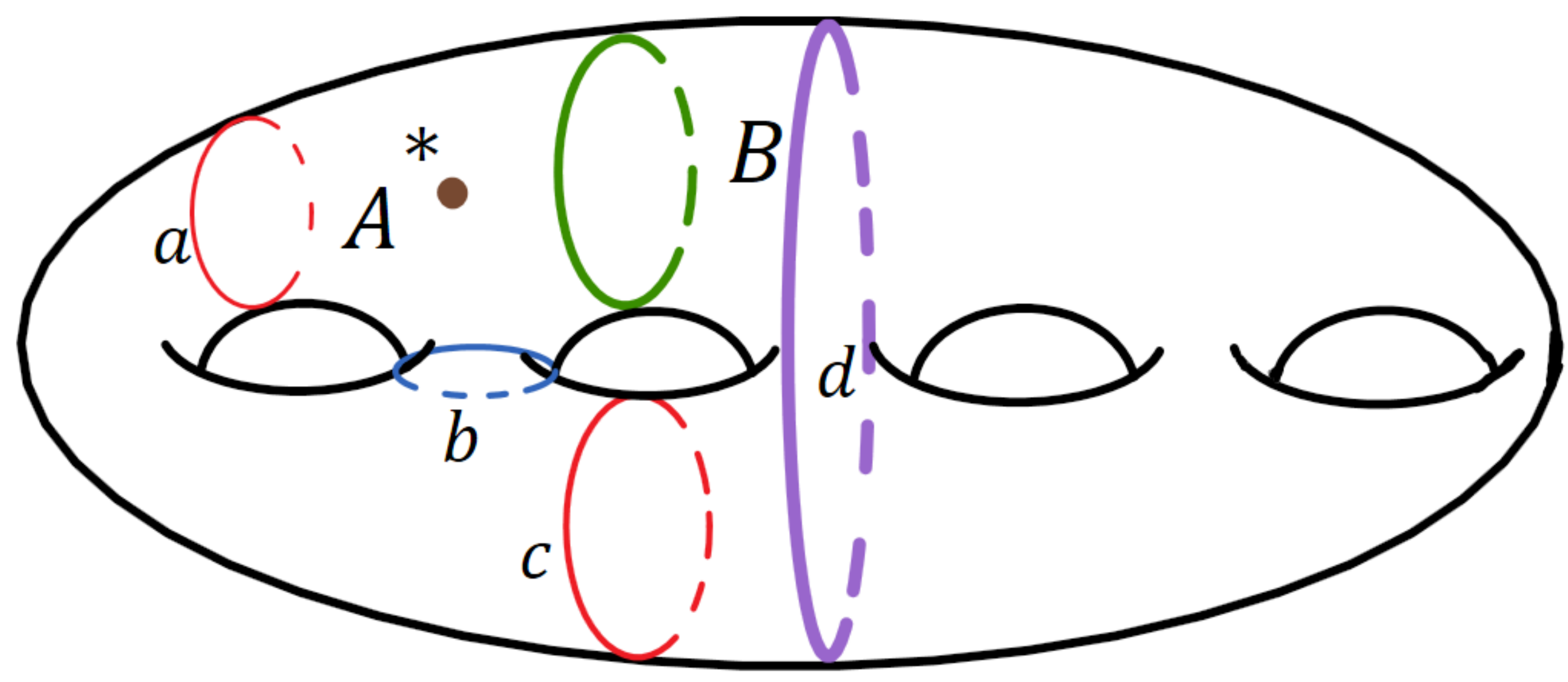}
  \caption{possibility 1}
  \label{figure6}
\endminipage\hfill
\minipage{0.32\textwidth}
  \includegraphics[width=\linewidth]{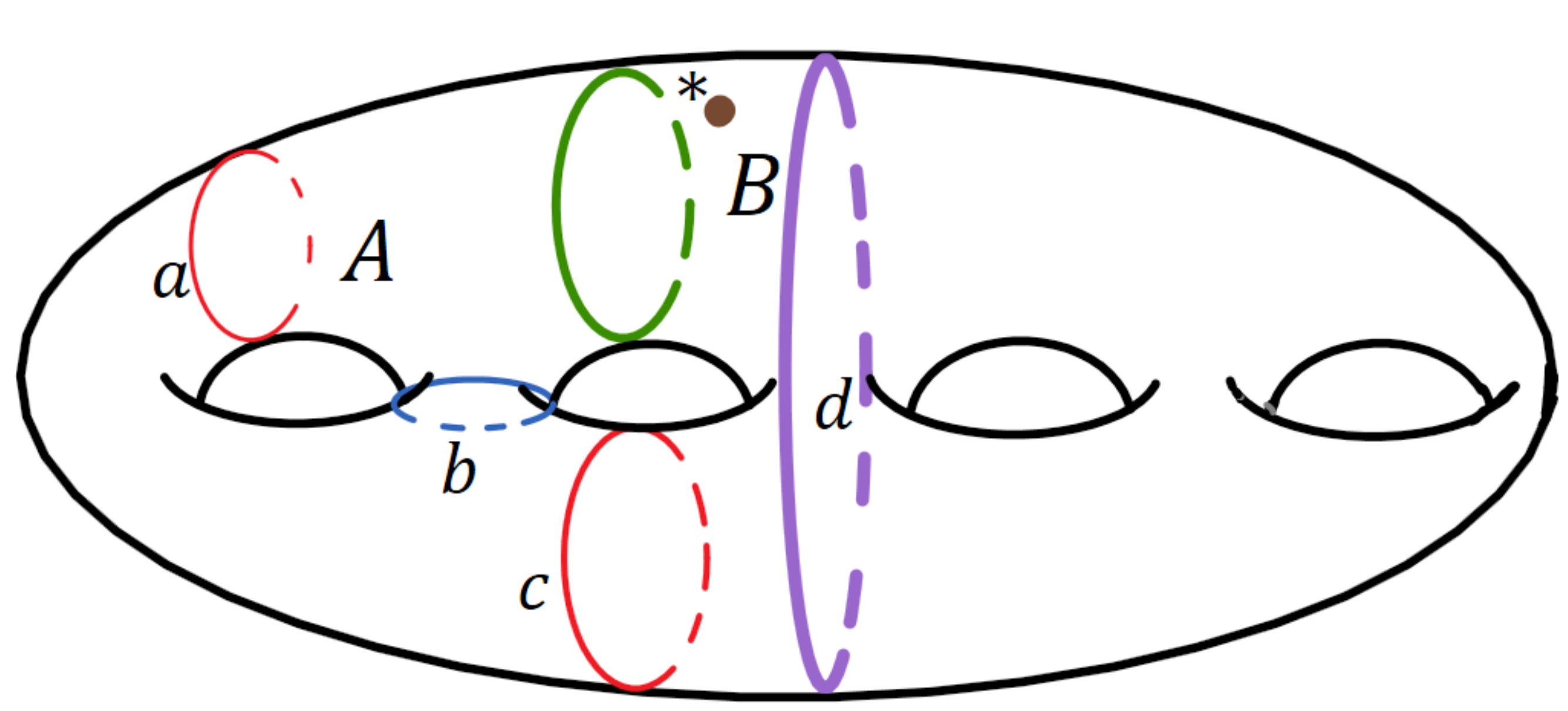}
    \caption{possibility 2}
\label{figure7}
\endminipage\hfill
\end{figure}
The location of $*$ can be either in a pair of pants where all three curves are nonseparating like $A$ or one of them is separating like $B$. Suppose without loss of generality that $*$ lands on $A$ or $B$. If $*$ lands on $A$, we use Figure \ref{figure6} to find four curves $a,b,c,d$ and if $*$ lands on $B$, we use Figure \ref{figure7} to find four curves $a,b,c,d$. The curves $a,b,c,d$ that we find satisfy the following properties:\\
1) $d$ is separating and $a\cup b\cup c\cup d$ bounds a 4-boundary sphere $S\approx S_0^4\subset S_g$. \\
2) The lifts $a',b',c',d'$ are 4 disjoint simple closed curves on $S_{g,1}$ such that $d'$ is separating and $a'\cup b'\cup c'\cup d'$ bounds a 4-boundary sphere with $*$ in $S'\approx S_{0,1}^4\subset S_{g,1}$. See the following figures.

\begin{figure}[H]
\minipage{0.40\textwidth}
  \includegraphics[width=\linewidth]{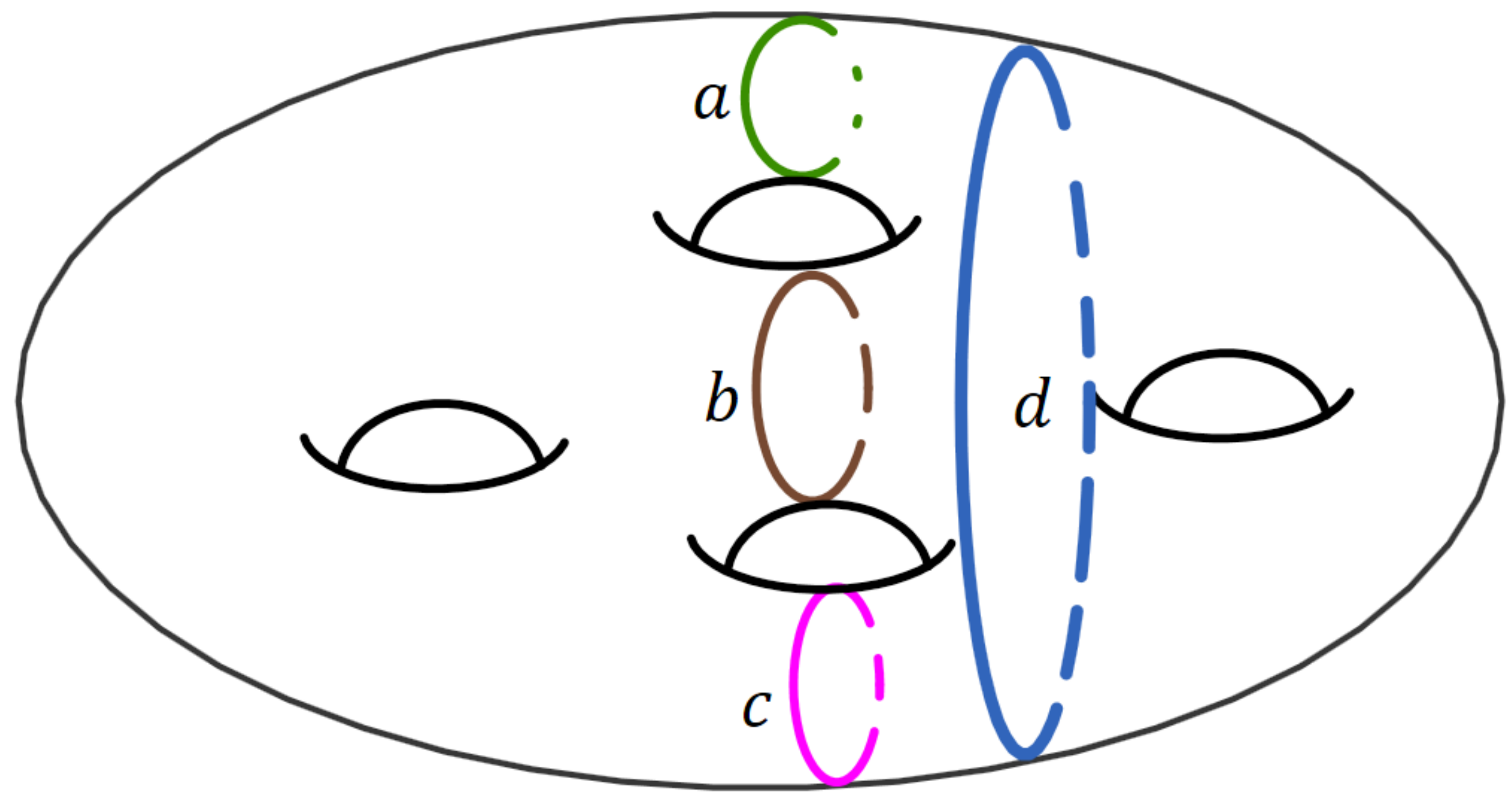}
  \caption{On $S_g$}
\endminipage\hfill
\minipage{0.40\textwidth}
  \includegraphics[width=\linewidth]{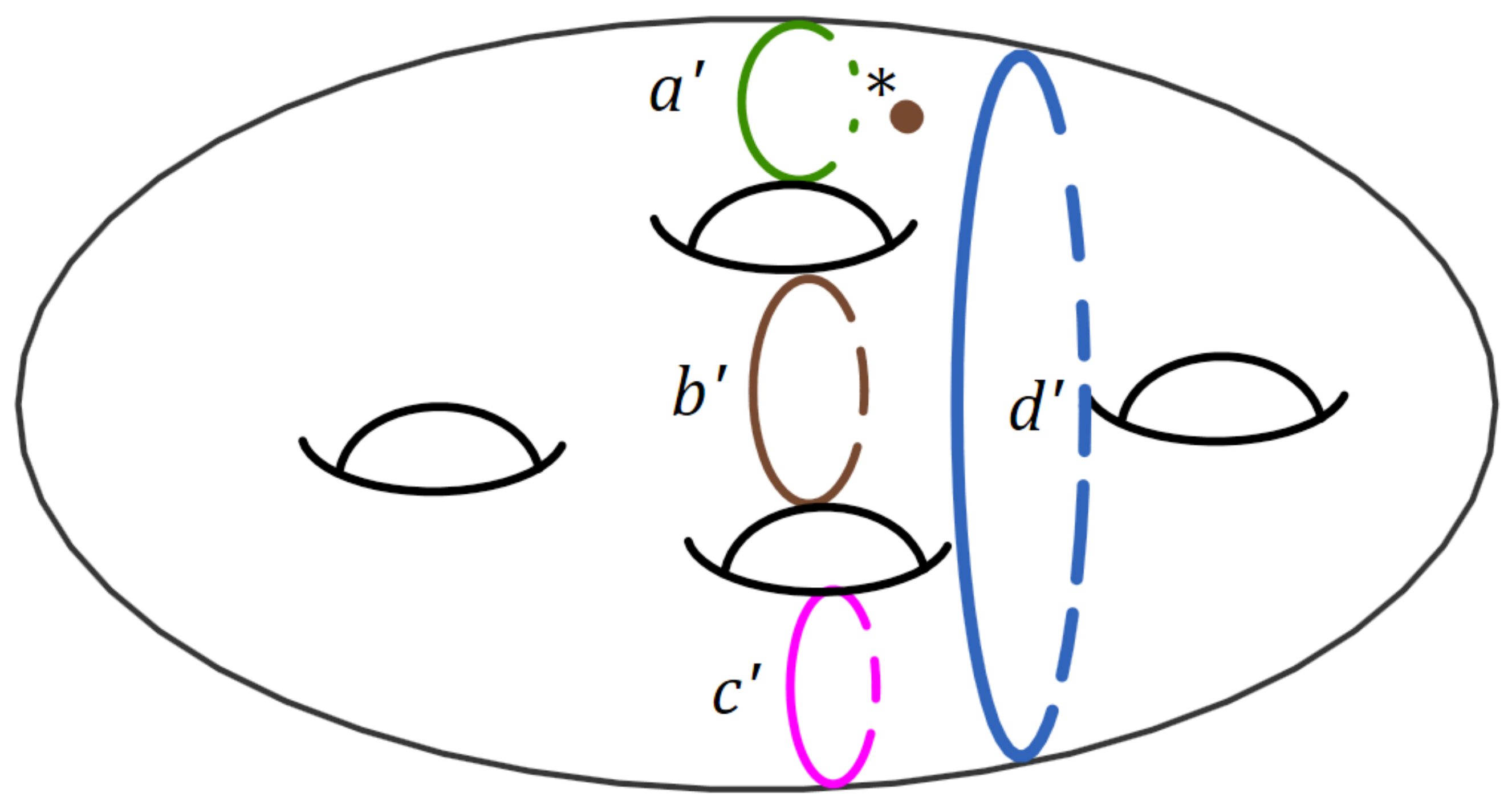}
  \caption{On $S_{g,1}$}
\endminipage\hfill
\end{figure}

\begin{claim}Let $W$ be the subgroup of ${\cal I}_g$ generated by bounding pair maps with curves on $S$. Let $W'$ be the subgroup of ${\cal I}_{g,1}$ generated by bounding pair maps with curves on $S'$ such that one of the curves lies in $a',b',c'$. We have that
\[ W\cong PB_3 \text{  and   }
W'\cong PB_4.\]
\end{claim}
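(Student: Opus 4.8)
The plan is to compute $W$ and $W'$ inside the mapping class groups of the embedded subsurfaces $S\approx S_0^4$ and $S'\approx S_{0,1}^4$, and then to cap off the three non-separating boundary curves $a,b,c$ (resp. $a',b',c'$) with once-punctured disks; this produces surjections onto $PB_3$ (resp. $PB_4$) that I will show restrict to isomorphisms on $W$ (resp. $W'$). The first point is that $S$ sits in $S_g$ with no boundary component bounding a disk or a once-punctured disk and no two boundary components cobounding an annulus, so the inclusion induces an injection $\text{PMod}(S)\hookrightarrow\text{Mod}(S_g)$, and similarly $\text{PMod}(S')\hookrightarrow\text{Mod}(S_{g,1})$ \cite[Theorem 3.18]{BensonMargalit}. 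Hence $W$ and $W'$ may be regarded as subgroups of $\text{PMod}(S)$ and $\text{PMod}(S')$, where I can work combinatorially.

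Next I would record that $S_0^4$ has exactly three essential simple closed curves $x,y,z$ up to isotopy, with $x$ separating $\{a,b\}$ from $\{c,d\}$, $y$ separating $\{a,c\}$ from $\{b,d\}$, and $z$ separating $\{a,d\}$ from $\{b,c\}$. Since $a\cup b\cup c\cup d$ bounds $S$ and $d$ is separating, $[a]+[b]+[c]=0$ in $H_1(S_g;\mathbb{Z})$, so the only bounding pairs among $\{a,b,c,x,y,z\}$ are $\{x,c\},\{y,b\},\{z,a\}$, and therefore $W=\langle T_xT_c^{-1},T_yT_b^{-1},T_zT_a^{-1}\rangle$. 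Capping $a,b,c$ turns $S$ into a disk with three punctures $\bar a,\bar b,\bar c$ (whose remaining boundary is the image of $d$), giving a capping surjection $q\colon\text{PMod}(S)\twoheadrightarrow PB_3$ with $\ker q=\langle T_a,T_b,T_c\rangle$. To see that $q|_W$ is an isomorphism I need two things. First, injectivity: it suffices that $\langle T_a,T_b,T_c\rangle\cap{\cal I}_g=1$, which I would get by letting $T_a^kT_b^lT_c^m$ act on $H_1(S_g;\mathbb{Z})$ (a composite of transvections along $[a],[b],[c]$), substituting $[c]=-[a]-[b]$, and using linear independence of the functionals $\langle\cdot,[a]\rangle,\langle\cdot,[b]\rangle$ to force $k=l=m=0$. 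Second, surjectivity: $q$ kills $T_a,T_b,T_c$ and sends $T_x,T_y,T_z$ to the Dehn twists about the curves encircling $\{\bar a,\bar b\},\{\bar a,\bar c\},\{\bar b,\bar c\}$, i.e. the standard generators of $PB_3$. This yields $W\cong PB_3$.

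The argument for $W'$ is parallel. Capping $a',b',c'$ makes $S'$ into a disk with four punctures $\bar a',\bar b',\bar c',*$, giving $q'\colon\text{PMod}(S')\twoheadrightarrow PB_4$ with $\ker q'=\langle T_{a'},T_{b'},T_{c'}\rangle$; the same homology computation (now with $[a']+[b']+[c']=0$ in $H_1(S_{g,1})\cong H_1(S_g)$) gives $\langle T_{a'},T_{b'},T_{c'}\rangle\cap{\cal I}_{g,1}=1$, hence $q'|_{W'}$ is injective. For surjectivity I would display, among the defining generators of $W'$, the six bounding pair maps $T_{\gamma_{ij}}T_k^{-1}$ (where $\{i,j,k\}=\{a',b',c'\}$ and $\gamma_{ij}$ encircles the holes $i,j$, so $[\gamma_{ij}]=\pm[k]$ since $[i]+[j]=-[k]$, and $\gamma_{ij}$ cobounds with $k$) and $T_{\gamma_{i*}}T_i^{-1}$ (where $\gamma_{i*}$ encircles hole $i$ and the puncture $*$, so $[\gamma_{i*}]=[i]$ and it cobounds with $i$); each has a curve in $\{a',b',c'\}$, so is a genuine generator of $W'$, and $q'$ sends these onto the six standard generators $A_{\bar a'\bar b'},A_{\bar a'\bar c'},A_{\bar b'\bar c'},A_{\bar a'*},A_{\bar b'*},A_{\bar c'*}$ of $PB_4$. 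Hence $W'\cong PB_4$. For use later in the main proof I would also note that these identifications carry the puncture-forgetting map $W'\to W$ to the forgetful homomorphism ${\cal F}\colon PB_4\to PB_3$ (capping $a,b,c$ after forgetting $*$ agrees with forgetting $*$ after capping $a',b',c'$), and that $T_d\in W$, $T_{d'}\in W'$ are carried to the full twists, i.e. the generators of the centers $Z(PB_3)$, $Z(PB_4)$.

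The step I expect to be the main obstacle is the surjectivity of $q'|_{W'}$: one must verify, against the explicit configuration in the relevant figures, that the bounding pair maps on $S'$ with one curve among $a',b',c'$ really do reach all six standard generators of $PB_4$ — equivalently, that curves $\gamma_{ij}$ and $\gamma_{i*}$ in the required positions actually occur on $S'$. The homology computation $\langle T_a,T_b,T_c\rangle\cap{\cal I}_g=1$ is the only other step that is not purely formal, and it is precisely where the hypothesis that $S$ is an honestly embedded four-holed sphere with $d$ separating gets used.
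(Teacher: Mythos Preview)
Your proposal is correct and follows essentially the same approach as the paper: both cap off the three nonseparating boundaries $a,b,c$ (resp.\ $a',b',c'$) by punctured disks to produce a map to $PB_3$ (resp.\ $PB_4$), and both prove injectivity via the homology computation showing that no nontrivial product $T_a^mT_b^nT_c^l$ lies in the Torelli group. The paper is terser---it declares surjectivity ``clear'' and dispatches the $W'$ case with ``for the same reason''---whereas you spell out the explicit bounding pairs $\{x,c\},\{y,b\},\{z,a\}$ and the six generators hitting $PB_4$; your additional remarks about compatibility with the forgetful map ${\cal F}$ and the centers are exactly what the subsequent application of Lemma~\ref{nonsplitting} needs.
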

\begin{proof}
W only acts nontrivially on $S\cong S_0^4$. After gluing punctured disks to the boundaries $a,b$ and $c$, there is a homomorphism $\mu: W\to PB_3$. Since every closed curve inside $S_{0,3}^1$ is isotopic to one of the boundary components, every bounding pair map of $W$ maps to a Dehn twist in $PB_3$ under $f$. It is clear that $\phi$ is surjective. If $f\in \text{ker}(\mu)$ as a mapping class on $S$, then $f$ is either trivial or equal to a product of Dehn twists on $a,b,c$. However, we claim that a nontrivial product of Dehn twists on $a,b,c$ is not in Torelli group, which shows that $\mu$ is injective. Suppose the opposite that $f=T_a^{m}T_b^nT_c^l\in {\cal I}_g$ and $l\neq 0$. Let $a',b'$ be two curves and denote by $I(a',b')$ the \emph{algebraic intersection number} of $a'$ and $b'$. For $x\in H_1(S_g;\mathbb{Z})$, we have that $T_a^{m}T_b^nT_c^l(x)=mI(a,x)a+nI(b,x)b+lI(c,x)c+x$. The fact that $T_a^{m}T_b^nT_c^l$ is in ${\cal I}_g$ implies that $mI(a,x)a+nI(b,x)b+lI(c,x)c=0$ for any $x$. Since $a,b$ are independent, there exists an element $x$ such that  $I(x,a)=0$ and $I(x,b)=1$. Since $a\cup b\cup c$ separate implying that $a+b+c=0$, we have that $I(c,x)=-1$. This contradicts $mI(a,x)a+nI(b,x)b+lI(c,x)c=nb-lc=0$ because $b,c$ are independent and $l\neq 0$. For the same reason $W'\cong PB_4$.
\end{proof}

The lifts of elements in $W$ is inside $W'$ and $F:W'\to W$ is the forgetful map forgetting the puncture ${\cal F}:PB_4\to PB_3$. To conclude the proof of the theorem we only need to apply Lemma \ref{nonsplitting} that there is no splitting of ${\cal F}$ satisfying our assumption.

\end{proof}

\section{Torelli spaces with punctures}

In this section, we discuss the ``section problem" for the universal Torelli bundle with punctures.

\subsection{Translation to a group theoretical problem}
We first translate the ``section problem" of the universal Torelli surface bundle into a group-theoretic statement. As is discussed in \cite[Chapter 2.1]{lei1}, we have the following correspondence when $g>1$:
\newcommand{\pctext}[2]{\text{\parbox{#1}{\centering #2}}}
\begin{alignat}{2}
      \label{correspondence}
 \Biggl\{ \pctext{1.5in}{Conjugacy classes of representations $\rho: \pi_1(B)\to \text{Mod}_g$ }\Biggr\}                                                      
      \Longleftrightarrow \Biggl\{  \pctext{1.5in}{Isomorphism classes of oriented $S_g$-bundles over $B$} \Biggr\}.    
      \end{alignat}
Let $f:E\to B$ be a surface bundle determined by $\rho: \pi_1(B)\to \text{Mod}_g$. Let $f_*:\pi_1(E)\to \pi_1(B)$ be the map on the fundamental groups. By the property of pullback diagrams, finding a splitting of $f_*$ is the same as finding a homomorphism $p$ that makes the following diagram commute, i.e. $\pi_{g,1}\circ p=\rho$.
\begin{equation}
\xymatrix{
\pi_1(E)\ar[r]\ar[d]^{f_*}& \text{Mod}_{g,1}\ar[d]^{\pi_{g,1}}\\
\pi_1(B)\ar[r]^{\rho}\ar[ru]^{p} & \text{Mod}_g.}
\label{CDD}
\end{equation}

We have the following correspondence:
\begin{alignat}{2}
      \label{correspondence2}
 \Biggl\{ \pctext{2in}{Homotopy classes of continuous sections of $S_g\to E\xrightarrow{f} B$}\Biggr\}                                                      
      \Longleftrightarrow \Biggl\{  \pctext{3in}{Homomorphisms $p$ satisfying diagram (\ref{CDD}) up to conjugacy by an element in Ker$(\pi_{g,1})\cong \pi_1(S_g)$} \Biggr\}.    
      \end{alignat}

By the correspondence (\ref{correspondence2}), we can translate Theorem \ref{Torelli} into the following group-theoretic statement. Let ${\cal PI}_{g,n} \xrightarrow{{T\pi_{g,n}}} {\cal I}_g$ and ${\cal I}_{g,n} \xrightarrow{{T\pi'_{g,n}}} {\cal I}_g$ be the forgetful maps forgetting the punctures. Let ${\cal I}_{g,n}\xrightarrow{Tp_{g,n,i}} \text{Mod}_{g,1}$ be the forgetful homomorphism forgetting the fixed points $\{x_1,...,\hat{x_i},...,x_n\}$. Let $PB_n(S_g)$ (resp. $B_n(S_g)$) be the \emph{$n$-strand surface braid group}, i.e. the fundamental group of the space of ordered (resp. unordered) $n$ distinct points on $S_g$. By the \emph{generalized Birman exact sequence} (see e.g. \cite[Theorem 9.1]{BensonMargalit}), we have that $Ker(T\pi_{g,n})\cong PB_n(S_g)$ and $Ker(T\pi'_{g,n})\cong B_n(S_g)$. See \cite[Chapter 2.1]{lei1} for more details. We will prove the following proposition in the next subsection.
\begin{prop}For $g>1$ and $n\ge 0$. The following holds:

1) Every homomorphism $p$ satisfying the following diagram is either conjugate to a forgetful homomorphism $Tp_{g,n,i}$ by an element in ${\cal PI}_{g,n}$ or factors through $T\pi_{g,n}$, i.e. there exists $f$ such that $p=f\circ T\pi_{g,n}$.
\begin{equation}
\xymatrix{
1 \to PB_n(S_g)\ar[r]\ar[d]^R & {\cal PI}_{g,n} \ar[r]^{T\pi_{g,n}}\ar[d]^p& {\cal I}_g\ar[r]\ar[d]^=&  1 \\
1 \to\pi_1(S_g)\ar[r] &\text{\normalfont Mod}_{g,1}\ar[r]^{\pi_{g,1}}& \text{\normalfont Mod}_g \ar[r]& 1.}         
\label{diagram10}
\end{equation}

2) For $n>1$, every homomorphism $p'$ satisfying the following diagram factors through $T\pi_{g,n}'$, i.e. there exists $f'$ such that $p'=f'\circ T\pi_{g,n}'$
  \begin{equation}
\xymatrix{
1 \to B_n(S_g)\ar[r]\ar[d]^{R'} & {\cal I}_{g,n} \ar[r]^{T\pi_{g,n}'}\ar[d]^{p'}& {\cal I}_g\ar[r]\ar[d]^=&  1 \\
1 \to\pi_1(S_g)\ar[r] &\text{\normalfont Mod}_{g,1}\ar[r]^{\pi_{g,1}}&\text{\normalfont Mod}_g \ar[r]& 1.}    
\label{diagram20}
\end{equation}
\label{P}
\end{prop}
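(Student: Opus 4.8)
\textbf{Proof proposal for Proposition \ref{P}.}

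The plan is to reduce both statements to the main theorem together with a careful analysis of how a hypothetical lift interacts with point-pushing subgroups, and then to bootstrap from the case of one marked point. First I would treat the simplest nontrivial case $n=1$ of part (1). Here $PB_1(S_g)\cong\pi_1(S_g)$ and the diagram \eqref{diagram10} is exactly the lift problem for the Birman exact sequence \eqref{*}: a homomorphism $p$ making the square commute is precisely a splitting of $T\pi_{g,1}$, unless $R$ is trivial, in which case $p$ factors through $T\pi_{g,1}$. By Theorem \ref{main} no such splitting exists for $g>3$, so for $n=1$ every $p$ factors through $T\pi_{g,1}$, and the forgetful homomorphism $Tp_{g,1,1}$ is nothing but (a representative of the factorization, post-composed with) the section coming from the fixed point, so the dichotomy holds. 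For $g>1$ but $g\le 3$, the statement for $n=1$ is exactly that every such $p$ is the canonical section or factors through; but since Proposition \ref{P} is used to prove Theorem \ref{Torelli} which is stated only for $g>3$, I would simply carry the hypothesis $g>3$ wherever Theorem \ref{main} is invoked, and otherwise keep $g>1$ only where the correspondence \eqref{correspondence} needs it.

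Next I would do the inductive step. The Birman exact sequences for configuration spaces fit together: forgetting the $n$th point gives $1\to\pi_1(S_g\setminus\{x_1,\dots,x_{n-1}\})\to {\cal PI}_{g,n}\to {\cal PI}_{g,n-1}\to 1$, and correspondingly $PB_n(S_g)$ surjects onto $PB_{n-1}(S_g)$ with kernel $\pi_1(S_g\setminus\{n-1\text{ points}\})$. Given $p$ satisfying \eqref{diagram10} for $n$, composing with the forgetful map ${\cal PI}_{g,n}\to{\cal PI}_{g,n-1}$ does not immediately make sense on the nose, so instead I would restrict $p$ to ${\cal PI}_{g,n}$ and observe that $p$ sends the point-pushing subgroup $PB_n(S_g)$ into $\pi_1(S_g)$ via $R$, where $R$ is forced to be one of the coordinate-projection maps or the zero map — this is the surface-braid-group analogue of the rigidity of homomorphisms $PB_n(S_g)\to\pi_1(S_g)$, which one gets from the fact that $\pi_1(S_g)$ has no nonabelian free subgroups-of-rank-$2$-worth of... more precisely, from the known structure of $PB_n(S_g)$ and the centrality of the $\pi_1$-fiber, any such $R$ kills all but at most one of the strand subgroups and restricts to $\pm\mathrm{id}$ on the surviving one. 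Once $R$ is pinned down to be the $i$th coordinate projection (or zero), a diagram chase shows $p$ is determined up to the kernel action by its behaviour on a complement, and one concludes $p$ is conjugate to $Tp_{g,n,i}$ (if $R$ is the $i$th projection) or factors through $T\pi_{g,n}$ (if $R=0$), by invoking the $n=1$ case applied to the bundle obtained by remembering only the surviving point.

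For part (2) the key new input is that the \emph{unordered} braid kernel $B_n(S_g)$ for $n>1$ has \emph{no} nonzero homomorphism to $\pi_1(S_g)$ that is compatible with the ${\cal I}_g$-action in the required way: the point-pushing subgroup of the full (unordered) configuration space surjects onto a symmetric group and, more to the point, the only candidate ``coordinate projections'' are not well defined because the points are unordered, while any genuinely symmetric homomorphism $B_n(S_g)\to\pi_1(S_g)$ must factor through the abelianization and then through $H_1$ of unordered configuration space, which for $n>1$ and $g>1$ maps to $H_1(S_g;\mathbb Z)$ only via the zero map (the class of a small loop around a point, which generates the relevant part, is null-homologous in $S_g$). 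Hence $R'=0$, and then $p'$ factors through $T\pi'_{g,n}$ as claimed. I would also need to check that such a factorization $f'$ actually lands in $\mathrm{Mod}_{g,1}$ compatibly — but that is automatic from $p'$ being the given homomorphism and $R'=0$ forcing $p'$ to be constant on $B_n(S_g)$-cosets.

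The main obstacle I expect is the rigidity statement for homomorphisms out of the surface-braid kernels — i.e. classifying all $R:PB_n(S_g)\to\pi_1(S_g)$ (resp. $R':B_n(S_g)\to\pi_1(S_g)$) that can appear in the left column of \eqref{diagram10} (resp. \eqref{diagram20}). Everything else is a diagram chase plus Theorem \ref{main} as a black box for $n=1$. For the pure case this rigidity is essentially a statement about maps between surface groups and their products (and can be attacked via the structure of $PB_n(S_g)$ as an iterated extension by surface groups, using that a homomorphism from a surface group of genus $\ge 2$ to $\pi_1(S_g)$ is either injective-up-to-finite or has cyclic image, combined with commutation constraints coming from the ${\cal I}_g$-equivariance); for the unordered case it reduces, as sketched, to a homology computation. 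I would present the pure case in detail and deduce the unordered case by the homological shortcut.
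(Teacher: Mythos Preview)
Your proposal has genuine gaps. First, the paper proves Proposition \ref{P} \emph{without} invoking Theorem \ref{main}; indeed Section 4 uses Proposition \ref{P} (via Corollary \ref{cor}) to give a second, independent proof of Theorem \ref{main}, so treating Theorem \ref{main} as a black box here would be circular. More directly, your $n=1$ reduction is incorrect: a map $p:\mathcal{I}_{g,1}\to\mathrm{Mod}_{g,1}$ satisfying diagram \eqref{diagram10} is not a splitting of $T\pi_{g,1}$ (wrong direction --- e.g.\ the identity inclusion is such a $p$, with $R=\mathrm{id}$), so Theorem \ref{main} gives no leverage on that case.

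The mechanism the paper actually uses, and which your sketch does not supply, is an equivariance argument: from the diagram one has $R(exe^{-1})=p(e)R(x)p(e)^{-1}$, so conjugation by $\mathcal{I}_g$ must preserve both $\mathrm{Image}(R)$ and $\mathrm{Ker}(R)$. Citing \cite[Theorem 1.5]{lei1}, the map $R$ either has cyclic image or factors through a forgetful map as $R=S\circ p_i$. A nontrivial cyclic image is ruled out because $\mathcal{I}_g$ contains pseudo-Anosov elements, which fix no nonperipheral free-homotopy class (Lemma \ref{thur}). If $S$ is not surjective then $\mathrm{Image}(R)$ is a noncyclic free group, and the key Lemma \ref{Johnson} --- proved using the Johnson homomorphism $\tau:\mathcal{I}_g\to\wedge^3H/H$ --- produces $t\in\mathcal{I}_g$ with $t(\mathrm{Ker}\,R)\neq\mathrm{Ker}\,R$, a contradiction. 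Only then is $S$ forced to be surjective, after which \cite[Theorem 2.4]{lei1} yields that $p$ is conjugate to $Tp_{g,n,i}$. Your inductive sketch never addresses the case where $R$ lands in a proper free subgroup of $\pi_1(S_g)$; this is exactly where the Johnson homomorphism enters and is the heart of the proof. The same omission undermines your part (2): the claim that any homomorphism $B_n(S_g)\to\pi_1(S_g)$ factors through the abelianization is unjustified, and the induced map on $H_1$ is in any case nonzero (via the sum-of-strands map), so the conclusion $R'=0$ does not follow from your homological shortcut. The paper instead proves (Lemma \ref{claim1}) that $\mathrm{Image}(R')$ is always a free group, then reuses the pseudo-Anosov and Johnson-homomorphism arguments to force it to be trivial.
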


\begin{proof}[\bf Proof of Theorem \ref{Torelli} assuming Proposition \ref{P}]
By Theorem \ref{main}, the short exact sequence
\[
1\to \pi_1(S_g)\to {\cal I}_{g,1}\xrightarrow{\pi_{g,1}} {\cal I}_{g}\to 1\]
has no section. Therefore Proposition \ref{P} implies Theorem \ref{Torelli}.
\end{proof}

\subsection{The proof of Proposition \ref{P}}
The top exact sequence of diagram (\ref{diagram10}) gives us a representation $\rho_T:{\cal I}_g\to \text{Out}(PB_n(S_g))$. The following lemma describes a property of $\rho_T$. Let $p_i: PB_n(S_g)\to \pi_1(S_g)$ be the induced map on the fundamental groups of the forgetful map forgetting all points except the $i$th point.
\begin{lem}
Let $h>1$. For any surjective homomorphism $\phi: PB_n(S_g)\to F_h$, there exists an element $t\in {\cal I}_g$ such that $t(\text{Ker}(\phi))\neq \text{Ker}(\phi)$. 
\label{Johnson}
\end{lem}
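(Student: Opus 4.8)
The plan is to exploit the very restricted structure of the normal subgroups of a surface braid group $PB_n(S_g)$ that admit a free quotient, and to contrast it with the transitivity properties of the $\mathcal{I}_g$-action. The surjection $\phi\colon PB_n(S_g)\to F_h$ with $h>1$ forces $\mathrm{Ker}(\phi)$ to be an infinite-index normal subgroup whose ``shape" is constrained: intuitively, a free quotient of rank $\ge 2$ of $PB_n(S_g)$ must kill all the point-pushing subgroups except for a controlled piece, since $\pi_1(S_g)$ for $g>1$ is a one-ended group with no free quotient of rank $\ge 2$, and similarly the ``partial" configuration space subgroups are far from free. So I would first analyze which conjugacy classes of curves in $S_g$ (read through the $p_i$ maps and the point-pushing copies of $\pi_1(S_g)$ inside $PB_n(S_g)$) can survive in $F_h$, and show that the collection of such surviving data is combinatorial enough that it cannot be $\mathrm{Mod}_g$-invariant, hence a fortiori not $\mathcal{I}_g$-invariant in the strong sense needed.

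\smallskip
Concretely, the key steps I would carry out in order are: (1) Recall the generalized Birman exact sequence structure of $PB_n(S_g)$ as an iterated extension whose kernels are the point-pushing subgroups $\cong \pi_1(S_g\setminus\{k\text{ points}\})$, and observe that each such kernel, for $g>1$, is either a closed surface group or a surface-with-punctures group; use the fact (one-endedness of closed surface groups, and standard facts about which subgroups of free groups are free) to show that $\phi$ restricted to a point-pushing copy of $\pi_1(S_g)$ has non-free image only if it factors through a cyclic quotient, so essentially all of $\mathrm{Ker}(\phi)$'s complexity comes from a bounded-rank quotient. (2) Deduce that $\mathrm{Ker}(\phi)$ ``remembers" only a finite amount of geometric data on $S_g$ — e.g.\ the kernel of one of the maps $p_i$, or a finite family of simple closed curves / a finite subset of $H_1(S_g;\mathbb{Z})$. (3) Show that any such finite packet of data has a nontrivial $\mathcal{I}_g$-orbit: since $\mathcal{I}_g$ acts transitively (after Johnson and Putman) on, say, genus-$1$ bounding pairs or on separating curves of a fixed topological type, there is always some $t\in\mathcal{I}_g$ moving the packet, so $t(\mathrm{Ker}(\phi))\neq\mathrm{Ker}(\phi)$.

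\smallskip
The main obstacle, I expect, is Step (1)–(2): pinning down precisely how a free quotient $F_h$ interacts with the tower of point-pushing subgroups inside $PB_n(S_g)$. The cleanest route is probably to argue via the lower central series or via one-endedness — a normal subgroup $N\triangleleft PB_n(S_g)$ with $PB_n(S_g)/N\cong F_h$ and $h\ge 2$ cannot contain any point-pushing $\pi_1(S_g)$ entirely and cannot have its intersection with that $\pi_1(S_g)$ be of finite index either (a finite-index subgroup of a closed surface group is again one-ended, so the quotient $\pi_1(S_g)/(N\cap \pi_1(S_g))$ is finite, contradicting that this image sits as a normal subgroup of $F_h$ with free, hence torsion-free, quotient unless it's trivial or everything). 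Chasing these intersections through all $n$ levels of the fibration to conclude that $N$ is determined by a finite combinatorial datum is the technical heart, and I would be prepared to invoke the structure of $PB_n(S_g)$ as an iterated surface-group extension (Birman, and the bundle description in \cite[Chapter 2.1]{lei1}) together with the rigidity of closed surface groups to carry it through. Once that is in hand, Step (3) is standard transitivity from Johnson's and Putman's work on the Torelli group.
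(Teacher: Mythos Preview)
Your proposal has a genuine gap at Steps (2)--(3), and the route you sketch does not close it. After the reduction to $n=1$ (which the paper obtains in one line from \cite[Theorem 1.5]{lei1}: every $\phi\colon PB_n(S_g)\to F_h$ factors through some $p_i$), the object you must understand is the kernel $N$ of a surjection $\pi_1(S_g)\twoheadrightarrow F_h$. Such an $N$ is \emph{not} determined by any ``finite packet of geometric data'' on $S_g$: there are uncountably many such $N$, and they are not encoded by a simple closed curve, a bounding pair, or any finite subset of $H_1$. So the transitivity results of Johnson and Putman on curves and bounding pairs give you no leverage on whether $\mathcal{I}_g$ preserves $N$. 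Your one-endedness dichotomy in Step~(1) is also off: every subgroup of $F_h$ is free, so the image of a point-pushing $\pi_1(S_g)$ is automatically free; the issue is not ``free vs.\ non-free image'' but rather which specific infinite-index normal subgroup of $\pi_1(S_g)$ arises as $N$.

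The paper's argument is of a completely different flavor and is what you need. One works in the $2$-step nilpotent quotient $\pi_1(S_g)/\pi^2$ and uses the Johnson homomorphism $\tau\colon \mathcal{I}_g\to \operatorname{Hom}(H,\wedge^2 H/\mathbb{Z}\omega)$, together with Johnson's computation that $\tau(\mathcal{I}_g)=\wedge^3 H/H$. Since $\phi^*\colon H^1(F_h)\to H^1(S_g)$ has isotropic image (the cup product on $F_h$ vanishes), there is $b\in\ker\phi$ with $[b]\neq 0$; surjectivity of $\tau$ onto $\wedge^3 H/H$ then produces $t\in\mathcal{I}_g$ with $\tau(t)([b])=[a_1]\wedge[a_2]$ for $a_1,a_2$ whose images generate part of $F_h$. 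Then $t(b)b^{-1}\equiv [a_1,a_2]\pmod{\pi^2}$, and comparing lower central series in $F_h$ shows $\phi(t(b))\neq 1$, so $t(N)\neq N$. Your passing mention of the lower central series is pointing in the right direction, but the actual mechanism---the Johnson homomorphism and its image---is the missing idea.
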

\begin{proof}
By Theorem \cite[Theorem 1.5]{lei1}, any homomorphism $\phi:PB_n(S_g)\to F_h$ factors through some $p_i$. Thus we only need to deal with the case $n=1$. We will prove the lemma by contradiction. 

Suppose the opposite that there exists a surjective homomorphism $\phi:\pi_1(S_g)\to F_h$ \ such that for any element $e\in {\cal I}_g$, we have $e(\text{Ker}(\phi))= \text{Ker}(\phi)$. Since $\phi$ is surjective, the induced map on $H_1(\_\_,\mathbb{Z})$ is also surjective. Suppose that $a_1,a_2,...,a_h\in \pi_1(S_g)$ such that $\phi(a_1),...,\phi(a_h)$ generate $F_h$. Since the cup product $H^1(F_h,\mathbb{Z})\otimes H^1(F_h,\mathbb{Z})\xrightarrow{cup} H^2(F_h,\mathbb{Z})$ is trivial, the image of $\phi^*:H^1(F_h;\mathbb{Z})\to H^1(S_g;\mathbb{Z})$ is an isotropic subspace with dimension at most $g$. Thus we can find $b\in \pi_1(S_g)$ such that $\phi(b)=1$ and $[b]\neq 0\in H_1(S_g;\mathbb{Z})$. It is clear that $[b]$ and $\{a_1,...,a_h\}$ are linearly independent. Let $\pi^0=\pi_1(S_g)$, and $\pi^{n+1}=[\pi^n,\pi^0]$, we have the following exact sequence.
\[
1\to \pi^1/\pi^2\to \pi^0/\pi^2\to \pi^0/\pi^1\to 1\]

Let $H:=H_1(S_g;\mathbb{Z})$. Let $\omega=\sum_{j=1}^g a_j\wedge b_j$. We know that $\pi^1/\pi^2\cong \wedge^2 H/\mathbb{Z}\omega$, where the identification is given by $[x,y]\to x \wedge y$. Notice that ${\cal I}_g$ acts trivially on both $\pi^1/\pi^2$ and $\pi^0/\pi^1$ but nontrivially on $\pi^0/\pi^2$. The action is measured by the Johnson homomorphism $\tau:{\cal I}_g\to \text{Hom}(H,\wedge^2 H/\mathbb{Z}\omega)$; see \cite{MR579103} for more details. Let $t\in {\cal I}_g$. For $x\in H$, let $\tilde{x}\in \pi^0$ be a lift of $x$, i.e. $\tilde{x}$ maps $x$ under the map $\pi^0\to H$. The Johnson homomorphism is defined by $\tau(t)(x)=t(\tilde{x})\tilde{x}^{-1}\in \pi^1/\pi^2$. It is standard to check that $\tau(t)$ does not depend on the choice of lift $\tilde{x}$.

Johnson \cite[Theorem 1]{MR579103} proved that the image $\tau({\cal I}_g)=\wedge^3 H/H\subset \text{Hom}(H,\wedge^2 H/\mathbb{Z}\omega)$. Therefore there exists $t\in {\cal I}_g$ such that $\tau(t)(b)=a_1\wedge a_2$. By the definition of the Johnson homomorphism, we have that $t(b)b^{-1}=[a_1,a_2]T$, where $T\in \pi^2$. Since $\phi(b)=1$, we have that $\phi(t(b))=1$ by the assumption that $t(\text{Ker}(\phi))= \text{Ker}(\phi)$. As a result, $\phi([a_1,a_2])\phi(T)=1$. 

Let $F_h^1=[F_h,F_h]$ and $F_h^{n+1}=[F_h^n,F_h]$. We have that $\phi(\pi^n)\subset F_h^n$, which implies that $\phi(T)\in F_h^2$. However $\phi([a_1,a_2])\neq 1\in F_h^1/F_h^2$. This contradicts the fact that $\phi([a_1,a_2])=\phi(T)^{-1}$.

\end{proof}

We need the following lemma from \cite[Lemma 2.2]{handel}.
\begin{lem}
For $g>1$, a pseudo-Anosov element of {\normalfont Mod}$(S_{g,n})$ does not fix any nonperipheral isotopy class of curves including nonsimple curves.
\label{thur}
\end{lem}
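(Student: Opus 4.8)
The plan is to reduce the statement for nonsimple curves to the classical fact for simple closed curves, which is a standard consequence of Thurston's classification together with the properties of the invariant measured foliations of a pseudo-Anosov map. So let $f\in\text{Mod}(S_{g,n})$ be pseudo-Anosov with invariant pair of foliations $(\mathcal{F}^s,\mathcal{F}^u)$ and dilatation $\lambda>1$, and suppose for contradiction that $f$ fixes the isotopy class of some essential (nonperipheral) closed curve $\gamma$, not necessarily simple.

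First I would recall that pseudo-Anosov maps act on the space $\mathcal{C}$ of isotopy classes of (not necessarily simple) essential closed curves with well-defined geometric-intersection pairings; equivalently, one can work in a hyperbolic metric on $S_{g,n}$ (a complete finite-area metric, treating punctures as cusps) and replace $\gamma$ by its geodesic representative, which is unique in its free homotopy class. The key point is that the dynamics of $f$ on geodesics is governed by the expansion/contraction of the invariant foliations: for any essential closed curve $c$, the length of the geodesic representative of $f^k(c)$ grows like $\lambda^{|k|}$ up to bounded multiplicative error, because a pseudo-Anosov representative uniformly stretches transverse arcs. If $f$ fixed the isotopy class of $\gamma$, then the geodesic representative of $\gamma$ would be taken to the geodesic representative of $f(\gamma)=\gamma$, hence have the same length — contradicting the strict growth $\lambda>1$ unless $\gamma$ is peripheral or trivial.

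Alternatively, and perhaps more cleanly, I would invoke the structure of the singular flat metric $\mu$ associated to $(\mathcal{F}^s,\mathcal{F}^u)$: $f$ is isotopic to an affine map that multiplies one transverse measure by $\lambda$ and the other by $\lambda^{-1}$. Every essential closed curve $c$ has a $\mu$-geodesic representative (a concatenation of saddle connections and regular segments) whose $\mu$-length $\ell_\mu(c)$ is a well-defined isotopy invariant, and $\ell_\mu(f(c))$ can be estimated below by $\lambda^{-1}\ell_\mu(c)$ and, after passing to a further power, forces strict growth of $\ell_\mu(f^k(c))\to\infty$ for any non-peripheral $c$ (the only curves with bounded $f$-orbit are freely homotopic into cusp neighborhoods, i.e.\ peripheral). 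Either way, a fixed isotopy class of a non-peripheral curve is impossible. Since this is precisely the statement of \cite[Lemma~2.2]{handel}, I would simply cite that reference for the detailed argument and note that the non-simple case is handled by the same foliation-length estimate as the simple case, the simplicity of $\gamma$ playing no role.

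The main obstacle is purely expository: making sure the ``length'' invariant used (hyperbolic geodesic length, or $\mu$-length in the flat structure) is genuinely an invariant of the isotopy class for \emph{non-simple} curves and that the growth estimate $\ell(f^k(\gamma))\asymp\lambda^{|k|}\ell(\gamma)$ applies verbatim without a simplicity hypothesis. This is true — geodesic representatives in a hyperbolic or singular-flat metric are unique in the free homotopy class regardless of self-intersections — but it is the only point where one must resist the temptation to quote the more familiar simple-closed-curve statement. Given the excerpt permits citing \cite{handel}, the cleanest route is to state the reduction and defer to that reference.
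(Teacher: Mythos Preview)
Your proposal is correct, and in fact the paper itself gives no proof at all: it simply introduces the lemma with the sentence ``We need the following lemma from \cite[Lemma 2.2]{handel}'' and moves on. So there is nothing to compare against beyond the citation, which you also invoke.

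Your sketch via length growth is the standard argument and is sound. The cleanest version is the one you gesture at with the transverse measures: for any essential nonperipheral closed curve $\gamma$ (simple or not) one has $i(\gamma,\mathcal{F}^u)>0$ because the unstable foliation of a pseudo-Anosov is minimal and has no closed leaves or saddle connections homotopic into a puncture; then $i(f(\gamma),\mathcal{F}^u)=\lambda\, i(\gamma,\mathcal{F}^u)$, so $f(\gamma)=\gamma$ would force $i(\gamma,\mathcal{F}^u)=0$, a contradiction. This avoids the mild imprecision in your flat-length estimate (the total $\mu$-length need not be monotone under $f$, only its horizontal component is). Your observation that simplicity plays no role---because unique geodesic representatives and transverse measures are defined for arbitrary free homotopy classes---is exactly the point, and is all that is needed beyond the simple-curve case usually stated in textbooks.
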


Now we have all the ingredients to prove statement 1) in Proposition \ref{P}.
\begin{proof}[\bf Proof of 1) in Proposition \ref{P}]
For any $p:{\cal PI}_{g,n}\to {\cal I}_{g,1}$, we have that for $e\in {\cal PI}_{g,n}$ and $x\in PB_n(S_g)$,
\[
R(exe^{-1})=p(e)R(x)p(e)^{-1}.\]
Denote by $C_e$ the conjugation by $e$ in any group. This induces the following diagram:
\begin{equation}
\xymatrix{ PB_n(S_g)\ar[r]^{C_e}\ar[d]^R      &    PB_n(S_g)\ar[d]^R\\
\pi_1(S_g)\ar[r]^{C_{p(e)}}&   \pi_1(S_g) .  }
\label{CD}
\end{equation}

By \cite[Theorem 1.5]{lei1}, a homomorphism $R:PB_n(S_g)\to \pi_1(S_g)$ either factors through a forgetful homomorphism or has cyclic image. We break our discussion into the two cases.\\
\\
{\bf Case 1: Image$(R)\cong \mathbb{Z}$}\\
 In this case, the image is generated by $x\in \pi_1(S_g)$. By diagram \eqref{CD}, $C_{p(e)}$ preserves Image$(R)$ for any $e$. It is known that ${\cal I}_g$ contains pseudo-Anosov elements; see \cite[Corollary 14.3]{BensonMargalit}. By Lemma \ref{thur}, a pseudo-Anosov element does not preserve Image$(R)$. Therefore $R$ does not extend to $p$.\\
\\
{\bf Case 2: $R$ factors through a forgetful homomorphism $p_i$ and does not have cyclic image} \\
In this case, we have a homomorphism $S: \pi_1(S_g)\to \pi_1(S_g)$ such that $R=S \circ p_i$. If $S$ is a surjection, by the same reason as in the proof of \cite[Theorem 2.4]{lei1}, we know that $p$ is conjugate to $p_i$. If $S$ is not a surjection, then Image$(S)$ is a noncyclic free group. By Lemma \ref{Johnson} and diagram \ref{CD}, we know $R$ does not extend to $p$.
\end{proof}
To prove statement 2) in Proposition \ref{P}, we need the following lemma.
\begin{lem}
For $n>1$, the image of any homomorphism $B_n(S_g)\to \pi_1(S_g)$ is a free group.
\label{claim1}
\end{lem}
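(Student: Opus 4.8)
The plan is to analyze the structure of the surface braid group $B_n(S_g)$ and its abelianization, and to use the fact that for $n>1$ the braid group $B_n(S_g)$ contains a distinguished normal subgroup on which any map to $\pi_1(S_g)$ must behave well. First I would recall the generalized Birman exact sequence $1\to PB_n(S_g)\to B_n(S_g)\to \Sigma_n\to 1$, together with the pure braid sequence $1\to \pi_1(S_g\setminus\{n-1 \text{ pts}\})\to PB_n(S_g)\to PB_{n-1}(S_g)\to 1$ obtained by forgetting a point; these let me understand which elements of $B_n(S_g)$ are ``braid-like'' (supported near a small disk, e.g. the half-twist $\sigma$ swapping two nearby points) versus ``geometric'' (a point moving around a handle of $S_g$).

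The key step is to pin down the image of the half-twist generators $\sigma_i$ and of the central-type elements under an arbitrary homomorphism $\psi: B_n(S_g)\to \pi_1(S_g)$. Since $\pi_1(S_g)$ is torsion-free and hyperbolic, and since in $B_n(S_g)$ the relation $\sigma_i^2 = (\text{a pure braid generator, which is a product of conjugates of loops around the two points})$ holds, I would argue that the elements generating the image are forced into a configuration that makes the image either trivial or generated by elements coming from the genus directions. The cleanest route: $B_n(S_g)$ for $n\geq 2$ and $g\geq 1$ is known to be torsion-free, and any subgroup of $\pi_1(S_g)$ is either trivial, infinite cyclic, or a free group of rank $\geq 2$ (every proper subgroup of a surface group is free, and the whole group is not free); so the image is automatically free \emph{unless} $\psi$ is surjective. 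Hence the real content is to rule out surjectivity of $\psi$ when $n>1$.

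To rule out surjectivity I would use a cohomological/abelianization obstruction analogous to the one in the proof of Lemma \ref{Johnson}: $H^2(\pi_1(S_g);\mathbb{Z})=\mathbb{Z}$ is generated by the cup product of two classes in $H^1$, so a surjection $\pi_1(S_g)\twoheadrightarrow \pi_1(S_g)$-ish target would require the cup product structure of $H^*(B_n(S_g))$ to surject onto that of $H^*(S_g)$; but the relevant $H^1$ classes of $B_n(S_g)$ (the ``winding number'' classes attached to each strand, and the symmetric braid class) have cup products that, restricted to the subspace that can map to an isotropic-free generating set, vanish, forcing the pulled-back symplectic form to be zero and hence the map not surjective. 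In other words I would run the same $\omega = \sum a_j\wedge b_j$ argument: any would-be surjection would need a nondegenerate skew form in its image on $H_1$, contradicting that the classes in $H^1(B_n(S_g))$ pairing nontrivially are exactly the ones separating the strands, which pair to zero with the genus classes in the needed way.

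The main obstacle I anticipate is the surjectivity step: showing no homomorphism $B_n(S_g)\to\pi_1(S_g)$ is onto for $n>1$. The subtlety is that $B_n(S_g)$ \emph{does} surject onto $\pi_1(S_g)$ when $n=1$ (it \emph{is} $\pi_1(S_g)$), and even for $n>1$ there are obvious surjections $B_n(S_g)\to PB_1(S_g)=\pi_1(S_g)$ if one is not careful about the symmetric group quotient — so I must use the unordered structure crucially, exploiting that the half-twist $\sigma$ has the property that its square is a pure braid with controlled image, and that $\sigma$ itself maps to a commutator-like or torsion-obstructed element. Concretely, the delicate point is to show the image is generated by at most $g$ ``independent'' directions, i.e. lies in a subgroup carried by a proper (hence free) subgroup; I expect to invoke \cite[Theorem 1.5]{lei1} again after reducing to the pure braid group via the forgetful sequence, plus a short argument that the symmetric group quotient contributes nothing new to the image since $\pi_1(S_g)$ has no finite quotients acting compatibly. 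Once the image is forced to sit inside a proper subgroup of $\pi_1(S_g)$, freeness is immediate from the Nielsen–Schreier theorem for surface groups.
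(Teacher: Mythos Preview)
Your reduction contains a genuine error that breaks the argument. You assert that ``every proper subgroup of a surface group is free,'' and conclude that the image of $\psi$ is automatically free unless $\psi$ is surjective. This is false: only \emph{infinite-index} subgroups of $\pi_1(S_g)$ are free; a finite-index subgroup is itself a closed surface group $\pi_1(S_h)$ with $h>g$. So ruling out surjectivity is not enough --- you must rule out the image being \emph{any} finite-index subgroup. (The invocation of ``Nielsen--Schreier for surface groups'' at the end repeats the same mistake.) Your cup-product sketch, besides being aimed at the wrong target, is also not obviously sound: it is not clear that the symplectic class in $H^2(\pi_1(S_g))$ fails to pull back nontrivially to $H^2(B_n(S_g))$, and in any case this would only exclude surjections onto $\pi_1(S_g)$, not onto $\pi_1(S_h)$.

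You actually have the right repair in hand. Assume the image is not free; then it is a finite-index surface subgroup $\pi_1(S_h)\le\pi_1(S_g)$. Restrict to $PB_n(S_g)$, which has finite index in $B_n(S_g)$, so the restricted image $\Phi'(PB_n(S_g))$ still has finite index in $\pi_1(S_g)$. Now the theorem from \cite{lei1} that you cite applies directly: $\Phi'$ factors through some forgetful map $p_i:PB_n(S_g)\to\pi_1(S_g)$. But then $\pi_1(S_g)$ would surject onto a finite-index subgroup of itself, which is impossible (e.g.\ by Euler characteristic, or Hopficity). This is exactly the paper's proof; the cohomological detour is unnecessary once you identify the correct obstruction.
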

\begin{proof}
Suppose that there exists a homomorphism $\Phi :B_n(S_g)\to \pi_1(S_g)$ such that the image is not a free group. Then Image$(\Phi)\cong \pi_1(S_h)$ where $h\ge g$. After precomposing with the embedding $i:PB_n(S_g)\to B_n(S_g)$, we have a homomorphism $\Phi'$ $PB_n(S_g)\to \pi_1(S_g)$ with image a nontrivial finite index subgroup. By Theorem \cite[Theorem 5]{lei1}, the map $\Phi'$ factors through some $p_i$, but there is no surjection from $\pi_1(S_g)$ to a nontrivial finite index subgroup of $\pi_1(S_g)$. This is a contradiction. By the classification of subgroups of $\pi_1(S_g)$, the image of any homomorphism $B_n(S_g)\to \pi_1(S_g)$ is a free group.
\end{proof}

\begin{proof}[\bf Proof of 2) in Proposition \ref{P}]
By Claim \ref{claim1}, we know that $R'$ is not a surjection. Therefore, the image of $R'$ is either cyclic or a noncyclic free group. For the cyclic image case, we use the same argument as in the proof of \ref{P} to show that $R$ does not extend to $p$. In the case of noncyclic free group, by Lemma \ref{Johnson}, we know that $R$ does not extend to $p$ as well. 
\end{proof}

\subsection{A nonsplitting statement}
In this subsection, we will prove the following corollary using Theorem \ref{Torelli}.
\begin{cor}
For $g>1$ and $m>n$, the forgetful map $F_{g,m,n}:{\cal PI}_{g,m}\to {\cal PI}_{g,n}$ forgetting the last $m-n$ points does not have a section.
\label{cor}
\end{cor}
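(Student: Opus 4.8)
The plan is to reduce Corollary \ref{cor} to the two non-existence statements already established, namely Theorem \ref{Torelli}(2) (no sections of $Tu'_{g,n}$, i.e.\ of $T\pi'_{g,n}$ for $n>1$) and Theorem \ref{main} (no splitting of the Birman exact sequence $1\to\pi_1(S_g)\to{\cal I}_{g,1}\to{\cal I}_g\to 1$), via the standard factorization of forgetful maps. First I would record that the forgetful map $F_{g,m,n}:{\cal PI}_{g,m}\to{\cal PI}_{g,n}$ factors as a composition of forgetful maps dropping one point at a time, ${\cal PI}_{g,m}\to{\cal PI}_{g,m-1}\to\cdots\to{\cal PI}_{g,n}$; hence if $F_{g,m,n}$ had a section, so would each one-point forgetful map ${\cal PI}_{g,k}\to{\cal PI}_{g,k-1}$ that this composition dominates — more precisely, a section of the composite would, after composing with the intermediate forgetful maps, produce a section of ${\cal PI}_{g,m}\to{\cal PI}_{g,n}$ only, so instead I would directly argue that a section $s$ of $F_{g,m,n}$ gives rise to a section of the one-point forgetful map ${\cal PI}_{g,n+1}\to{\cal PI}_{g,n}$ by composing $s$ with the forgetful map ${\cal PI}_{g,m}\to{\cal PI}_{g,n+1}$ dropping the last $m-n-1$ of the new points. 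So it suffices to treat $m=n+1$.

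Next, for $m=n+1$ I would split into two cases according to $n$. For $n\ge 1$: a section of ${\cal PI}_{g,n+1}\to{\cal PI}_{g,n}$ is by the generalized Birman exact sequence a splitting of
\[
1\to\pi_1(S_{g,n})\to{\cal PI}_{g,n+1}\to{\cal PI}_{g,n}\to 1,
\]
and I would push this forward along the forgetful map ${\cal PI}_{g,n}\to{\cal I}_g$ (which restricts to a surjection $\pi_1(S_{g,n})\to\pi_1(S_g)$ on kernels, forgetting all $n$ marked points) to obtain a homomorphism $p:{\cal PI}_{g,n}\to\text{Mod}_{g,1}$ fitting into diagram \eqref{diagram10} whose restriction $R$ to $PB_n(S_g)$ is the composite $\pi_1(S_{g,n})\twoheadrightarrow\pi_1(S_g)$, hence is surjective. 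But Proposition \ref{P}(1) says every such $p$ is conjugate (by an element of ${\cal PI}_{g,n}$) to a forgetful homomorphism $Tp_{g,n,i}$ or factors through $T\pi_{g,n}$; a homomorphism that factors through $T\pi_{g,n}$ has kernel containing $PB_n(S_g)$, so its restriction $R$ is trivial, contradicting surjectivity, while $Tp_{g,n,i}$ restricted to $PB_n(S_g)$ is the $i$-th coordinate $p_i$ — for this to be conjugate to our surjective $R$ (which is the diagonal-type map forgetting all but none of the points, i.e.\ the full quotient) is impossible since $p_i$ is not surjective onto $\pi_1(S_g)$ when $n\ge 2$, and when $n=1$ it \emph{is} surjective but then we are exactly in the situation of Theorem \ref{main}: the induced splitting would contradict non-splitting of \eqref{*}. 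I expect this case-bookkeeping — tracking which coordinate forgetful map a hypothetical section would produce, and checking that in every case it contradicts either Proposition \ref{P}(1) or Theorem \ref{main} — to be the main technical point, precisely because the ``conjugate to $Tp_{g,n,i}$'' alternative in Proposition \ref{P}(1) is not vacuous and one must verify it is incompatible with arising from a genuine section of the forgetful map.

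For the remaining low case $n=0$, $m=1$: a section of ${\cal PI}_{g,1}\to{\cal PI}_{g,0}={\cal I}_g$ is exactly a splitting of the Birman exact sequence \eqref{*}, which does not exist for $g>3$ by Theorem \ref{main}; and for $n=0$, $m>1$ one reduces to this by composing with ${\cal PI}_{g,m}\to{\cal PI}_{g,1}$ as in the first paragraph. Assembling the pieces: for any $m>n$ with $g>1$, a section of $F_{g,m,n}$ would yield a section of a one-point forgetful map ${\cal PI}_{g,k+1}\to{\cal PI}_{g,k}$ for some $k$ with $n\le k<m$, and every such section is excluded — by Theorem \ref{main} when $k=0$ (which needs $g>3$, but the statement is vacuous or reduces correctly for smaller genus via the free-group remark) and by Proposition \ref{P}(1) together with Theorem \ref{Torelli}(2) when $k\ge 1$. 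This contradiction proves the corollary. I would close by remarking that the only genuinely new input beyond citing earlier results is the reduction to the one-point case and the observation that the surjective kernel-restriction $R$ rules out both alternatives of Proposition \ref{P}(1).
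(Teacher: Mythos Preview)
Your reduction to the one-point case $m=n+1$ is fine, and the idea of feeding a hypothetical section into Proposition~\ref{P}(1) is natural. But the argument breaks down precisely at the step you flagged as ``the main technical point'': ruling out the alternative that $p$ is conjugate to a forgetful homomorphism $Tp_{g,n,i}$. Your proposed contradiction rests on two claims, both of which are false. First, you assert that $p_i:PB_n(S_g)\to\pi_1(S_g)$ is not surjective when $n\ge 2$; in fact each $p_i$ is induced by a Fadell--Neuwirth fibration $\mathrm{PConf}_n(S_g)\to S_g$ and is surjective for every $n\ge 1$. Second, you assert that the restriction $R$ of the $p$ built from a section is a specific surjection (``the composite $\pi_1(S_{g,n})\twoheadrightarrow\pi_1(S_g)$''); but $R$ has domain $PB_n(S_g)$, not $\pi_1(S_{g,n})$, and its value depends on the unknown section $s$, so you cannot read it off a priori. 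With both claims gone, Proposition~\ref{P}(1) leaves you exactly where you started: $p$ may well be conjugate to some $Tp_{g,n,i}$, and nothing you have written excludes this.

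The point you are missing is that a section of $F_{g,n+1,n}$ is not merely a section of the universal bundle $Tu_{g,n}$: it is a section \emph{disjoint from the $n$ tautological sections} $Ts_1,\dots,Ts_n$. Proposition~\ref{P}(1) (equivalently Theorem~\ref{Torelli}(1)) tells you any section is homotopic to some $Ts_i$, so the new section would be homotopic to $Ts_i$ yet disjoint from it, forcing the self-intersection of $Ts_i$ to vanish. The paper's proof supplies the missing ingredient by computing that self-intersection: restricting to the trivial bundle over $\mathrm{PConf}_n(S_g)$, the section $Ts_i$ pulls back from the diagonal in $S_g\times S_g$, whose self-intersection is $2-2g\neq 0$, and this class survives in $H^2(\mathrm{PConf}_n(S_g);\mathbb{Z})$. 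Without some argument of this kind---a cohomological or intersection-theoretic obstruction distinguishing ``homotopic to $Ts_i$'' from ``equal to $Ts_i$''---your line of reasoning cannot close. (Separately, your invocation of Theorem~\ref{Torelli}(2) is misplaced: that result concerns the non-pure group ${\cal I}_{g,n}$, not ${\cal PI}_{g,n}$.)
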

\begin{proof}
We only need to show that the $n$ sections of the bundle $Tu_{g,n}$ have nontrivial self-intersection. Then we cannot find $n+1$ disjoint sections on $Tu_{g,n}$. We restrict our attention to the subgroup $PB_n(S_g)$ of ${\cal PI}_{g,n}$. Let PConf$_n(S_g)$ be the space of n-tuples of distinct points on $S_g$. Since PConf$_n(S_g)=K(PB_n(S_g),1)$, we have that PConf$_n(S_g)$ is a subspace of ${\cal BPI}_{g,n}$. The bundle on PConf$_n(S_g)$ is the trivial bundle 
\[
\text{PConf}_n(S_g)\times S_g\xrightarrow{P_n} \text{PConf}_n(S_g)\] with $n$ sections $s_i(x_1,...,x_n)=(x_1,...,x_n,x_i)$. By Poincar\'e duality, the section is represented by a class in $H^2(\text{PConf}_n(S_g)\times S_g;\mathbb{Z})$. So the self-intersection of a section is a class in $H^4(\text{PConf}_n(S_g)\times S_g;\mathbb{Z})$. Let $p_i(x_1,...,x_n)=x_i$ be the projection of $\text{PConf}_n(S_g)$ to $S_g$. We have the following pullback diagram such that $s_i$ is the pullback of the diagonal section for the trivial bundle $P_1$.
\begin{equation}
\xymatrix{
\text{PConf}_n(S_g) \times S_g  \ar[r]^-{(p_i,id)}  \ar[d]^{P_n}   \pb  & S_g\times S_g\ar[d]^{P_1}\\
\text{PConf}_n(S_g)\ar[r]    & S_g  .}
\label{PB}
\end{equation}
Since $s_i$ is the pullback from the trivial bundle $P_1$, the self-intersection of $s_i$ is the pullback of the corresponding class $\gamma$ in $H^4(S_g\times S_g;\mathbb{Z})$. Let $[S_g]$ (resp. $[S_g\times S_g]$) be the fundamental class of $S_g$ (resp. $S_g\times S_g$). It is classical that the class is $\gamma=(2-2g)[S_g\times S_g]$. By the Gysin homomorphism, 
\[
p_{!}(p_i,id)^*(2-2g)[S_g\times S_g]=(2-2g)p_i^*[S_g]\in H^2(\text{PConf}_n(S_g);\mathbb{Z})
\] which is nonzero by the computation in \cite[Lemma 3.4]{lei1}.  

\end{proof}

\section{Another proof of Theorem \ref{main}}
We want to point out here that the punctured case can help us with the case of no punctures, i.e. Proposition \ref{P} can give us another proof of Theorem \ref{main}. Notice that the proof of Proposition \ref{P} does not depend on Theorem \ref{main}. Let ${\cal I}_{g,p}^b$ be the Torelli group of $S_{g,p}^b$, i.e. the subgroup of Mod$_{g,p}^b$ that acts trivially on $H^1(S_g;\mathbb{Z})$.

\begin{proof}[\bf Second proof of Theorem \ref{main}]
Again let $g>3$. We assume that the exact sequence (\ref{*}) has a splitting which is denoted by $\phi$ such that $F\circ \phi=id$. 

By Lemma \ref{haha}, the image $\phi(T_s)$ of $T_s$ the Dehn twist about a separating curve $s$ is $T_{s'}^nT_{s''}^{1-n}$ where $s'$ and $s''$ are curves on $S_{g,1}$ that are isotopic to $s$. Let $UTS_g$ be the unit tangent bundle of genus $g$ surface. Let $s$ be a separating curve that separates $S_g$ into two parts $C_1\cong S_p^1$ and $C_2\cong S_q^1$ such that $p,q \ge 2$. The combination of torelli groups of $C_1$ and $C_2$ gives us a subgroup $G$ of ${\cal I}_g$ satisfying the following short exact sequence.
\[
1\to \mathbb{Z}\xrightarrow{(T_s,T_s^{-1})} {\cal I}_p^1\times {\cal I}_q^1\to G\to 1
\]
The disk pushing subgroup is $\pi_1(UTS_p)\to {\cal I}_p^1$, i.e. see \cite[Page 118]{BensonMargalit}. The disk pushing subgroups of $C_1$ and $C_2$ give us a subgroup $A$ of $G$ satisfying the following short exact sequence.
\begin{equation}
1\to \mathbb{Z}\xrightarrow{(T_s,T_s^{-1})} \pi_1(UTS_p)\times \pi_1(UTS_q) \to A\to 1
\label{euler}
\end{equation}
\begin{claim}
$\phi(T_s)=T_{s'}$ for a curve $s'$ on $S_{g,1}$ that is isotopic to $s$.
\end{claim}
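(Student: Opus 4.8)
The plan is to rule out the possibility $n\neq 0,1$ in the formula $\phi(T_s)=T_{s'}^{n}T_{s''}^{1-n}$ coming from Lemma \ref{haha}, by playing off the disk-pushing subgroup $A$ against the Euler class obstruction. First I would use the hypothesis of Case 2 of the proof of Theorem \ref{main} (which the second proof inherits): by the Case-1 argument in the proof of Theorem \ref{main}, if $\phi(T_s)=T_{s'}^{n}T_{s''}^{1-n}$ with $n\neq 0,1$, then embedding $s$ in a suitable four-holed sphere and applying the lantern relation on $S_{g,1}$ forces $T_{s'}^{n}T_{s''}^{1-n}=T_{s'}$, hence $n=1$. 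This is exactly the sentence ``This proof also works for the Dehn twist $T_s$ about a separating curve $s$'' already established there, so the claim reduces to invoking that computation. In other words, Lemma \ref{haha} plus the lantern argument of Case 1 immediately gives $\phi(T_s)=T_{s'}$ for a single curve $s'$ isotopic to $s$.

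Alternatively — and this is the route that fits the ``second proof'' narrative, since it does not reuse the lantern relation — I would argue via the restriction $\phi|_A$ to the disk-pushing subgroup in the short exact sequence \eqref{euler}. The key point is that $\phi(T_s)$ generates the central $\mathbb{Z}$ in the source of \eqref{euler}, so $\phi$ sends the extension \eqref{euler} to an extension of $A$ inside ${\cal I}_{g,1}$; tracking where the central generator goes, the image $\phi(T_s)$ must be a mapping class whose centralizer in ${\cal I}_{g,1}$ contains a copy of $\pi_1(UTS_p)\times\pi_1(UTS_q)$ modulo its center. If $\phi(T_s)=T_{s'}^{n}T_{s''}^{1-n}$ with $s'\neq s''$ and $n\neq 0,1$, then CRS$(\phi(T_s))=\{s',s''\}$ is a two-element set, and cutting along it produces a component that is a ``doubled'' version of one side; comparing the disk-pushing subgroups on the two sides with the $\mathbb{Z}^2$ worth of twists $T_{s'},T_{s''}$ in the centralizer gives a rank discrepancy in the abelianization, exactly as in the Step 2 argument of Lemma \ref{haha}. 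This forces $\#\text{CRS}(\phi(T_s))=1$, i.e. $\phi(T_s)=T_{s'}$.

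The step I expect to be the main obstacle is verifying rigorously that $\phi(T_s)$ really does carry the central generator of \eqref{euler} to the central generator of the lifted extension, rather than to some power or to a central element times something in the kernel $\pi_1(S_g)$ — the subtlety is that $\pi_1(UTS_p)$ and $\pi_1(UTS_q)$ have nontrivial centers, so the ``center maps to center'' bookkeeping needs the fact that $\phi$ is a section of $F$ (so $F\phi(T_s)=T_s$) together with the explicit description of centers of unit-tangent-bundle groups of surfaces of genus $\ge 2$. Once that bookkeeping is in place, the rank/CRS comparison is routine given Lemma \ref{haha} and Proposition \ref{noin}. I would therefore present the short lantern-based deduction as the main line of argument and relegate the centralizer computation to a remark, since the former is already contained in the proof of Theorem \ref{main}.
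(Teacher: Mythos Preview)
Your first route---invoking the Case~1 lantern computation---is correct and is exactly what the paper notes in its opening sentence (``We have already proved this result in the proof of Theorem~\ref{main}, Case~1'').

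However, your alternative route misses the paper's actual argument. You announce the Euler class in your first sentence but then never use it; instead you fall back on a vague ``rank discrepancy in the abelianization'' that is neither the paper's method nor clearly a proof. The paper's reasoning is sharper and more direct: suppose $\text{CRS}(\phi(T_s))=\{s',s''\}$. Then $\phi(A)$ lies in the centralizer of $\phi(T_s)$, hence fixes both $s'$ and $s''$; since in addition $F\circ\phi|_A=\mathrm{id}_A$, the puncture $*$ is trapped in the annulus between $s'$ and $s''$, and one deduces $\phi(A)\subset \pi_1(UTS_p)\times\pi_1(UTS_q)$. But that is exactly a splitting of the central extension~\eqref{euler}. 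The paper then computes
\[
\dim H^2(UTS_p\times UTS_q;\mathbb{Q})=\dim H^2(S_p\times S_q;\mathbb{Q})=\dim H^2(A;\mathbb{Q}),
\]
which shows the Euler class of~\eqref{euler} is nonzero, so no such splitting exists---contradiction.

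So the missing idea in your second paragraph is precisely the one you named and then dropped: the obstruction is the Euler class of~\eqref{euler}, not a rank count. Your ``rank discrepancy'' sketch does not obviously lead anywhere, and the appeal to Step~2 of Lemma~\ref{haha} is misplaced (that step bounds the rank of a centralizer from above to rule out pseudo-Anosov pieces, which is a different mechanism). If you want the Euler-class proof, the two things to make precise are (i) why the puncture must lie in the annular region between $s'$ and $s''$, forcing $\phi(A)$ into the product of unit-tangent-bundle groups, and (ii) the $H^2$ computation showing the extension class is nonzero.
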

\begin{proof}
We have already proved this result in the proof of Theorem \ref{main}, Case 1. Here we give another proof using the Euler class. By Lemma \ref{haha}, we have that $\phi(T_s)=T_{s'}^nT_{s''}^{1-n}$. We only need to prove that CRS$(\phi(T_s))$ only contains one curve.

Suppose the opposite that CRS$(\phi(T_s))$ contains two curves $s'$ and $s''$ such that they are isotopic to $s$. Then $\phi(A)$ is in the centralizer of of $\phi(T_s)$. The centralizer $\phi(T_{s})$ is the subgroup of ${\cal I}_{g,1}$ that fixes $s'$ and $s''$. Since $\phi(A)$ also satisfies the fact that it maps to $A$ after forgetting $*$. We know that $\phi(A)\subset \pi_1(UTS_p)\times \pi_1(UTS_q)$. Since by computation, 
\[
\text{dim}H^2(UTS_p\times UTS_q;\mathbb{Q})=\text{dim}H^2(S_p\times S_q;\mathbb{Q})=\text{dim}H^2(A;\mathbb{Q}),
\]
we know that the Euler class of \eqref{euler} is nonzero. Therefore \eqref{euler} does not split which proves the claim.
\end{proof}

Since $T_s$ commutes with each element of $G$, we have that $T_{s'}$ commutes with each element of $\phi(G)$. Therefore, $\phi(G)$ is a subgroup of the centralizer of $T_{s'}$. The centralizer $C_{{\cal I}_{g,1}}(T_{s'})$ of $T_{s'}$ is the subgroup of ${\cal I}_{g,1}$ that fixes $s'$. Since the two components of $S_g-s'$ are not homeomorphic, any element in $C_{{\cal I}_{g,1}}(T_{s'})$ has to fix the two components. Therefore $C_{{\cal I}_{g,1}}(T_{s'})$ satisfies the following exact sequence
\[
1\to \mathbb{Z}\xrightarrow{} {\cal I}_p^1\times {\cal I}_{q,1}^1\to  C_{{\cal I}_{g,1}}(T_{s'})\to 1.
\]
Therefore we have a section of $F:{\cal I}_{q,1}^1\to {\cal I}_q^1$ which maps $T_{s}$ to $T_{s'}$. This section gives a section of $F_{q,2,1}$ in the following commutative diagram.
\begin{equation}
\xymatrix{
1\ar[r]  &  \mathbb{Z}\ar[r]\ar[d] &  {\cal I}_{q,1}^1\ar[r]\ar[d]^F  &     {\cal PI}_{q,2}   \ar[r] \ar[d]^{F_{q,2,1}} &1\\
1\ar[r]  &  \mathbb{Z}\ar[r]&  {\cal I}_{q}^1\ar[r]  &     {\cal I}_{q,1}   \ar[r]  &1.}
\end{equation}
However, we already prove that $F_{q,2,1}$ does not have a section in Corollary \ref{cor}, this implies that $F$ does not have a section. The statement follows.
\end{proof}

\small
\bibliography{citing}{}
\vspace{5mm}
\hfill \break
Dept. of Mathematics, University of Chicago

E-mail: chenlei@math.uchicago.edu

\end{document}